\newtheorem{prop}{Proposition}[section]
\newtheorem{theo}[prop]{Theorem}
\newtheorem{coro}[prop]{Corollary}
\theoremstyle{definition}
\newtheorem{lemm}[prop]{Lemma}
\newtheorem{defi}[prop]{Definition}
\theoremstyle{remark}
\newtheorem{conj}[prop]{Conjecture}
\newtheorem{rema}[prop]{Remark}
\newtheorem{exam}[prop]{Example}
\newtheorem{obse}[prop]{Observation}
\newcommand{\bA}{\mathbb A}
\newcommand{\bC}{\mathbb C}
\newcommand{\bG}{\mathbb G}
\newcommand{\bN}{\mathbb N}
\newcommand{\bP}{\mathbb P}
\newcommand{\bQ}{\mathbb Q}
\newcommand{\bR}{\mathbb R}
\newcommand{\bZ}{\mathbb Z}
\newcommand{\cB}{\mathcal B}
\newcommand{\cC}{\mathcal C}
\newcommand{\cD}{\mathcal D}
\newcommand{\cI}{\mathcal I}
\newcommand{\cL}{\mathcal L}
\newcommand{\cM}{\mathcal M}
\newcommand{\cO}{\mathcal O}
\newcommand{\cP}{\mathcal P}
\newcommand{\cR}{\mathcal R}
\newcommand{\cT}{\mathcal T}
\newcommand{\cU}{\mathcal U}
\newcommand{\cX}{\mathcal X}
\newcommand{\cY}{\mathcal Y}
\newcommand{\cZ}{\mathcal Z}
\newcommand{\ocM}{\overline{\mathcal M}}
\newcommand{\hFb}{{\widehat F}_b}
\newcommand{\hOb}{{\widehat \cO}_{B,b}}
\newcommand{\fm}{{\mathfrak m}}
\newcommand{\ra}{\rightarrow}
\newcommand{\lra}{\longrightarrow}
\newcommand{\Hilb}{\mathcal{H}ilb}
\newcommand{\Hom}{\mathcal{H}om}
\newcommand{\Ext}{\mathcal{E}xt}
\newcommand{\Spec}{\mathrm{Spec}}
\newcommand{\Proj}{\mathrm{Proj}}
\newcommand{\Sect}{\mathrm{Sect}}
\newcommand{\oSect}{\overline{\mathrm{Sect}}}
\newcommand{\Pic}{\mathrm{Pic}}
\newcommand{\Bl}{\mathrm{Bl}}
\newcommand{\ev}{\mathrm{ev}}
\begin{document}

\title{Weak approximation and rationally connected varieties over function 
fields of curves}

\author{Brendan Hassett
\thanks{Supported in part by NSF Grants 0134259, 0554491, and 0901645.}}
\maketitle

\section*{Introduction}

This paper surveys recent work on weak approximation for varieties over complex function fields.  
It also touches on the geometric theory of rationally connected varieties and stable maps.  

Weak approximation has been studied extensively in the context of number theory, quadratic forms, and linear algebraic
groups.  Early examples include work of Kneser \cite{Kneser2} and Harder \cite{Harder} on linear algebraic groups over various fields.
In the 1980's, attention shifted to rational surfaces over number fields and cohomological obstructions to weak approximation.
Significant results were obtained by Colliot-Th\'el\`ene, Sansuc, 
Swinnerton-Dyer, Skorobogatov, Salberger, Harari, and others.
We refer the reader to \cite{Harari} for an excellent survey of the state of
this area in 2002.  

With the development of the theory of rationally connected varieties,
weak approximation over function fields of {\em complex} curves became a focus of research.  Already in 1992, Koll\'ar-Miyaoka-Mori
\cite{KMM} showed that rationally connected varieties over such fields enjoy remarkable approximation properties, assuming
they admit rational points.  In 2001,
Graber-Harris-Starr \cite{GHS} showed these rational points exist, which opened the door to a more systematic study of their properties.  

This paper is organized as follows:  Section~\ref{sect:elements} reviews the basic definitions, presenting them in a form useful for
our purposes.  In Section~\ref{sect:general}, we present results valid for general rationally connected varieties, as well as 
key constructions and deformation-theoretic tools.   We turn to special classes of varieties in Section~\ref{sect:cases}, including
rational surfaces and hypersurfaces with mild singularities at places of bad reduction.  Section~\ref{sect:RSC} addresses
a large class of varieties where weak approximation is known, the rationally simply connected varieties \cite{dJS}.  
We raise some questions for further study in Section~\ref{sect:questions}.  
The Appendix presents basic facts on stable maps used throughout the volume.  

\

\noindent {\bf Acknowledgments:}  
This survey is based on lectures given in May 2008 
at the Session Etats de la Recherche ``Vari\'et\'es rationnellement connexes : aspects g\'eom\'etriques et arithm\'etiques'',
sponsored by the Soci\'et\'e math\'ematique de France, the
Institut de Recherche Math\'ematique Avanc\'ee (IRMA), the Universit\'e Louis Pasteur Strasbourg, and the Centre National de la Recherche Scientifique (CNRS).  
I am grateful to the organizers, Jean-Louis Colliot-Th\'el\`ene, Olivier Debarre, and Andreas H\"oring, for their support and encouragement for me to 
write up these notes.  

My personal research contributions in this area are in collaboration with Yuri Tschinkel, who made helpful suggestions on the content and
presentation.  Jason Starr made very important contributions to the arguments presented in Section~\ref{sect:RSC} linking weak approximation
to rational simple connectedness.  I benefited from comments from the other speakers at the school, Laurent Bonavero and Olivier Wittenberg,
and from the participants, especially Amanda Knecht and Chenyang Xu.  I am grateful to Colliot-Th\'el\`ene for his constructive comments on drafts 
of this paper.

\section{Elements of weak approximation}
\label{sect:elements}

\paragraph{Notation} 

Throughout, a {\em variety} over a field $L$ designates a separated geometrically
integral scheme of finite type over $L$;  its {\em generic point} is the unique
point corresponding to its function field.  A {\em general point} of a variety is a closed point 
chosen from the complement of an unspecified Zariski closed proper subset.  

Let $k$ be an algebraically closed field of characteristic zero and $B$ a smooth
projective curve over $k$, with function field $F=k(B)$.  

Let $X$ be a smooth projective variety over $F$.
A {\em model} of $X$ is a flat proper morphism
$\pi:\cX \ra B$
with generic fiber $X$.  Usually, $\cX$ is a scheme projective over $B$, but
there are situations where we should take it to be an algebraic space proper over $B$.
For each $b\in B$, let $\cX_b=\pi^{-1}(b)$ 
denote the fiber over $b$.  Once we have chosen a concrete embedding
$X \subset \bP^N$, the properness of the  Hilbert scheme yields a natural
model.  The model is {\em regular} if the total space
$\cX$ is nonsingular;  this can always be achieved via resolution of singularities.  

\paragraph{Elementary properties of sections}
Recall that a {\em section} of $\pi$ is a morphism
$s:B\ra \cX$
such that $\pi \circ s:B \ra B$ is the identity.
By the valuative criterion of properness, we have
$$\left\{  \text{ sections }s:B \ra \cX 
				\text{ of } \pi 
\right\}
	\Leftrightarrow
\left\{
\text{ rational points } 
x\in X(F)	
\right\}.
$$
Assume $\cX$ is regular and write
\begin{equation} \label{eq:smooth}
\begin{array}{rcl}
\cX^{sm}&=&\{x \in \cX : \pi \text{ is smooth at } x \} \\
               &=& \{x \in \cX: \cX_b \text{ is smooth at }x, b=\pi(x) \} \subset \cX.
\end{array}
\end{equation}
Then each section $s:B\ra \cX$ is necessarily contained in $\cX^{sm}$.
The proof of this assertion is basic calculus:  Since $\pi \circ s$ is
the identity the derivative $d(\pi \circ s)$ is as well,
which means that $d\pi$ is surjective and 
$$\dim \mathrm{ker}(d\pi_{s(b)})=\dim \cX_b$$
for each $b\in B$.  Thus $\cX_b$ is smooth at $s(b)$.  

\paragraph{Formulating weak approximation}
For each $b\in B$, let $\hOb$ denote the completion of the local ring 
$\cO_{B,b}$ at the maximal ideal $\fm_{B,b}$, and $\hFb$ the completion of $F=k(B)$
at $b$, i.e., the quotient field of $\hOb$.  
Consider the {\em ad\`eles} over $F$
$$\bA_F={\prod_{b \in B}}' \hFb,$$
i.e., the restricted product over all the places of $B$.
The restricted product means that all but finitely many of the factors are in 
$\hOb$.   There are {\em two} natural topologies one could consider:
The ordinary product topology and the restricted product
topology, with basis consisting of products of open sets
$\prod_{b \in B}  U_b$, where $U_b=\hOb$ for all but finitely many $b$.
Using the natural inclusions $F\subset \hFb$, we may regard $F\subset \bA_F$.

Given a variety $X$ over $F$, the adelic points $X(\bA_F)$ inherit both topologies
from $\bA_F$.    

\begin{defi}
A variety $X$ over $F$ satisfies {\em weak approximation} (resp.~{\em strong 
approximation}) if 
$$X(F) \subset X(\bA_F)$$
is dense in the ordinary product (resp.~restricted product) topology.    
\end{defi}

For {\em proper} varieties $X$, the distinction between weak and strong
approximation is irrelevant.  This will be clear after we 
analyze the definitions in this case.       

\paragraph{Unwinding the definition}

Assume $X$ is smooth and proper and fix a model $\pi:\cX \ra B$.  Both topologies on   
$X(\bA_F)$ have the following basis:  Consider data
$$J=(N;b_1,\ldots,b_r;\hat{s}_1,\ldots,\hat{s}_r),$$
consisting of a nonnegative integer $N$, distinct places $b_1,\ldots,b_r\in B$,
and points $\hat{s}_i \in X(\widehat{F}_{b_i})$ for $i=1,\ldots,r$.
Since $\pi: \cX\ra B$ is proper, we may interpret $\hat{s}_i$ as a section of
the restriction
$$  \pi|\widehat{B}_{b_i}:\cX \times_B \widehat{B}_{b_i} \ra \widehat{B}_{b_i},
		\quad \widehat{B}_{b_i}=\Spec( \widehat{\cO}_{B,b_i}),$$
to the completion of $B$ at $b_i$.  Since we can freely
clear denominators, insisting that the points are integral at almost all
places is not a restriction.  Thus our basic open sets are
$$U_J=\{ t \in X(\bA_F): t\equiv \hat{s}_i  \pmod{\fm^{N+1}_{B,b_i}}\},$$
i.e., sections with Taylor series at $b_1,\ldots,b_r$ prescribed to 
order $N$.   

Now suppose in addition that $\pi:\cX \ra B$ is a {\em regular} model,
so that sections automatically factor through $\cX^{sm} \subset \cX$
(see (\ref{eq:smooth}) above).  Note that $\hat{s}_i(b_i)\in \cX_{b_i}$ is a smooth
point, by the same calculus argument we used to show sections factor
through $\cX^{sm}$.  Conversely,
Hensel's Lemma (or the $\fm$-adic version of Newton's method, cf.~\cite[p.14]{Se})
implies that each section $\hat{s}^N_{i}$ of 
$$\cX^{sm} \times_B \Spec(\cO_{B,b_i}/\fm^{N+1}_{B,b_i})
\ra \Spec(\cO_{B,b_i}/\fm^{N+1}_{B,b_i})
$$
can be extended to a section $\hat{s}_i$ of $\pi|\widehat{B}_{b_i}.$
Thus we can recast our data as a collection of {\em jet data}
\begin{equation} \label{eq:jetdata}
J=(N;b_1,\ldots,b_r;\hat{s}^N_1,\ldots,\hat{s}^N_r),
\end{equation}
where the $\hat{s}^N_i$ are $N$-jets of sections of $\cX^{sm} \ra B$
at $b_i$.  
\begin{figure}
\begin{center}
\includegraphics{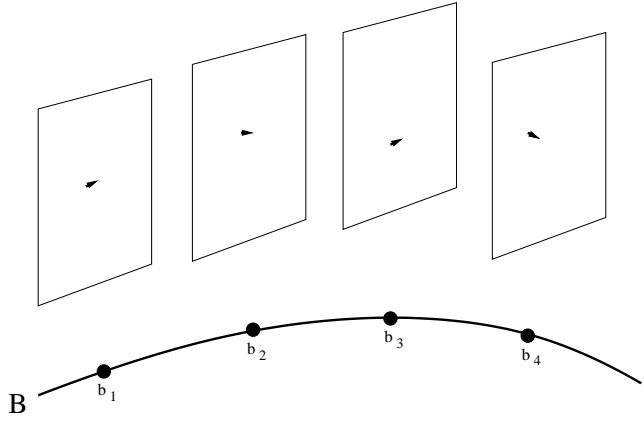}
\end{center}
\caption{Jet data for sections}
\end{figure}

To summarize:
\begin{obse} \label{obse1}
Let $X$ be a smooth proper variety over $F$.  To establish weak approximation for $X$,
it suffices to show, for {\em one} regular model $\cX \ra B$, that for each
collection of jet data (\ref{eq:jetdata}) there exists a section $s:B\ra \cX$
with $s\equiv \hat{s}^N_i \pmod{\fm^{N+1}_{B,b_i}}.$
\end{obse}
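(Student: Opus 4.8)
The plan is to show that the basic open sets $U_J$ of both topologies on $X(\bA_F)$ can be described purely in terms of jet data, so that density of $X(F)$ reduces to the statement in the observation. First I would recall that, since $X$ is proper, the valuative criterion identifies an adelic point $t\in X(\bA_F)$ with a collection of sections of $\pi$ over the various completions $\widehat{B}_b=\Spec(\widehat{\cO}_{B,b})$, integral at all but finitely many places; this is exactly the discussion preceding the observation. Both topologies then have as a basis the sets $U_J$ attached to data $J=(N;b_1,\ldots,b_r;\hat s_1,\ldots,\hat s_r)$ with $\hat s_i\in X(\widehat F_{b_i})$, consisting of those $t$ agreeing with $\hat s_i$ modulo $\fm_{B,b_i}^{N+1}$. (For a proper $X$ the restricted-product condition is automatic, which is why weak and strong approximation coincide here.)

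Next I would pass from sections over the completions to genuine $N$-jets of sections of $\cX^{sm}\ra B$. Fix a regular model $\cX\ra B$. By the calculus argument already given for why sections factor through $\cX^{sm}$, each $\hat s_i(b_i)$ is a smooth point of $\cX_{b_i}$, and more generally the $N$-jet $\hat s_i^N$ of $\hat s_i$ is an $N$-jet of a section landing in $\cX^{sm}$. Conversely, given any $N$-jet $\hat s_i^N$ of a section of $\cX^{sm}\ra B$ at $b_i$ — that is, a $\widehat{\cO}_{B,b_i}/\fm^{N+1}_{B,b_i}$-point of $\cX^{sm}$ lying over the canonical one — smoothness of $\pi$ along $\cX^{sm}$ lets us invoke Hensel's lemma (equivalently the $\fm$-adic Newton's method, cf.\ \cite[p.~14]{Se}) to lift it to an honest section $\hat s_i$ of $\pi|\widehat{B}_{b_i}$. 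Hence the correspondence $\hat s_i\mapsto\hat s_i^N$ between $\widehat F_{b_i}$-points modulo $\fm^{N+1}$ and $N$-jets of sections of $\cX^{sm}$ is a bijection, and each $U_J$ is precisely $\{t : t\equiv\hat s_i^N\ (\mathrm{mod}\ \fm_{B,b_i}^{N+1})\}$ for the jet data $(N;b_1,\ldots,b_r;\hat s_1^N,\ldots,\hat s_r^N)$ as in \eqref{eq:jetdata}.

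Finally I would conclude: an $F$-point $x\in X(F)$ corresponds, again by properness, to a section $s:B\ra\cX$, and $x$ lies in $U_J$ if and only if $s\equiv\hat s_i^N\pmod{\fm_{B,b_i}^{N+1}}$ for all $i$. Therefore $X(F)$ is dense in $X(\bA_F)$ if and only if every basic open set $U_J$ meets $X(F)$, which is exactly the existence, for the chosen regular model, of a section $s$ matching prescribed jet data. Since the $U_J$ form a basis for \emph{both} topologies, this single condition establishes weak (hence also strong) approximation, and it suffices to verify it for \emph{one} regular model because weak approximation is an intrinsic property of $X$, independent of the model. The one point demanding genuine care — the ``main obstacle'' in an otherwise bookkeeping argument — is the Hensel/Newton lifting step: one must check that the relevant formal smoothness hypothesis is genuinely the smoothness of $\pi$ along $\cX^{sm}$ (not merely regularity of the total space), so that an approximate jet-section can be upgraded to a true section over the completed local ring; the regularity of $\cX$ enters only to guarantee, via \eqref{eq:smooth}, that sections and their jets land in $\cX^{sm}$ in the first place.
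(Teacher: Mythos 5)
Your argument is correct and follows the paper's own route: unwinding the adelic topology into the basic open sets $U_J$, using properness to interpret adelic points as sections over the completions $\widehat{B}_{b_i}$, and invoking the calculus argument plus Hensel's lemma (smoothness of $\pi$ along $\cX^{sm}$) to identify $\widehat{F}_{b_i}$-points modulo $\fm^{N+1}_{B,b_i}$ with $N$-jets of sections of $\cX^{sm}\ra B$, so that density of $X(F)$ is exactly the jet-matching condition for the chosen regular model. Your closing remark about where smoothness (rather than mere regularity of $\cX$) is genuinely used matches the paper's treatment as well.
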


\paragraph{Iterated blow-ups arising from formal sections}
Let $X$ be a smooth proper variety over $F$ of dimension $d$,
with regular model $\pi:\cX \ra B$.  
Fix a point $b\in B$ and a formal section
$$\hat{s}:\widehat{B}_b \ra \cX\times_B \widehat{B}_b.$$
This is equivalent to a point of $X(\hFb)$.

We define a sequence of new models
$$\cX^N \ra \cX^{N-1} \ra \cdots \ra \cX^1 \ra \cX^0=\cX$$
inductively as follows:  
\begin{enumerate}
\item{Set $\cX^1=\Bl_{\hat{s}(b)}(\cX^0)$ and let 
$$\hat{t}^1:\widehat{B}_b \ra \cX^1 \times_B \widehat{B}_b$$
denote the induced section, which exists by applying the
valuative criterion to $\cX^1 \ra B$.}
\item{Set $\cX^2=\Bl_{\hat{t}^1(b)}(\cX^1)$ and 
$$\hat{t}^2:\widehat{B}_b \ra \cX^2 \times_B \widehat{B}_b$$
the induced section.}
\item[ ]{\hspace{2.5in} \vdots}
\item[N.]{Set $\cX^N=\Bl_{\hat{t}^{N-1}(b)}(\cX^{N-1})$ and 
$$\hat{t}^N:\widehat{B}_b \ra \cX^N \times_B \widehat{B}_b$$
the induced section.}
\end{enumerate}
In other words, we blow up successively along the proper transforms
of the formal section over $b$.  
Note that $\cX^1$ depends only on  $\hat{s}(b)=\hat{s}^{0}=\hat{s}\pmod{\fm_{B,b}}$,
and in general, 
$\cX^N$ depends only on the jet datum $\hat{s}^{N-1}=\hat{s}\pmod{\fm^N_{B,b}}$.  
The fiber over $b$ can be expressed
\begin{equation} \label{eq:iterfiber}
\cX^N_b=\Bl_{\hat{s}(b)}(\cX_b) \cup \Bl_{\hat{t}^{1}(b)}(\bP^d) \cup \cdots \cup \Bl_{\hat{t}^{N-1}(b)}(\bP^d) \cup \bP^d,
\end{equation}
i.e., as a chain with the proper transform of $\cX_b$ at one end, the $N$th exceptional divisor at the other end,
and blow-ups of the intermediate exceptional divisors in between.  
Observe that $\hat{t}^N(b) \in \bP^d$, the $N$th exceptional divisor.  
\begin{figure}
\begin{center}
\includegraphics{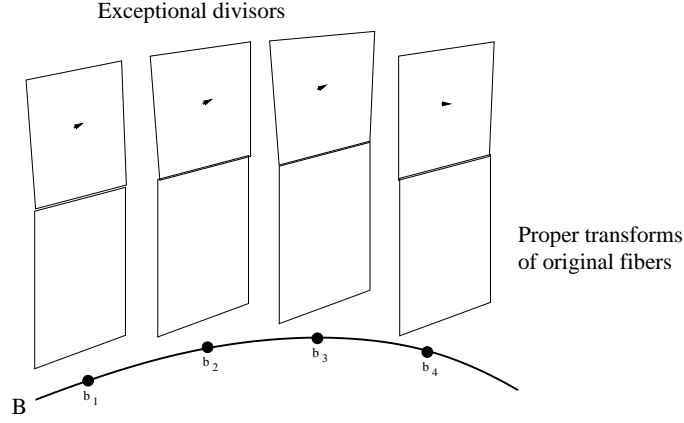}
\end{center}
\caption{Iterated blow-up construction}
\end{figure}

\begin{defi} \label{defi:iterate}
Let $X$ be a smooth proper variety over $F$ with regular model $\pi:\cX \ra B$, and
$$
J=(N;b_1,\ldots,b_r;\hat{s}^N_1,\ldots,\hat{s}^N_r)
$$
a collection of jet data as in (\ref{eq:jetdata}).  The {\em iterated blow-up}
associated to $J$ 
$$\beta^J:\cX^J \ra \cX$$
is obtained by blowing up $N$ times along over each point $b_j$.  
For each $i=1,\ldots,r$, let $x^J_i \in \cX^J_{b_i}$ denote the evaluation of the 
formal section defining the blow-up over $b_i$ at the closed point.
\end{defi}

\begin{prop} \label{prop:iterate}
Retaining the notation of Definition~\ref{defi:iterate}, we have a natural bijection
$$\left\{ \begin{array}{c} \text{sections }s:B\ra \cX \\
	   \text{with jet data } $J$ \end{array} \right\} 
\Leftrightarrow
\left\{	\begin{array}{c} \text{sections }s^J:B \ra \cX^J \\
	   \text{with } s^J(b_i)=x^J_i \end{array}  \right\}.$$
The direction $\Rightarrow$ is induced by taking the proper transform of $s$ in $\cX_J$;  the
direction $\Leftarrow$ is given by setting $s=\beta^J \circ s^J$.  
\end{prop}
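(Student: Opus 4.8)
The plan is to reduce the statement to a local analysis near each $b_i$ and ultimately to the effect of a single blow-up. Since the iterated blow-ups over distinct points $b_1,\dots,b_r$ are supported over disjoint fibers, they commute and may be carried out independently; so it suffices to prove, for a single point $b=b_i$ with formal section $\hat{s}=\hat{s}_i$, that a section $s\colon B\ra\cX$ satisfies $s\equiv\hat{s}\pmod{\fm^{N+1}_{B,b}}$ if and only if its proper transform in $\cX^N$ passes through $\hat{t}^N(b)=x^J_i$, and that the two assignments in the statement are genuinely inverse to one another.

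First I would make the two maps precise. A section $s\colon B\ra\cX$ factors through $\cX^{sm}$ by the calculus argument recalled above; since $\beta^J$ is an isomorphism over $B^\circ:=B\setminus\{b_1,\dots,b_r\}$, the restriction $s|_{B^\circ}$ lifts uniquely to $\cX^J$, and as $\cX^J\ra B$ is proper and each $\cO_{B,b_i}$ is a discrete valuation ring, the valuative criterion extends this lift to a morphism $s^J\colon B\ra\cX^J$, which is a section by separatedness. Conversely, from a section $s^J\colon B\ra\cX^J$ we obtain the section $s:=\beta^J\circ s^J$ of $\cX\ra B$; since $s^J(B)$ dominates $B$ it lies in no exceptional divisor, so $s^J$ is itself the proper transform of $\beta^J\circ s^J$. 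A short check then shows that $s\mapsto s^J$ and $s^J\mapsto\beta^J\circ s^J$ are mutually inverse bijections between all sections of $\cX$ and all sections of $\cX^J$, and it remains only to match, at each $b_i$, the jet condition on $s$ with the incidence condition $s^J(b_i)=x^J_i$.

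For the local statement, fix $b=b_i$ and put $p=\hat{s}(b)$. Because $\hat{s}$ is a (formal) section, $\cX_b$ is smooth at $p$ by the same calculus argument, hence $\pi$ is smooth at $p$; since $\cX$ is regular of dimension $d+1$ I may choose coordinates so that $\widehat{\cO}_{\cX,p}\cong k[[t,x_1,\dots,x_d]]$ with $t$ pulling back a uniformizer of $B$ at $b$, and $\hat{s}$ given by $x_j=g_j(t)$, $g_j\in t\,k[[t]]$. After the formal change of variables $y_j=x_j-g_j(t)$ the section $\hat{s}$ becomes $\{y=0\}$. The proper transform of any section through $p$ lands in the chart of the blow-up on which $y_1/t,\dots,y_d/t$ are regular, because the section maps isomorphically to $B$; iterating, the corresponding chart of $\cX^N$ has coordinates $t,z_1,\dots,z_d$ with $z_j=y_j/t^N$, the point $\hat{t}^N(b)$ is its origin $\{t=z=0\}$, and $\hat{t}^N(b)$ lies in no other chart over $E_N$. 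For a section $s\colon x_j=f_j(t)$, set $h_j=f_j-g_j$. Its proper transform meets this chart precisely when $t^N\mid h_j$ for all $j$, in which case, at $b$, the function $z_j$ equals the coefficient of $t^N$ in $h_j$; hence the proper transform passes through $\hat{t}^N(b)$ exactly when $t^{N+1}\mid h_j$ for all $j$, that is, when $s\equiv\hat{s}\pmod{\fm^{N+1}_{B,b}}$. (If instead $t^m$, $m<N$, is the largest power of $t$ dividing every $h_j$, then the proper transform already meets $E_m$ at a point other than $\hat{t}^m(b)$, so all later blow-ups are local isomorphisms near it and it misses $\hat{t}^N(b)\in E_N$; this is the contrapositive.) Combining the local equivalences at all the $b_i$ with the bijection of the previous paragraph yields the Proposition.

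I expect the bulk of the work to be bookkeeping rather than any single hard step: verifying that the proper transform exists, is unique, is a section, and is compatible with completion at the $b_i$ (so that the formal coordinate computation is legitimate), that the two maps are inverse, and that the distinguished chart is correctly identified and contains $\hat{t}^N(b)$. The one genuinely geometric input is the standard fact that blowing up a point of a section ``differentiates'' it, trading a congruence modulo $t^k$ for one modulo $t^{k+1}$; everything else is formal.
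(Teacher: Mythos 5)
Your proof is correct, and it follows exactly the route the paper intends: the paper states Proposition~\ref{prop:iterate} without further argument, simply naming the two maps (proper transform and composition with $\beta^J$), and your write-up supplies the natural verification — reduction to one place $b_i$, the mutually-inverse character of the two assignments, and the local computation showing each blow-up trades a congruence modulo $\fm^{k}_{B,b}$ for one modulo $\fm^{k+1}_{B,b}$, so that $s\equiv\hat{s}\pmod{\fm^{N+1}_{B,b}}$ exactly when the proper transform hits $x^J_i$. No gaps; this is the same approach, just with the details the paper leaves implicit made explicit.
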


In other words, we can interpret weak approximation on $\cX$ in terms of finding 
sections with prescribed values over the various iterated blow-ups of $\cX$.  
To summarize:
\begin{obse} \label{obse2}
Let $X$ be a smooth proper variety over $F$.  To establish weak approximation for $X$,
it suffices to show, for {\em each} regular model $\cX \ra B$, distinct places
$b_1,\ldots,b_r \in B$, and smooth points $x_i \in \cX^{sm}_{b_i},i=1,\ldots,r$,  
there exists a section $s:B\ra \cX$ with $s(b_i)=x_i$ for each $i$.
\end{obse}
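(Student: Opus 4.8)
The plan is to reduce the statement to Observation~\ref{obse1} by absorbing the prescribed jet data into an iterated blow-up and then quoting Proposition~\ref{prop:iterate}. Assume the interpolation property in the statement holds for \emph{every} regular model. Fix once and for all a regular model $\pi : \cX \ra B$ of $X$; one exists by resolution of singularities. By Observation~\ref{obse1} it suffices to produce, for an arbitrary collection of jet data
$$J = (N; b_1, \ldots, b_r; \hat{s}^N_1, \ldots, \hat{s}^N_r)$$
on this fixed $\cX$, a section $s : B \ra \cX$ with $s \equiv \hat{s}^N_i \pmod{\fm^{N+1}_{B,b_i}}$ for every $i$.

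Given such $J$, I would form the iterated blow-up $\beta^J : \cX^J \ra \cX$ of Definition~\ref{defi:iterate}, together with its distinguished points $x^J_i \in \cX^J_{b_i}$, and check that $\cX^J \ra B$ is again a regular model of $X$. Properness is immediate, since $\beta^J$ is projective over $\cX$ and $\pi$ is proper; flatness holds because $\cX^J$ is integral and dominates the regular curve $B$; and since every blow-up center lies over a closed point of $B$, the generic fibre is unchanged and still equals $X$. For regularity of the total space, observe that each model in the tower $\cX^N \ra \cdots \ra \cX^0 = \cX$ is obtained from the previous one by blowing up a single closed point, and the blow-up of a regular scheme at a closed point is again regular; so $\cX^J$ is regular by induction, starting from the regular scheme $\cX$.

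Next I would verify that each $x^J_i$ is a \emph{smooth} point of the fibre $\cX^J_{b_i}$, so that the hypothesis may legitimately be invoked for $\cX^J$. This is where the elementary calculus argument underlying (\ref{eq:smooth}) reappears: at every stage the formal section being blown up is a section of the current model over $B$, so $d\pi$ is surjective along it and its tangent direction at $b_i$ is not vertical; hence the proper transform meets the next exceptional $\bP^d$ at a point lying off the subvariety along which that $\bP^d$ is glued to the proper transform of the preceding component in the chain (\ref{eq:iterfiber}). Inductively, $x^J_i$ lies in the smooth locus of $\cX^J_{b_i}$.

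With both points in hand, I would apply the assumed property to the regular model $\cX^J \ra B$, the places $b_1, \ldots, b_r$, and the smooth points $x^J_1, \ldots, x^J_r$, obtaining a section $s^J : B \ra \cX^J$ with $s^J(b_i) = x^J_i$ for each $i$. By Proposition~\ref{prop:iterate} the composite $s := \beta^J \circ s^J$ is then a section of $\cX \ra B$ with jet data $J$, which is exactly what Observation~\ref{obse1} demands; hence $X$ satisfies weak approximation. It is essential here that the property be assumed for every regular model, since the auxiliary models $\cX^J$ are produced by the argument rather than chosen in advance. The argument is little more than bookkeeping on top of Observation~\ref{obse1} and Proposition~\ref{prop:iterate}; the one step with genuine geometric content --- and the one I would be most careful about --- is the verification that the iterated proper transforms keep the points $x^J_i$ in the smooth loci of the fibres, which rests on the transversality of sections to $\pi$ and on the explicit description (\ref{eq:iterfiber}) of the fibres of the iterated blow-up.
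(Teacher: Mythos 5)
Your proposal is correct and follows essentially the same route as the paper, which presents Observation~\ref{obse2} as a direct consequence of Observation~\ref{obse1} combined with the iterated blow-up construction and Proposition~\ref{prop:iterate}. The details you supply --- regularity of $\cX^J$ as a model and the transversality argument showing $x^J_i$ lands in the smooth locus of the fibre --- are exactly the points the paper leaves implicit, and your verification of them is sound.
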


\paragraph{Weak approximation and birational models}
Weak approximation is a birational property (see \cite[\S 2.1]{Kneser2}):
\begin{theo} \label{theo:birat}
Let $X_1$ and $X_2$ be smooth varieties over $F$.  Assume they are 
birational over $F$.  Then $X_1$ satisfies weak approximation if and only
if $X_2$ satisfies weak approximation.   
\end{theo}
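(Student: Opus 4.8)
The plan is to reduce the statement to Observation~\ref{obse2} and then exploit the fact that a birational map between smooth varieties is an isomorphism away from closed subsets of codimension at least one. First, I would fix regular models $\pi_i:\cX_i\ra B$ of $X_i$ for $i=1,2$, and choose a common "resolved" birational correspondence: let $\phi:X_1\dashrightarrow X_2$ be the given birational map over $F$, spread it out to a birational map of models, and resolve it to obtain a smooth projective variety $\cZ$ over $B$ with proper birational $B$-morphisms $p_i:\cZ\ra\cX_i$ (via resolution of singularities and elimination of indeterminacy, using $\mathrm{char}\,k=0$). After further blow-ups I may assume $\cZ$ is a regular model of a smooth projective $F$-variety $Z$ birational to both $X_1$ and $X_2$. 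It then suffices to show: if $Z$ satisfies weak approximation then so does $X_1$, and conversely; i.e. to compare $\cZ$ and $\cX_1$ when $p=p_1:\cZ\ra\cX_1$ is a proper birational $B$-morphism of regular models.

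For the direction "$\cZ$ satisfies weak approximation $\Rightarrow$ $\cX_1$ does," I would invoke Observation~\ref{obse2}: given distinct places $b_j$ and smooth points $x_j\in\cX_{1,b_j}^{sm}$, I must produce a section of $\cX_1$ through the $x_j$. The key point is that $p$ restricted to the open locus where it is an isomorphism covers all but finitely many fibers, and on the finitely many bad fibers I can still move the prescribed point: since $\cX_{1,b_j}$ is smooth at $x_j$ and $p$ is birational, there is a unique point $\tilde x_j\in\cZ_{b_j}$ lying over $x_j$ provided $x_j$ is not in the (codimension $\ge 2$ in $\cX_1$) indeterminacy image of $p^{-1}$ — and I can arrange this by first replacing $x_j$ by a nearby smooth point, then noting that weak approximation for $\cX_1$ only needs to be checked at a dense set of such data, OR, more robustly, by passing to the iterated blow-up of $\cX_1$ at the $x_j$ (Definition~\ref{defi:iterate}, Proposition~\ref{prop:iterate}) and resolving again so that the $x_j$ become honest points on a model dominated by a blow-up of $\cZ$. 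Then a section of $\cZ$ (equivalently its further blow-up) through the corresponding points pushes forward under $p$ to a section of $\cX_1$ with the desired values, because $p\circ(\text{section})$ is automatically a section and sends $b_j$ to $x_j$ by construction. The reverse implication is symmetric.

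The cleanest bookkeeping, which I would actually adopt, is this: by Proposition~\ref{prop:iterate} weak approximation for $\cX_i$ is equivalent to the statement that for every iterated blow-up $\cX_i^J$ the prescribed point $x_i^J$ lies on a section; and any such iterated blow-up, being itself a regular model birational to $Z$, is dominated after further blow-up by some iterated blow-up of $\cZ$. So I only ever need the single lemma: \emph{if $p:\cZ\ra\cW$ is a proper birational morphism of regular models and $\cZ$ satisfies weak approximation, then for any point $w\in\cW^{sm}_{b}$ lying in the locus over which $p$ is an isomorphism, $w$ lies on a section of $\cW$} — and then observe that this suffices because every point of $\cW^{sm}$ can be connected, within its fiber, to the good locus, OR simply restrict attention to jet data in the dense good locus, which still suffices to prove density.

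The main obstacle is precisely this last subtlety: a birational map need not be defined at the prescribed smooth point $x_j$ on a bad fiber, so the naive "transport the point across $\phi$" fails, and one must genuinely use that (i) the bad locus has codimension $\ge 2$ in the regular total space so it misses the generic point of every fiber, hence misses "most" smooth points, and (ii) weak approximation is a density statement, so it is enough to verify the lifting for jet data whose support avoids these codimension-$\ge 2$ loci — equivalently, one thickens by the iterated blow-up construction so that the awkward point is replaced by a point on an exceptional $\bP^d$, where one has room to maneuver. Handling this interplay between "codimension $\ge 2$ in the total space" and "the section must hit a specified point of a fiber" carefully — and checking that the dense subset of admissible jet data is still dense in $X_i(\bA_F)$ — is where the real work lies; the rest is formal manipulation with the valuative criterion and Observation~\ref{obse2}.
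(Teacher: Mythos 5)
Your reduction to models has a genuine gap precisely at the point you flag as ``where the real work lies,'' and neither of the two fixes you offer actually closes it. The problem is that in the adelic (product) topology the basic open sets prescribe the $N$-jet, and in particular the $0$-jet: an adelic point whose component at $b_j$ reduces to a point $x_j$ lying in the bad locus $T=p(\mathrm{Exc}(p))\subset\cX_1$ has an entire nonempty open neighborhood of adelic points all of whose centers at $b_j$ are that same $x_j$. Since $T$, though of codimension $\ge 2$ in the total space, typically meets $\cX^{sm}_{1,b}$ in many fibers (e.g.\ when the birational map blows down a horizontal center, its image meets almost every fiber in smooth points), the set of jet data ``whose support avoids the bad loci'' is open but \emph{not} dense, so the claim that ``weak approximation only needs to be checked at a dense set of such data'' fails for the data you propose to discard. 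The iterated blow-up escape route fares no better: if the formal section you are trying to approximate lies entirely inside the closure of the indeterminacy locus, then every center you blow up, and every new prescribed point $x^J_j$ on the exceptional $\bP^d$, again lies on the proper transform of that locus, so no finite amount of blowing up gives you the claimed ``room to maneuver.''

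The missing idea is to perturb the \emph{formal section} rather than its center or a bounded-order jet: given the prescribed $N$-jet, one can choose a formal section in $X_1(\widehat{F}_{b_j})$ with that exact $N$-jet whose generic point avoids any prescribed proper closed $F$-subvariety of $X_1$ (standard $\fm$-adic approximation plus Zariski density of such formal sections). Only then does the strict transform on the other model exist and land in the smooth locus, and approximating it there to order $\ge N$ pushes forward to solve the original problem. This is exactly how the paper argues, and in a much lighter framework: it never introduces models or resolves the birational correspondence, but takes a common open dense $U\subset X_1,X_2$ and shows $U(\bA_F)$ is dense in each $X_i(\bA_F)$ by moving each $\widehat{F}_{b_j}$-point off $X_i\setminus U$ without changing its $N$-jet; weak approximation then transfers through $U$. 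Your model-theoretic transport can be repaired by inserting this perturbation step, but as written the argument breaks down exactly on the adelic points supported in the exceptional image locus.
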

\begin{coro} \label{coro:rational}
Let $X$ be a smooth variety, rational over $F$.  Then $X$
satisfies weak approximation.  
\end{coro}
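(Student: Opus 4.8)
The plan is to deduce the statement from the birational invariance of weak approximation, Theorem~\ref{theo:birat}, reducing to the case $X=\bP^n_F$, and then to establish weak approximation for projective space by classical interpolation on the curve $B$. Since $X$ is rational over $F$, it is birational over $F$ to $\bP^n_F$ for some $n$, so by Theorem~\ref{theo:birat} it suffices to prove weak approximation for $\bP^n_F$. For this I would use the model $\pi\colon\cX=\bP^n_B\ra B$, which is smooth over $k$, hence a regular model, with $\cX^{sm}=\cX$. By Observation~\ref{obse1} it is then enough to show that every collection of jet data $J=(N;b_1,\ldots,b_r;\hat s^N_1,\ldots,\hat s^N_r)$ for $\bP^n_B\ra B$ is realized by a global section $s\colon B\ra\bP^n_B$, i.e.\ by a point of $\bP^n(F)$ with the prescribed Taylor expansions to order $N$ at $b_1,\ldots,b_r$.

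Next I would reduce to the affine line. As $k$ is algebraically closed, each residue field $k(b_i)$ equals $k$, so the reductions $\hat s^N_i\bmod\fm_{B,b_i}$ form a finite subset of $\bP^n(k)$; choose a hyperplane $H\subset\bP^n$ defined over $k$ and disjoint from it. Each formal section $\hat s_i\colon\widehat B_{b_i}\ra\bP^n_{\widehat B_{b_i}}$ extending $\hat s^N_i$ then factors through the affine chart $\bP^n\setminus H\cong\bA^n$: its set-theoretic preimage of $H$ is closed in the local scheme $\widehat B_{b_i}$ and avoids the closed point, hence is empty. So each $\hat s^N_i$ is an $N$-jet of a section of $\bA^n_B\ra B$, given by $n$ elements of $\cO_{B,b_i}/\fm^{N+1}_{B,b_i}$, and it suffices to produce, for each coordinate $j=1,\ldots,n$, a function $f_j\in F$ whose image in each $\cO_{B,b_i}/\fm^{N+1}_{B,b_i}$ is the prescribed truncated Taylor series; the tuple $(f_1,\ldots,f_n)\in\bA^n(F)\subset\bP^n(F)$ is then the section we want.

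For the interpolation step, fix an auxiliary point $b_0\in B\setminus\{b_1,\ldots,b_r\}$ and consider, for $m\gg 0$, the evaluation map
$$\mathrm{ev}\colon H^0\bigl(B,\cO_B(mb_0)\bigr)\lra\bigoplus_{i=1}^{r}\cO_{B,b_i}/\fm^{N+1}_{B,b_i},$$
whose kernel is $H^0\bigl(B,\cO_B(mb_0-(N+1){\textstyle\sum_i}b_i)\bigr)$. For $m>2g-2+r(N+1)$, with $g$ the genus of $B$, Riemann--Roch computes both spaces, and the difference of their dimensions is exactly $r(N+1)=\dim_k\bigl(\bigoplus_{i=1}^{r}\cO_{B,b_i}/\fm^{N+1}_{B,b_i}\bigr)$; hence $\mathrm{ev}$ is surjective, and any preimage of the prescribed jet vector yields the desired $f_j\in F\subset\widehat F_{b_i}$, completing the reduction. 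I do not expect a serious obstacle in this chain: the genuine content has been packaged into Theorem~\ref{theo:birat}, where transporting sections across an $F$-birational map forces one to deal with the indeterminacy locus and the fibers of bad reduction, and that is where the real work lies. Granting that, the base case $\bP^n_F$ is routine, the only ingredient being surjectivity of $\mathrm{ev}$ above, and the one step deserving care is the passage to a common affine chart, which uses that $k$ is algebraically closed so that the finitely many relevant reductions are $k$-rational.
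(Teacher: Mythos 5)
Your proposal is correct and follows essentially the route the paper intends: the corollary is deduced from the birational invariance of Theorem~\ref{theo:birat} by reducing to $\bP^n_F$, whose weak approximation the paper treats as classical (standard $\fm$-adic approximation, cf.\ Serre). Your explicit base case --- passing to an affine chart missing the finitely many $k$-rational reduction points and then interpolating jets coordinate-wise via Riemann--Roch surjectivity of the evaluation map --- is a sound, self-contained way to supply what the paper leaves implicit.
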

\begin{proof}
Since $X_1$ and $X_2$ are birational, they share a common Zariski open dense subset
$$X_1 \supset U \subset X_2.$$
We claim that $U(\bA_F)$ is dense in $X_1(\bA_F)$ and $X_2(\bA_F)$
in the ordinary product topology, or equivalently, 
$$X_i(\bA_F) \setminus U(\bA_F), \quad i=1,2,$$
has trivial interior.  Given a point 
$x \in  X_1(\bA_F)\setminus U(\bA_F)$, we claim there exists a point
$u \in U(\bA_F)$ that approximates $x_1$ (in the direct product
topology) arbitrarily closely.  Precisely, consider distinct
places $b_1,\ldots,b_r \in B$ and the corresponding projections
$$x_{b_j} \in X(\widehat{F}_{b_j}).$$
It suffices to show that for each $N\ge 0$, there exist 
$$u_{b_j} \in U(\widehat{F}_{b_j})$$
such that $u_{b_j}\equiv x_{b_j} \pmod{\fm_{B,b_j}^{N+1}}.$

We take $u_{b_j}$ to be an $N$th order approximation of $x_{b_j}$,
chosen in such a way that it does {\em not} lie on $X_1\setminus U$ (regarded
as a subscheme with the reduced induced scheme structure).  
Standard $\fm$-adic approximation techniques (cf.~\cite[p. 14]{Se}) allow us to exhibit such points.  
\end{proof}

\begin{rema}
In geometric terms, nonempty open subsets of $X_1(\widehat{F}_{b_j})$ are Zariski dense in $X$.
We can produce points intersecting $X_1\setminus U$ over $b_j$ with arbitrarily high order.  
\end{rema}

\paragraph{Places of good reduction and construction of models}
\begin{defi}
Let $X$ be a smooth proper variety over $F$ with regular model $\cX\ra B$.
The model has {\em good reduction} at $b\in B$ if $\cX_b$
is smooth;  otherwise, it has {\em bad reduction} at $b$.  
A place $b\in B$ is of {\em good reduction} if there exists a regular
model $\pi:\cX \ra B$ with good reduction at $b$, and of {\em bad
reduction} otherwise.  
\end{defi}

It is clear that each model has only finitely many places of bad reduction.  
A gluing argument gives:
\begin{prop} \cite[Prop. 7]{HT06}
Let $X$ be a smooth proper variety over $F$.  Then there exists an algebraic space
$\cX \ra B$ that is a regular model for $X$ and is smooth over each place of good reduction.
\end{prop}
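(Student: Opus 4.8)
The plan is to construct the desired regular model by taking an arbitrary regular model, modifying it locally near each place of good reduction to achieve smoothness there, and gluing the local modifications back together. The key point is that "gluing" in the category of algebraic spaces is more flexible than in the category of schemes, which is why the statement only claims an algebraic space.

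First I would fix any regular model $\pi:\cX\ra B$, which exists by resolution of singularities as noted in the excerpt. Let $b_1,\dots,b_m \in B$ be the (finitely many) places of bad reduction of this particular model. By the definition of a place of good reduction, for each $b_j$ there is \emph{some} regular model $\pi_j:\cX_j \ra B$ that has good reduction at $b_j$. The models $\cX$ and $\cX_j$ agree over the generic point of $B$ (both have generic fiber $X$), so they agree over some Zariski open $V_j \subset B$ containing $b_j$; after shrinking I can assume $V_j$ contains no other $b_i$ and no place of bad reduction of $\cX_j$, so $\pi_j$ is smooth over all of $V_j$. Now I would glue: over $B \setminus \{b_1,\dots,b_m\}$ use $\cX$, over each $V_j$ use $\cX_j$, and on the overlaps $V_j \setminus \{b_j\}$ the two are canonically identified since both restrict to a model over the same open subset of $B$ (identifying two regular models that agree generically over a smooth curve is where one uses properness and normality to get a canonical isomorphism over the locus where this makes sense). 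The resulting object $\cX'$ is covered by schemes with étale (indeed open immersion) transition data, hence is an algebraic space; it is proper and flat over $B$ since this can be checked locally on $B$; it is regular since each piece is; and it has good reduction at every $b_j$ by construction, while retaining good reduction at all the original good places of $\cX$.

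The main obstacle is verifying that the birational identification of $\cX$ and $\cX_j$ over $V_j \setminus \{b_j\}$ is actually an \emph{isomorphism} of $B$-schemes there, not merely a birational map — one needs that two regular proper flat models of $X$ over a smooth curve $B$ which agree over the generic point are canonically isomorphic over the common locus where both are smooth over $B$ (this follows because over such a locus a section of one extends uniquely by the valuative criterion, applied in families, so the identity on generic fibers spreads out to a morphism, and symmetrically an inverse). A secondary subtlety, and the reason the output is an algebraic space rather than a scheme, is that even when each piece is projective over its open subset of $B$, there is no reason the glued object admits a global projective — or even global quasi-projective — structure; but étale-local representability by schemes is exactly the definition of an algebraic space, so this is automatic.

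This is precisely the argument of \cite[Prop.~7]{HT06}; the gluing is elementary once the local comparison isomorphisms are in hand, and the role of algebraic spaces is simply to make "glue finitely many schemes along open subschemes of the base $B$" legitimate without a projectivity hypothesis.
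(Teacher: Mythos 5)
Your overall strategy — fix one regular model, and at each of its finitely many bad places that is nevertheless a place of good reduction of $X$, splice in a local model with good reduction there — is exactly the gluing argument the paper alludes to. But the lemma you use to identify the pieces on the overlaps is false as stated: two regular proper flat models with the same generic fiber are \emph{not} canonically isomorphic over the locus where both are smooth over $B$. Take $\cX=\bP^1\times B$ and let $\cX_j$ be the elementary transformation centered at a point $x$ of a fiber $\cX_{b'}$ (blow up $x$, blow down the proper transform of the fiber). Both models are smooth over all of $B$, yet the birational map extending the identity on the generic fiber is undefined at $x$ and contracts a curve in the fiber over $b'$, so it is not an isomorphism over $b'$. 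The valuative criterion controls $B$-valued points, not the indeterminacy locus of a birational map between total spaces, so ``applied in families'' it does not produce the morphism you want. The repair is cheap, and it is what the gluing really rests on: the birational map $\cX \dashrightarrow \cX_j$ extending the identity of $X$ is an isomorphism over the generic point of $B$, hence over a dense open $W\subset B$ whose complement is a finite set of closed points; shrink $V_j$ so that $V_j\setminus\{b_j\}\subset W$ and the $V_j$ are pairwise disjoint, then glue. Note also that you should invoke an auxiliary model $\cX_j$ only at those bad places of your initial model which are places of good reduction of $X$; at genuinely bad places no such $\cX_j$ exists, and none is needed.

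A second point: your explanation of why the output is an algebraic space is off. Gluing schemes along open immersions (here, over open subsets of $B$) yields a scheme, whether or not anything is projective; so if all the local pieces were schemes, the construction would produce a scheme. The algebraic-space flexibility is needed because the local models $\cX_j$ with good reduction at $b_j$ may themselves exist only as algebraic spaces: this is precisely the content of the pencil-of-cubic-surfaces example in Section~\ref{sect:elements}, where the only models smooth over the degenerate places are small resolutions that are not locally projective over $B$ and hence not schemes. So the non-scheme phenomenon enters through the local pieces, not through the gluing itself.
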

Such models are called {\em good models}.  

\paragraph{An example: pencils of cubic surfaces}
We really do need to consider algebraic spaces to get good models:

Consider the $\bP^{19}$ parametrizing all cubic surfaces in $\bP^3$,
with discriminant hypersurface $D\subset \bP^{19}$.  The fundamental group of
$U=\bP^{19}-D$ acts on the cohomology lattice via the full monodromy
representation.
The monodromy group is the Weyl group $W(E_6)$, acting on the
primitive cohomology lattice of the cubic surface via the standard
representation generated by reflections in the simple roots \cite{Har79}.  
(These simple roots are differences of disjoint lines on the cubic surface.)   
There is a normal simple subgroup $H\subset W(E_6)$ of index two
and order $25920$, corresponding
to the elements acting on the lattice with determinant one.

Take a general line in $\bP^{19}$, i.e., one intersecting $D$ transversally.
This family may be represented by the bihomogeneous equation
$$\cX=\{sF(w,x,y,z)+tG(w,x,y,z)=0\} \subset \bP^1 \times \bP^3, \quad       \deg(F)=\deg(G)=3.$$
The resulting cubic surface fibration 
$\pi:\cX\ra \bP^1$ has the
following properties:
\begin{enumerate}
\item $\cX$
is nonsingular and each singular fiber $\cX_b$
has a single ordinary double point, i.e., an isolated singularity
with smooth projective tangent cone; \'etale locally we have
$$\cX_b\sim \{x_0x_1+x_2^2=t=0\} \subset \{x_0x_1+x_2^2=t \} \sim \cX.$$
\item
the local monodromy near each singular fiber
is a reflection in a simple root, i.e., the vanishing cycle
        for the ordinary double point;
\item $\pi$ has $32$ singular fibers;
\item the monodromy representation is the full group $W(E_6)$.
\end{enumerate}
The first assertion is just a restatement of the generality assumption;
the second is the Picard-Lefschetz monodromy formula.
The third property follows from a straightforward computation with
Euler characteristics:  $\cX$ is the blow-up of $\bP^3$
along the complete intersection $F=G=0$.  The last property follows from
the Lefschetz hyperplane theorem for fundamental groups, applied to the
open variety $U$ \cite[p. 150]{GM}.

Let $\{p_1,....,p_{32}\}$ be the intersection of $D$ with our general line,
i.e., the discriminant of $\pi$.  Let $B$ denote the double
cover of $\bP^1$ at these points and  
$$\pi':\cX':=\cX\times_{\bP^1} B \ra B$$
the pullback to $B$.  The local monodromy near the singular fibers of $\pi'$ 
is now trivial;  at the singular point of each such fiber,
$\cX'$ has an ordinary threefold double point, \'etale locally
isomorphic to $\{x_0x_1+x_2^2=u^2\}$.  

The monodromy representation is an index two subgroup of $W(E_6)$,
which does not contain the Picard-Lefschetz reflections
associated to the degenerate fibers;  this must be
the simple group $H$.  The restriction
of the standard reflection representation of $W(E_6)$ to $H$ is
still irreducible.  There are no primitive classes fixed under $H$ and
$\Pic(\cX'/B)$ is still generated by the anti-canonical class.

The family $\pi'$ admits a simultaneous resolution 
$$\begin{array}{rcl}
\cY& \ra & \cX' \\
  &   & \downarrow \\
  &    & B,
\end{array}
$$
as the local monodromy near each singular fibers is trivial \cite{Bries}.
Concretely, one takes a small resolution of each of the $32$
ordinary singularities of $\cX'$.  This entails replacing each singularity with
a $\bP^1$;  \'etale locally, small resolutions of $\{x_0x_1+x_2^2=u^2 \}$
can be obtained by blowing up either of the planes
 $\{x_0=x_2-u=0\}$ or $\{x_1=x_2-u=0\}$.  The irreducibility of the monodromy
representation means $\cY$ is not even locally projective over $B$.  It follows
that $\cY$ exists as an algebraic space but not as a scheme.

\section{Results for general rationally connected varieties}
\label{sect:general}
\paragraph{Basic properties of rationally connected varieties}
There are numerous basic references for rationally connected 
varieties, e.g., Koll\'ar's book \cite[IV.3]{Kolbook}, Debarre's book
\cite[ch.~4]{Debbook}, and Bonavero's contribution to this volume.  
And we should mention the original papers \cite{KMM} and \cite{Cam}.  

One general point on terminology:  A {\em rational curve} on a variety
$Y$ is the image of a non-constant morphism $f:\bP^1 \ra Y$.  In particular,
rational curves are always proper, even when $Y$ is not proper.

\begin{defi}
Let $Y$ be a smooth variety.  It is {\em rationally connected}
(resp.~{\em rationally chain connected}) if there exists a 
proper flat morphism $Z\ra W$ over a variety $W$, whose fibers
are irreducible (resp.~connected) curves of genus zero, and
a morphism
$\phi:Z\ra Y$
such that the induced morphism
$$\phi^2: Z\times_W Z \ra Y\times Y$$
is dominant.
\end{defi} 
Roughly, $Y$ is rationally connected if
there exists a rational curve $f:\bP^1 \ra Y$ through 
the generic pair of points $(y_1,y_2) \in Y\times Y$.

\begin{defi}
Let $Y$ be a smooth variety.  A non-constant morphism
$f:\bP^1 \ra Y$ is {\em free} (resp.~{\em very free}) if
we have a decomposition
$$f^*T_Y \simeq \oplus_{i=1}^{\dim(Y)} \cO_{\bP^1}(a_i)$$
with each $a_i\ge 0$ (resp.~$a_i >0$).
\end{defi}

Trivially, every rationally connected variety is rationally
chain connected and each variety admitting a very free curve
admits a free curve.  Slightly less trivial is that every
variety admitting a very free curve is in fact rationally
connected.  For smooth varieties $Y$ over fields of characteristic
zero, we can prove converses to these statements.
Generic smoothness 
implies that the general rational curve in a family 
dominating $Y \times Y$ is very free.     
And smooth rationally chain connected
varieties are in fact rationally connected.  

\paragraph{Elements of deformation theory}
We recall a basic result we shall use frequently.

Let $Y$ be a smooth variety.  
Consider a map $\{f:(C,x_1,\ldots,x_r)\ra Y\}$ defined on
a nodal proper connected curve
with distinct marked smooth points, with $f$ non-constant and 
unramified at the nodes and the marked points.  
In particular, $f$ does not contract any irreducible components of $C$,
so $\{f:C \ra Y \}$ and $\{f:(C,x_1,\ldots,x_r) \ra Y \}$ are both stable maps.  
(See the Appendix for background on stable maps.)  

Let $N_f$ denote the {\em normal sheaf} of $f$, defined (see \cite[p. 61]{GHS})
as the unique non-vanishing cohomology group of
$$\bR \Hom_{\cO_C} (f^*\Omega^1_Y \stackrel{df^{\dag}}{\ra} \Omega^1_C,\cO_C),$$
which fits into an exact sequence
$$0 \ra \Hom_{\cO_C}(\mathrm{ker}(df^{\dag}),\cO_C) \ra N_f
\ra \Ext^1_{\cO_C}(\mathrm{coker}(df^{\dag}),\cO_C) \ra 0.$$
When $C$ is smooth, $\Omega^1_C$ is invertible and we obtain 
$$0 \ra T_C \ra f^*T_Y \ra N_f \ra 0.$$
Generally, the tangent space and obstruction space to the moduli stack of
stable maps at
$\{f:C\ra Y \}$ are $\Gamma(C,N_f)$ and $H^1(C,N_f)$ respectively.  
Sometimes, we will abuse notation and write $N_{C/Y}$ for $N_f$.  

Let $\cI_{x_1,\ldots,x_r}$ denote the 
ideal sheaf for the marked points.  
Consider the substack of the stable map space consisting of
$\{f':(C',x'_1,\ldots,x'_r)\ra Y \}$ such
that $f'(x'_i)=f(x_i)$ for each $i$.  Its tangent space at
$\{f:(C,x_1,\ldots,x_r) \ra Y\}$ equals
$\Gamma(C,N_f\otimes \cI_{x_1,\ldots,x_r});$
the obstruction space is
$H^1(C,N_f\otimes \cI_{x_1,\ldots,x_r}).$

\begin{lemm} \cite[\S 2]{GHS} \label{lemm:smooth}
Suppose $N_f\otimes \cI_{x_1,\ldots,x_r}$ has no higher cohomology and
is generated at the nodes by global sections.   Then $f:C \ra Y$
admits a smoothing $f'$ with $f'(x_i)=f(x_i)$ for each $i$.  
\end{lemm}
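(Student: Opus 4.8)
The plan is to realize the smoothing by an explicit deformation-theoretic argument, following the strategy of \cite{GHS}. First I would set up the relevant moduli space: let $\cM$ be the stack of stable maps $\{f':(C',x'_1,\ldots,x'_r)\ra Y\}$ with the property that $f'(x'_i)$ is a fixed point (namely $f(x_i)$) for each $i$; this is the ``substack of the stable map space'' discussed just before the statement, and its tangent and obstruction spaces at our given point $\{f:(C,x_1,\ldots,x_r)\ra Y\}$ are $\Gamma(C,N_f\otimes\cI_{x_1,\ldots,x_r})$ and $H^1(C,N_f\otimes\cI_{x_1,\ldots,x_r})$ respectively. Since the latter vanishes by hypothesis, $\cM$ is smooth at $[f]$; in particular the universal deformation is unobstructed and the locus of maps with \emph{smooth} domain forms an open substack whose closure we must show contains $[f]$.

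The second and main step is to show that the deformations actually \emph{smooth the nodes}. For each node $p$ of $C$, the normal sheaf of $f$ has a local smoothing parameter, and the obstruction to smoothing $p$ is the image of the first-order deformation in a one-dimensional piece of $\Ext^1$ (or equivalently, it is governed by the cokernel of the restriction map from $\Gamma(C,N_f\otimes\cI_{x_1,\ldots,x_r})$ to the fiber of $N_f$ at $p$). The hypothesis that $N_f\otimes\cI_{x_1,\ldots,x_r}$ is globally generated at the nodes means precisely that this restriction map is surjective onto each such fiber, so one can choose a global section that is nonzero in the prescribed direction at every node simultaneously (the marked points are smooth and distinct from the nodes, so $\cI_{x_1,\ldots,x_r}$ does not interfere with the values at nodes). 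Exponentiating this section gives a one-parameter family over a disk (or over $\Spec k[[t]]$) whose general member has all nodes of $C$ smoothed out while keeping $f'(x'_i)=f(x_i)$ fixed — this is where the vanishing of $H^1$ is used again, to integrate the first-order smoothing to an honest family.

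The third step is bookkeeping: the general fiber $C'$ of this family is a smooth connected curve (connectedness is preserved under flat deformation of a connected nodal curve), and since $f$ was non-constant and unramified at the marked points, the nearby map $f'$ remains non-constant with $f'(x'_i)=f(x_i)$, as required. One should check that the chosen section can be taken to vanish nowhere problematic — i.e.\ that smoothing all nodes at once is compatible, which follows because global generation at the nodes is an open condition and the nodes are finitely many points.

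I expect the main obstacle to be the second step: making precise that global generation of $N_f\otimes\cI_{x_1,\ldots,x_r}$ \emph{at the nodes} (rather than everywhere) is exactly the condition needed to find a first-order deformation smoothing every node, and that this first-order deformation integrates. The subtlety is that $N_f$ is only a sheaf, not locally free, when $C$ is singular, so one must work with the $\Ext$-description $0\ra\Hom_{\cO_C}(\mathrm{ker}(df^{\dag}),\cO_C)\ra N_f\ra\Ext^1_{\cO_C}(\mathrm{coker}(df^{\dag}),\cO_C)\ra 0$ and identify the $\Ext^1$ term with the direct sum of the ``smoothing line'' at each node; the hypotheses are tailored so that $\Gamma(C,N_f\otimes\cI)$ surjects onto this sum and $H^1(C,N_f\otimes\cI)=0$ removes all obstructions to lifting. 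Once that identification is in hand, the argument is a standard application of the smoothness criterion for the moduli stack.
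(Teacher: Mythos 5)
Your proposal is correct and takes essentially the same route the paper relies on: the paper does not prove the lemma itself but cites \cite[\S 2]{GHS}, and the discussion immediately preceding the statement sets up exactly the framework you use --- tangent space $\Gamma(C,N_f\otimes \cI_{x_1,\ldots,x_r})$, obstruction space $H^1(C,N_f\otimes \cI_{x_1,\ldots,x_r})$, and the $\Ext^1$ term of the normal-sheaf sequence, supported at the nodes, as the smoothing parameters. Your identification of generation at the nodes with surjectivity of global sections onto the node-smoothing lines, together with vanishing of $H^1$ to conclude unobstructedness and integrate the first-order smoothing while fixing $f(x_i)$, is the standard Graber--Harris--Starr argument.
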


Lemma~\ref{lemm:smooth} is typically used to show that stable
maps admit smoothings provided they have enough free or very
free curves among their irreducible components.  It
can be applied to establish:
\begin{prop} \cite[IV.3.9.4]{Kolbook}  \label{prop:thrupts}
Let $Y$ be a smooth rationally connected 
variety over $k$, an algebraically closed
field of characteristic zero.    

There exists a unique maximal nonempty open subset $Y^{\circ} \subset Y$
such that, for any finite 
collection of points $y_1,\ldots,y_m \in Y^{\circ}$, there
exists a morphism $f:\bP^1 \ra Y^{\circ}$ 
with image containing the points.  Any rational curve meeting
$Y^{\circ}$ is contained in $Y^{\circ}$.  

If $Y$ is proper $Y^{\circ}=Y$.
\end{prop}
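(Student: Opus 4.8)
The plan is to build the open set $Y^\circ$ as the locus of points lying on some very free rational curve, then to use Lemma~\ref{lemm:smooth} repeatedly to show that finitely many such points can be joined by a single rational curve staying inside $Y^\circ$. First I would recall that, since $Y$ is rationally connected over a field of characteristic zero, generic smoothness produces a very free rational curve $f_0:\bP^1 \ra Y$; moreover, for a very free curve the bundle $f_0^*T_Y$ is ample, so $f_0$ deforms (with any finite number of points of $\bP^1$ fixed in $Y$) to cover an open set. Define $Y^\circ$ to be the union of the images of all very free morphisms $\bP^1 \ra Y$; one checks this is open (deformations of a very free curve through a fixed point sweep out a neighborhood of that point) and nonempty. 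The claim ``any rational curve meeting $Y^\circ$ is contained in $Y^\circ$'' I would get by attaching such a rational curve to a very free curve through the intersection point and smoothing via Lemma~\ref{lemm:smooth}: the resulting very free curve passes through a general point of the attached curve, and since being very free is an open condition in families, a general deformation is very free, so every point of the attached curve is forced into $Y^\circ$.

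For the main assertion, given $y_1,\ldots,y_m \in Y^\circ$, I would proceed by induction on $m$. Choose a very free curve $C_i \cong \bP^1$ through each $y_i$ (possible by definition of $Y^\circ$), and since very free curves through a fixed point move enough to pass through a further general point, I can arrange the $C_i$ to form a connected nodal chain $C = C_1 \cup \cdots \cup C_m$, say attaching $C_{i+1}$ to $C_i$ at a general point of each; the attaching nodes are general, hence I may take $f$ unramified there. Mark the points $x_i \in C_i$ mapping to $y_i$. I then need the normal sheaf hypothesis of Lemma~\ref{lemm:smooth}: because each component $C_i$ carries a very free curve, $N_f$ restricted to $C_i$ is a sum of line bundles of positive degree, and even after twisting down by one point per component (the marked point $x_i$, and one further point at each node) and by the ideal of the marked points, the degrees stay nonnegative, so $N_f \otimes \cI_{x_1,\ldots,x_m}$ has vanishing $H^1$ on each component; a Mayer--Vietoris / exact-sequence argument over the nodes then gives vanishing $H^1$ on all of $C$ and global generation at the nodes. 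Lemma~\ref{lemm:smooth} yields a smoothing $f':\bP^1 \ra Y$ with $f'(x_i) = y_i$ for all $i$; a general such $f'$ is very free (openness), hence factors through $Y^\circ$ by the previous paragraph, and has image containing all the $y_i$, which is exactly what is required.

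For maximality and uniqueness: if $Y^{\circ\prime}$ is any open set with the stated through-points property, then in particular each of its points lies on a rational curve contained in $Y^{\circ\prime}$ passing also through a general point of $Y$, and by a dimension-count / generic-smoothness argument the general such curve through a general point is very free, so $Y^{\circ\prime}$ meets $Y^\circ$; then the ``rational curves meeting $Y^\circ$ stay in $Y^\circ$'' property forces $Y^{\circ\prime} \subseteq Y^\circ$. Hence $Y^\circ$ is the unique maximal such open set. Finally, if $Y$ is proper, then every very free curve is automatically a proper curve in a proper variety and there is no boundary to avoid; combining with the fact that through any two general points there is a very free curve (from rational connectedness plus generic smoothness), one sees that $Y^\circ$ contains a dense open set and, being closed under meeting rational curves while every point of $Y$ lies on a rational curve through a general (hence $Y^\circ$) point, must equal all of $Y$.

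\medskip
\noindent\textbf{Expected main obstacle.} The delicate point is verifying the hypotheses of Lemma~\ref{lemm:smooth} for the nodal chain: one must choose the gluing points and the number of marked/auxiliary points on each component carefully so that the twisted normal sheaf has no $H^1$ and is globally generated at every node simultaneously, and then propagate these sheaf-cohomology facts from the components to the reducible curve through the node sequences. This bookkeeping — rather than any single conceptual difficulty — is where the real work lies; everything else is standard deformation theory and openness of very-freeness.
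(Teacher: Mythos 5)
Note first that the paper does not prove this proposition at all: it is quoted from Koll\'ar \cite[IV.3.9.4]{Kolbook}, and the only argument in the text is for the jet refinement (Proposition~\ref{prop:thrujets}). So your proposal has to be judged on its own. Its skeleton (define $Y^{\circ}$ as the union of very free curves, check openness via smoothness of evaluation, join points in $Y^{\circ}$ by smoothing a chain of very free curves with one marked point per component, get maximality from the containment property) is the standard one and is essentially sound; the chain bookkeeping you flag as the ``main obstacle'' is indeed routine, since the normal sheaf of an immersed very free curve is ample and the nodes only add positivity.

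The genuine gap is in your proof of the key assertion that any rational curve $R$ meeting $Y^{\circ}$ lies in $Y^{\circ}$. Attaching a \emph{single} very free curve $D$ to $R$ at the intersection point and invoking Lemma~\ref{lemm:smooth} does not work: the hypotheses of that lemma concern the whole comb, and the restriction of its normal sheaf to the handle $R$ is just $N_R$ modified at one node, which can still have arbitrarily negative summands and nonvanishing $H^1$ (nothing forces an arbitrary rational curve in a rationally connected $Y$ to be free). Moreover, even if a smoothing existed, it would only show that a \emph{general} point of $R$ lies on a very free curve; since $Y^{\circ}$ is open but not closed, ``general points of $R$ lie in $Y^{\circ}$'' does not imply ``every point of $R$ lies in $Y^{\circ}$'', and the whole content of the claim is at the special points. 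The correct argument fixes an arbitrary $q\in R$, observes that $R\cap Y^{\circ}$ is a dense open subset of $R$ (so very free teeth can be attached at \emph{many general} points of $R$ with general tangent directions), and then applies the comb-smoothing technique (Koll\'ar II.7.9--7.10, or the argument behind \cite[\S 2]{GHS} and Proposition~\ref{prop:nicesection}) to smooth a suitable subcomb while keeping $q$ fixed, producing a very free curve through $q$. A related soft spot is the proper case: your assertion that every point of a proper $Y$ lies on a rational curve through a general point is essentially the statement to be proved; the standard route degenerates very free curves through a general point into chains (using properness of the space of stable maps of bounded class) to reach an arbitrary point, and then propagates membership in $Y^{\circ}$ along the chain using the containment property just discussed.
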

Currently, no example is known where $Y^{\circ} \subsetneq Y$.
Actually, we shall require a slight variation on this result:
\begin{prop} \cite[2.2]{DebBour}  \label{prop:thrujets}
Given any collection of points $y_1,\ldots,y_m \in Y^{\circ}$ and non-trivial
tangent vectors $v_j \in T_{y_j}Y^{\circ}$, there exists a 
morphism $f:\bP^1 \ra Y^{\circ}$ and points 
$p_1,\ldots,p_m \in \bP^1$ such that $f(p_j)=x_j$ and 
$\mathrm{image}(df_{p_j})=\mathrm{span}(v_j)$ for $j=1,\ldots,m$.
\end{prop}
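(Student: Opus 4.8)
The plan is to deduce Proposition~\ref{prop:thrujets} from Proposition~\ref{prop:thrupts} by a blow-up trick, exactly the device used throughout the earlier part of the paper: prescribing a one-jet at a point of $Y^\circ$ should become prescribing a point on the exceptional divisor of a single blow-up. More precisely, for each $j$ let $\beta_j : \widetilde Y_j \to Y$ be the blow-up of $Y$ at $y_j$, with exceptional divisor $E_j \cong \bP(T_{y_j}Y) \cong \bP^{d-1}$, and let $\widetilde Y \to Y$ be the composite blow-up at all of $y_1,\dots,y_m$ simultaneously (the points are distinct, so these commute). The tangent line $\mathrm{span}(v_j) \subset T_{y_j}Y$ determines a point $e_j \in E_j$. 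A morphism $g : \bP^1 \to \widetilde Y$ with $g(q_j) = e_j$ for suitable $q_j \in \bP^1$, whose image is not contained in any $E_j$, pushes forward to $f = \beta \circ g : \bP^1 \to Y$ passing through the $y_j$ with prescribed tangent direction $\mathrm{span}(v_j)$ there; this is the same bijection between sections and sections-on-a-blow-up recorded in Proposition~\ref{prop:iterate}, transported from the arithmetic-surface setting to a single variety.

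First I would observe that $\widetilde Y$ is again smooth, projective whenever $Y$ is (or quasi-projective in general), and — this is the crucial point — \emph{rationally connected}: rational connectedness is a birational invariant among smooth proper varieties, and blowing up points does not destroy it. Hence Proposition~\ref{prop:thrupts} applies to $\widetilde Y$, furnishing its own maximal open subset $\widetilde Y^{\,\circ} = \widetilde Y$ (when $Y$ is proper) through any finite set of points of which a rational curve can be routed. Next I would check that each marked point $e_j \in E_j$ lies in $\widetilde Y^{\,\circ}$: since every rational curve meeting $Y^\circ$ lies in $Y^\circ$, and $E_j$ is covered by lines whose images under $\beta$ pass through $y_j \in Y^\circ$, the exceptional divisors sit inside $\widetilde Y^{\,\circ}$; this is where the hypothesis $y_j \in Y^\circ$ (rather than arbitrary $y_j \in Y$) and the ``any rational curve meeting $Y^\circ$ is contained in $Y^\circ$'' clause of Proposition~\ref{prop:thrupts} get used. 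Apply Proposition~\ref{prop:thrupts} to the points $e_1,\dots,e_m \in \widetilde Y^{\,\circ}$ to get $g : \bP^1 \to \widetilde Y^{\,\circ}$ with $e_j \in \mathrm{image}(g)$, say $g(q_j) = e_j$.

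The main obstacle is the final bookkeeping: I need $f = \beta \circ g$ to be non-constant with $\mathrm{image}(df_{q_j}) = \mathrm{span}(v_j)$, and for this I must know that $g$ is not tangent to $E_j$ at $q_j$ and that $g(\bP^1) \not\subset E_j$. The point $e_j$ was a general point of the finite collection, so after a small perturbation within the family of rational curves through the $e_j$ — using that these curves sweep out $\widetilde Y^{\,\circ}$ and hence meet $E_j$ transversally at a general point — one arranges $g$ to be an immersion near $q_j$ meeting $E_j$ transversally there; then $\beta$ carries the tangent direction of $g$ at $q_j$ to the line in $T_{y_j}Y$ represented by $e_j$, which is $\mathrm{span}(v_j)$ by construction. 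One should also discard the degenerate case $g(\bP^1) \subseteq E_j$ (impossible once $g$ is transverse to $E_j$ at some point) and note $f$ is non-constant because $g$ is and $\beta$ is birational. I would remark that this is essentially the argument of \cite[2.2]{DebBour}, and that for $m=1$ it is already implicit in the standard fact that a general free curve through a point can be taken with prescribed tangent there; the content here is only the uniform statement over several points and tangent vectors, handled by the simultaneous blow-up.
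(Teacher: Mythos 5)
There is a genuine gap at the heart of your reduction: the step asserting that $e_j=[v_j]\in E_j$ lies in $\widetilde Y^{\,\circ}$. Your justification --- ``$E_j$ is covered by lines whose images under $\beta$ pass through $y_j\in Y^{\circ}$, and any rational curve meeting $Y^{\circ}$ is contained in $Y^{\circ}$'' --- does not work: a line in $E_j$ is contracted by $\beta$ to the single point $y_j$, so it is not a rational curve in $Y$ at all, and the absorbing property of $Y^{\circ}$ (a subset of $Y$) says nothing about curves inside $\widetilde Y$. To use the corresponding property of $\widetilde Y^{\,\circ}$ you would need a rational curve in $\widetilde Y$ through $e_j$ that is already known to meet $\widetilde Y^{\,\circ}$, i.e.\ you would need to know that some point of $E_j$ lies in $\widetilde Y^{\,\circ}$; but a point of $E_j$ lies in the good locus only if there is a (very free) curve in $\widetilde Y$ through it, whose image in $Y$ is exactly a curve through $y_j$ with prescribed tangent direction --- which is the statement being proved. (When $Y$ is proper one does get $\widetilde Y^{\,\circ}=\widetilde Y$ for free, but the proposition is needed precisely for non-proper $Y$, e.g.\ smooth loci of singular fibers, and your appeal to birational invariance of rational connectedness also tacitly assumes properness.) This is exactly the point where the paper works: it lifts a very free curve through $y$ to a free curve on $\Bl_y(Y)$, takes two general deformations meeting $E$ at $w_1,w_2$ with the line $\ell\subset E$ through $w_1,w_2$ containing $[v]$, and smooths the comb $g_1(\bP^1)\cup\ell\cup g_2(\bP^1)$ fixing $[v]$ after checking that $N_{C/\widetilde Y}\otimes\cI_{[v]}$ is globally generated with no higher cohomology; membership of $[v]$ in any ``good locus'' of $\widetilde Y$ is never invoked and cannot be assumed.

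A second, smaller gap is the transversality at the prescribed point. You correctly note that you need $g$ unramified at $q_j$ and not tangent to $E_j$ there (otherwise $d(\beta\circ g)_{q_j}=0$ and the image of the differential is $0$, not $\mathrm{span}(v_j)$), but ``a small perturbation within the family of curves through the $e_j$'' does not deliver this: the constraint is at the \emph{fixed} point $e_j$, and nothing in Proposition~\ref{prop:thrupts} prevents every curve of the family through $e_j$ from being tangent to $E_j$ there. In the paper's construction this is forced numerically: since $N_{E/\widetilde Y}|\ell\simeq\cO_{\bP^1}(-1)$, the comb has total intersection number $1$ with $E$, so its smoothing meets $E$ only at $[v]$ and transversally, whence the pushed-down curve is unramified at $p$ with $\mathrm{image}(df_p)=\mathrm{span}(v)$. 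So while your blow-up framing matches the paper's starting point, the two essential ingredients --- producing a curve through $[v_j]$ without assuming $[v_j]$ is in the good locus, and controlling the intersection with the exceptional divisor --- are exactly what is missing from the proposal.
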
  
In fact, we can produce a rational curve through an arbitrary
curvilinear subscheme of $Y^{\circ}$;  see \cite[Prop. 13]{HT08}.  
(By definition, a zero-dimensional scheme is {\em curvilinear}
if it can be embedded into a smooth curve.)
The existence of rational curves with prescribed
tangencies can be deduced from Proposition~\ref{prop:thrupts}.  
This foreshadows our weak approximation argument.

\begin{proof}
Fix $y\in Y^{\circ}$
and $0\neq v \in T_yY^{\circ}$, and suppose we are given $f:\bP^1 \ra Y^{\circ}$
with $f(0)=y$.  Since we may choose $f$ to pass through a large of number of
prescribed points in addition to $y$, we may assume that $f$ is very free.
Let 
$$\tilde{f}:\bP^1 \ra \widetilde{Y}=\Bl_y(Y)$$
denote the lift to the blow-up of $Y$ at $y$;  let 
$$E=\bP(T_yY^{\circ})\simeq \bP^{\dim(Y)}\subset \widetilde{Y}$$ 
denote the exceptional divisor and $[v] \in E$ the point corresponding to $v$.  
Since
$$\tilde{f}^*T_{\widetilde{Y}}=f^*T_Y\otimes \cI_0\simeq f^*T_Y(-1),$$
the curve $\tilde{f}$ is at least free.  Thus a general deformation $g_t$
of $\tilde{f}$ meets $E$ at a general point.  

Consider {\em two} such deformations $g_1$ and $g_2$, 
meeting $E$ at $w_1$ and $w_2$ respectively.  Choose these in such a
way that the line $\ell\subset E$ joining $w_1$ and $w_2$ contains
$[v]$.  Consider the reducible curve
$$C=g_1(\bP^1) \cup \ell \cup g_2(\bP^1);$$         
we analyze the normal bundle $F:=N_{C/\widetilde{Y}} \otimes \cI_{[v]}$, corresponding
to infinitesimal deformations of $C \subset \widetilde{Y}$ that
still contain $[v]$.   By Lemma~\ref{lemm:smooth}, it suffices to show this
is globally generated and has no higher cohomology.   
\begin{figure}
\begin{center}
\includegraphics{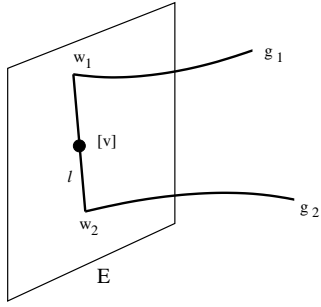}
\end{center}
\caption{The reducible curve $C$}
\end{figure}

We know that 
$F|g_i(\bP^1)$ is globally generated with no higher cohomology because $g_i$ is free.  
We compute $F|\ell$ using the exact sequences:
$$
\begin{array}{cccccccccc}
  & 	&	     &     &  0            &     &
        0               &     &  \\
  & 	&	     &     & \downarrow             &     &
        \downarrow               &     &  \\
0 & \ra & N_{\ell/E} & \ra & N_{\ell/\widetilde{Y}} & \ra & 
	N_{E/\widetilde{Y}}|\ell & \ra & 0 \\
  & 	&     ||     &     & \downarrow             &     &
        \downarrow               &     & 0 \\
0 & \ra & N_{\ell/E} & \ra & N_{C/\widetilde{Y}}|\ell &  \ra &
        Q			&   \ra & 0 \\	
  & 	&          &     & \downarrow             &     &
        \downarrow               &     &  \\
  & 	&          &     & \cO_{\{w_1,w_2\}}             &  =   &
        \cO_{\{w_1,w_2\}}               &     &  \\
& 	&          &     & \downarrow             &     &
        \downarrow               &     &  \\
& 	&          &     &  0	            &     &
        0               &     &  
\end{array}
$$
Since $\ell$ is a line and $E$ is exceptional, we know
$$N_{\ell/E} \simeq \cO_{\bP^1}(1)^{\oplus \dim(Y)-2}, \quad 
N_{E/\widetilde{Y}}|\ell \simeq  \cO_{\bP^1}(-1).$$
However, this negativity is overcome by the contribution of the nodes $w_1$
and $w_2$, which implies that $Q \simeq \cO_{\bP^1}(1)$.  
Thus $N_{C/\widetilde{Y}}|\ell \simeq \cO_{\bP^1}(1)^{\oplus \dim(Y)-1}$
and  $F|\ell \simeq \cO_{\bP^1}^{\oplus \dim(Y)-1}$.      
\end{proof}

\paragraph{Koll\'ar-Miyaoka-Mori and Graber-Harris-Starr Theorems}
The following theorem is a forerunner of weak approximation
results:
\begin{theo} \cite[2.13]{KMM} \label{theo:KMM}
Let $X$ be a smooth proper rationally connected variety
over $F=k(B)$ with model $\pi:\cX \ra B$.  Assume $X(F)\neq \emptyset$,
i.e., there exists a section $s:B\ra \cX$.  Given distinct places $b_1,\ldots,b_r$
of good reduction for $\cX$ and points $x_i \in \cX_{b_i}$ for $i=1,\ldots,r$,
there exists a section $s':B\ra \cX$ with $s'(b_i)=x_i$ for each $i$.
\end{theo}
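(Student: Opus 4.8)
The plan is to reduce the statement to a single application of the smoothing criterion, Lemma~\ref{lemm:smooth}, after producing a suitable reducible curve on a convenient model. First I would fix the given section $s:B\ra\cX$ and the target points $x_i\in\cX_{b_i}$. Since the $b_i$ are places of good reduction, after replacing $\cX$ by a good model (using the gluing proposition cited above) we may assume $\cX_{b_i}$ is smooth, hence isomorphic to $X$ as a variety over $k$; more precisely $\cX$ is smooth along these fibers and the fibers are themselves smooth proper rationally connected varieties, so by Proposition~\ref{prop:thrupts} applied to $\cX_{b_i}$ we have $\cX_{b_i}^{\circ}=\cX_{b_i}$ and there is a very free rational curve in each $\cX_{b_i}$ through any prescribed finite set of points. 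The key geometric input is therefore: inside each good fiber $\cX_{b_i}$, choose a very free rational curve $C_i\subset\cX_{b_i}$ passing through both $x_i$ and the point $s(b_i)$ where the section already meets the fiber.

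Next I would form the comb (or chain) $C=s(B)\cup C_1\cup\cdots\cup C_r$, where each tooth $C_i$ is attached to the handle $s(B)$ at the node $s(b_i)$, and regard this as a stable map $f:C\ra\cX$ (the handle maps by $s$, the teeth by their inclusions; nothing is contracted). I then want to smooth $C$ while keeping the points $x_i$ fixed, so I mark the $x_i\in C_i$ and consider the normal sheaf $N_f\otimes\cI_{x_1,\ldots,x_r}$. Restricted to each tooth $C_i$ this sheaf is, up to the twist by $\cO(-1)$ coming from the attaching node and the marked point, still globally generated with vanishing $H^1$, because $C_i$ is very free in $\cX_{b_i}$ and the normal bundle of $\cX_{b_i}$ in $\cX$ contributes a trivial $\cO_{\bP^1}$ summand (good reduction); restricted to the handle $s(B)$ the sheaf $N_f\otimes\cI$ has a section sheaf that one controls because $s(B)$ already moves in the positive-dimensional family of sections guaranteed by the hypothesis $X(F)\neq\emptyset$—here one uses that through the generic point of $X$ there is a very free curve, so the section can be chosen (after an initial comb-smoothing, exactly as in \cite{GHS}) with $N_{s}$ sufficiently positive, in particular with $N_s\otimes\cI$ globally generated near the attaching nodes and without higher cohomology. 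Granting these local statements, a Mayer--Vietoris / dévissage argument on $C$ pieces them together to show $N_f\otimes\cI_{x_1,\ldots,x_r}$ is globally generated at the nodes with $H^1=0$, so Lemma~\ref{lemm:smooth} produces a smoothing $f':C'\ra\cX$ with $f'(x_i)=x_i$; since $C'$ has genus zero and is irreducible while $\pi\circ f':C'\ra B$ has degree one (only the handle contributes to the degree over $B$), $f'$ is a section $s':B\ra\cX$, and it passes through each $x_i$ as required.

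The main obstacle is arranging enough positivity of the original section $s(B)$ so that the glued normal sheaf has the required cohomological properties—the hypothesis only gives \emph{some} section, whose normal bundle could be badly negative. The standard fix, which I would carry out explicitly, is a preliminary step: attach very free vertical rational curves to $s(B)$ at several auxiliary points of good reduction and smooth (by Lemma~\ref{lemm:smooth}) to replace $s$ by a new section whose normal bundle is as positive as we like; this is precisely the mechanism of \cite{GHS} and \cite{KMM}, and it decouples the attaching-node conditions at the $b_i$ from any pathology of the starting section. A secondary technical point is to ensure the teeth $C_i$ can be chosen to avoid creating extra intersections with $s(B)$ beyond the intended node at $s(b_i)$, and to be unramified there—this follows from the freedom in Proposition~\ref{prop:thrujets} (one may even prescribe the tangent direction at the node) together with a general-position argument, using that the $b_i$ are distinct so the teeth live in distinct fibers and cannot meet each other.
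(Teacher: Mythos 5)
Your proposal follows essentially the same route as the paper's sketch: first replace $s$ by a ``nice'' section whose normal bundle is globally generated with vanishing higher cohomology, by attaching very free vertical curves at auxiliary good places and smoothing (this is exactly Proposition~\ref{prop:nicesection}), then attach very free teeth in the fibers $\cX_{b_i}$ joining $s'(b_i)$ to $x_i$ and smooth the comb fixing the $x_i$ via Lemma~\ref{lemm:smooth}. One small correction: the smoothed curve has arithmetic genus $g(B)$, not zero; what makes it a section is the point you also invoke, namely that it is irreducible and of degree one over $B$.
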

We shall sketch the main ideas of the proof below.  
It took another ten years to remove the assumption on the existence
of the section:
\begin{theo}  \cite{GHS} \label{theo:GHS}
If $X$ is a smooth proper rationally connected variety over $F=k(B)$
then $X(F)\neq \emptyset$.  
\end{theo}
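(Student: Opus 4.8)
The plan is to manufacture a section by starting from a multisection of possibly large degree and cutting its degree down to one, using the comb-smoothing technology of Lemma~\ref{lemm:smooth} and Proposition~\ref{prop:thrupts} at each stage. Before doing any geometry I would make three harmless reductions. First, by the Lefschetz principle it suffices to treat $k=\bC$: the pair $(X,\cX)$ is defined over a subfield of $\bC$ finitely generated over $\bQ$, and a section of the base-changed family over $\bC(B)$ descends. Second, choosing an embedding $X\subset\bP^N$ and resolving, I may assume $\cX$ is a smooth projective scheme, $\pi$ is flat, and the fiber over a general point of $B$ is a smooth rationally connected variety. Properness of $\pi$ is used throughout, both to identify sections with $F$-points and to guarantee that limits of stable maps into $\cX$ exist.

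To produce a multisection, intersect $\cX$ with general hyperplanes: by Bertini this gives a smooth irreducible curve $D\subset\cX$ with $\pi|_D\colon D\to B$ finite of some degree $e$, meeting the general fiber transversally, and (with a little care) passing through a prescribed general point of a general fiber. If $e=1$ we are done, so assume $e\ge 2$; choosing $D$ generically one can also arrange $\pi|_D$ to be simply branched with transitive monodromy. Viewing $D$ via its Stein factorization as a smooth curve $\widetilde D$ carrying a degree-$e$ map to $B$, the inclusion $D\hookrightarrow\cX$ becomes the tautological section of the pulled-back family $\cX_{\widetilde D}:=\cX\times_B\widetilde D\to\widetilde D$, whose general fiber is still rationally connected. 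I now attach to this section a large number $m$ of very free rational curves, each lying in a general (smooth, rationally connected) fiber and glued at a general point of the section; their existence and very-freeness come from Proposition~\ref{prop:thrupts}. For $m\gg 0$ the normal sheaf of the resulting comb, twisted by the ideal of suitable marked points, is globally generated with vanishing $H^1$, so by Lemma~\ref{lemm:smooth} the component of the moduli of such combed sections containing it is smooth of the expected dimension and dominates the relevant space of admissible covers $\widetilde D\to B$.

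The degree drops by a degeneration. Inside the space of admissible covers, degenerate $\widetilde D\to B$ so as to split off one sheet: the limit is a nodal cover with a component mapping isomorphically to $B$ and the remaining component(s) mapping with total degree $e-1$, glued at points over the former branch locus and connected by transitivity of the monodromy. By the smoothness just established, the combed section specializes to a stable map on the corresponding degeneration of $\cX_{\widetilde D}$; discarding rational tails, which are forced into fibers of $\pi$, its restriction to the $(e-1)$-sheeted part is a combed multisection of $\cX\to B$ of degree $e-1$. Equivalently, one may degenerate the base $B$ to $B$ with a rational bridge attached at a general point and re-route one of the $e$ sheets through the rationally connected fiber over the bridge point. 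Either way we descend from $e$ to $e-1$; iterating, we reach $e=1$, i.e.\ a section $s\colon B\to\cX$, and hence an $F$-point of $X$.

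The main obstacle is the interaction of the last two steps. One must check that attaching the very free teeth genuinely forces $H^1$ of the normal sheaf of the comb to vanish — not just for a comb over a fixed $\widetilde D$, but uniformly as $\widetilde D$ moves and degenerates, so that the moduli of combed sections is flat over a component of the space of admissible covers whose closure contains the split cover; and one must control the combinatorics of that degeneration so that the limiting stable map really has a lower-degree part carrying only fiber tails. This normal-bundle bookkeeping along a nodal degeneration, rather than any single clever construction, is where the substance of the proof lies.
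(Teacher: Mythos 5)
You should first note that this survey does not actually prove Theorem~\ref{theo:GHS}: it is quoted from \cite{GHS}, with \cite{StarrClay} indicated as a detailed survey, and only one ingredient of the argument (the comb construction behind Proposition~\ref{prop:nicesection}) is sketched in Section~\ref{sect:general}. Measured against the actual Graber--Harris--Starr proof, your outline has the right overall shape -- start from a multisection, attach many very free vertical teeth at general points, smooth via Lemma~\ref{lemm:smooth}, then degenerate so that a low-degree horizontal piece splits off -- but the two decisive steps are asserted rather than proved, and the induction is muddled as written.

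First, the claim that the deformations of the combed multisection ``dominate the relevant space of admissible covers'' is exactly the hard point, and it does not follow from global generation and vanishing $H^1$ of the twisted normal sheaf: those give unobstructedness and lots of deformations inside $\cX$, but say nothing about how the induced cover of $B$ varies. What \cite{GHS} actually prove is that smoothing the nodes where sufficiently many teeth are attached moves the branch divisor of the induced cover in independently prescribable directions, i.e.\ that the differential of the branch morphism $br:\ocM \ra \mathrm{Sym}^b(B)$ (in the sense of Fantechi--Pandharipande) is surjective on the relevant component; properness of the stable-map space then upgrades dominance to surjectivity. Without this, there is no reason the split nodal cover lies in the closure of the covers you can reach. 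Second, even granting surjectivity, the limit over a degenerate branch divisor is only a stable map, and the degree over $B$ is locally constant in the family: the degree ``drops'' only through reducibility of the limit. One must show the limit's horizontal part contains a component of degree one over $B$ (or at least of degree $<e$, to run an induction on the minimal degree of a multisection), and forcing that particular kind of breaking -- by a careful choice of the limiting branch divisor together with a monodromy argument -- is the substance of \cite{GHS}, not a formality about admissible covers. Finally, your iteration is redundant as stated: if the limit really were a section of $\cX\times_B\widetilde{D}_0\ra\widetilde{D}_0$ over a nodal cover containing a copy of $B$ mapping isomorphically, its restriction to that copy would already be the desired section of $\cX\ra B$; if you cannot guarantee that much structure in the limit, you equally cannot guarantee that the restriction to the degree-$(e-1)$ part is a ``combed multisection'' with the positivity needed to repeat the step (at best you extract some multisection of smaller degree and must restart the comb construction, which should be said explicitly).
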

A good survey of this important result is \cite{StarrClay}.  

\paragraph{Weak approximation at places of good reduction}
\begin{theo}\cite{HT06} \label{theo:WAgood}
Let $X$ be a smooth proper rationally connected variety over $F=k(B)$.  
Then weak approximation holds at places of good reduction for $X$.   
\end{theo}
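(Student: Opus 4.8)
The plan is to reduce, via Observation~\ref{obse2}, to the following concrete task: given a good model $\pi:\cX\ra B$, distinct places $b_1,\ldots,b_r$ of good reduction, and smooth points $x_i\in\cX_{b_i}$, produce a section $s:B\ra\cX$ with $s(b_i)=x_i$. By Theorem~\ref{theo:GHS} there exists at least one section $s_0:B\ra\cX$, and by Proposition~\ref{prop:thrupts} (applied fiberwise, using that each good-reduction fiber $\cX_{b_i}$ is itself smooth proper rationally connected, with $Y^\circ=\cX_{b_i}$) we can move $s_0$ so that it is a very free section of $\pi$ — indeed one may first deform $s_0$ to pass through enough extra general points of a general fiber to guarantee that its normal bundle $N_{s_0}=N_{s_0/\cX}$ is sufficiently positive (ample, say with all summands $\ge 2$). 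This is the standard ``grow the section'' step powered by Lemma~\ref{lemm:smooth}.

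The core construction is then to attach, over each $b_i$, a vertical rational curve that threads $x_i$. Fix $i$. Since $\cX_{b_i}$ is rationally connected and $s_0(b_i)\in\cX_{b_i}$, Proposition~\ref{prop:thrupts} gives a rational curve $f_i:\bP^1\ra\cX_{b_i}$ through both $s_0(b_i)$ and $x_i$ — in fact a free one, which we may take very free by passing through more general points. Form the nodal curve
$$C = s_0(B)\ \cup\ f_1(\bP^1)\ \cup\ \cdots\ \cup\ f_r(\bP^1) \subset \cX,$$
glued at the $r$ points $s_0(b_i)$, and consider the map $g:C\ra\cX$. We want to smooth $g$ keeping it a section of $\pi$ that additionally passes through the marked points $x_i\in f_i(\bP^1)$. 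By Lemma~\ref{lemm:smooth} it suffices to show $N_g\otimes\cI_{\{x_1,\ldots,x_r\}}$ is globally generated with vanishing $H^1$. On the section component this holds by the positivity of $N_{s_0}$ arranged above; on each rational tail $N_g|_{f_i(\bP^1)}$ is computed from $N_{f_i/\cX_{b_i}}$ together with the normal direction of $\cX_{b_i}$ in $\cX$ (which contributes an $\cO_{\bP^1}$ since $b_i$ is a reduced point of $B$), plus the ``node bonus'' from the attaching point $s_0(b_i)$ — exactly the mechanism used in the proof of Proposition~\ref{prop:thrujets}. The node contributions overcome the loss of one degree from the marked point $x_i$, so the twisted normal sheaf is globally generated with no higher cohomology on each tail; a Mayer--Vietoris/dévissage argument over the nodes then gives the same for $N_g\otimes\cI$ on all of $C$. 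Smoothing produces a section $s':B\ra\cX$ with $s'(b_i)=x_i$.

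There is one genuine subtlety I expect to be the main obstacle: Lemma~\ref{lemm:smooth} only guarantees a smoothing of $g$ as a \emph{stable map}, and I must ensure the smoothing stays a \emph{section} of $\pi$ — i.e.\ that the smoothed curve maps isomorphically to $B$ under $\pi$ rather than, say, acquiring vertical components or multiple covers. The right way to handle this is to carry out the deformation theory relative to $B$ from the start: work in the space of sections (equivalently, stable maps to $\cX$ of class $[s_0(B)]+\sum[f_i(\bP^1)]$ lying over $[B]$), where the relevant obstruction calculus involves $N_{g/\cX}$ but the constraint ``$\pi\circ g=\mathrm{id}$'' is built in; because $\pi$ is smooth along $g(C)$ (each $\cX_{b_i}$ is smooth, and the generic fiber is smooth), being a section is an open condition in this deformation space, so a general nearby deformation remains a section. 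Concretely, one can follow the blow-up formalism of Definition~\ref{defi:iterate}/Proposition~\ref{prop:iterate}: the rational tails over $b_i$ become vertical $\bP^1$'s in the fibers of an iterated blow-up, and the statement is that the smoothed section meets the required exceptional locus — a purely "good reduction" version of the picture in equation~(\ref{eq:iterfiber}). Once this relative framework is set up, the positivity computation above applies verbatim, and weak approximation at places of good reduction follows. The only hypotheses used are the existence of a section (Theorem~\ref{theo:GHS}) and rational connectedness of the good fibers; no control at bad places is needed, which is why the result is clean.
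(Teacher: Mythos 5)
Your construction only proves the $N=0$ case of the statement, i.e.\ the Koll\'ar--Miyaoka--Mori theorem (Theorem~\ref{theo:KMM}): a section through prescribed points $x_i$ of the \emph{smooth} fibers $\cX_{b_i}$ of one good model. That is not weak approximation at $b_1,\ldots,b_r$, which requires approximating formal sections to arbitrary order $N$. Your reduction misreads Observation~\ref{obse2}: it demands the section-through-prescribed-points statement for \emph{every} regular model, in particular for the iterated blow-ups $\cX^J$ attached to jet data via Definition~\ref{defi:iterate} and Proposition~\ref{prop:iterate}. For those models the fiber over $b_i$ is no longer smooth --- it is the chain of Equation~(\ref{eq:iterfiber}), a blow-up of $\cX_{b_i}$ glued to a string of blown-up $\bP^d$'s --- and the point $x^J_i$ you must hit lies in the terminal exceptional $\bP^d$, not in $\cX_{b_i}$. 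Your mechanism of attaching a very free curve inside the smooth fiber through $s_0(b_i)$ and $x_i$ simply does not address this configuration, so the argument stops at $0$-jets.

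The paper closes exactly this gap by induction on the jet order $N$: the inductive hypothesis provides a section agreeing with the target jet to order $N-1$, whose proper transform meets the terminal $\bP^d$ at some point $y^N$; one then attaches the chain $T_N\cup T_{N-1}\cup\cdots\cup T_0$, where $T_N$ is the line in $\bP^d$ joining $x^N$ and $y^N$, the intermediate $T_i$ are rulings of the blown-up $\bP^d$'s, and $T_0$ is a very free curve in $\Bl_{\hat{s}(b)}(\cX_b)$, and smooths the resulting reducible curve via Lemma~\ref{lemm:smooth} while fixing $x^N$. Your final paragraph invokes the blow-up formalism only to guarantee that the smoothing remains a section (a minor point: any irreducible deformation in the class $[s_0(B)]+(\text{vertical})$ has degree one over $B$, hence is a section), not to perform this jet-by-jet climb, which is the actual content of Theorem~\ref{theo:WAgood} beyond Theorem~\ref{theo:KMM}. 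The fiberwise positivity computations you outline are fine as far as they go, but without the inductive step through the chain fibers the proof is incomplete.
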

We sketch the ideas of the proof in the next two paragraphs.
Throughout, let $d=\dim(X)$ and $\cX \ra B$ a good model for $X$;  
fix distinct places $b_1,\ldots,b_r \in B$ such that 
$\cX_{b_1},\ldots,\cX_{b_r}$ are smooth.  Let $s:B\ra \cX$ denote
the section coming from the Graber-Harris-Starr theorem.  Our argument proceeds by
induction on $N$, the order of the jets we seek to approximate.    
  
\paragraph{The base case}
The base ($N=0$) case is just the Koll\'ar-Miyaoka-Mori theorem (Theorem~\ref{theo:KMM}), which we sketch
for completeness.  

We start with a preparation step, which is a key ingredient of the Graber-Harris-Starr
theorem.  Basically, we need to show that every rationally connected fibration admits
a `nice section':
\begin{prop} \cite[\S 2]{GHS} \label{prop:nicesection}
Let $\cX \ra B$ be as above, with section $s:B\ra \cX$.  Then there exists 
$s':B \ra \cX$ such that the normal bundle $N_{s'}$ is globally generated, with
vanishing higher cohomology.
\end{prop}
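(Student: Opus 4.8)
The plan is to bend the given section into a comb by attaching many very free rational curves lying in smooth fibres and then to smooth the comb with the Graber--Harris--Starr smoothing lemma (Lemma~\ref{lemm:smooth}); this is the construction underlying \cite[\S 2]{GHS}. It is convenient to begin with a reformulation of the target: for a vector bundle $V$ on the curve $B$, the two conditions ``$V$ is globally generated'' and ``$H^1(B,V)=0$'' hold simultaneously if and only if $H^1(B,V\otimes\cO_B(-b))=0$ for \emph{every} closed point $b\in B$ (apply cohomology to $0\ra V(-b)\ra V\ra V|_b\ra 0$). So it suffices to produce a section $s'$ with $H^1(N_{s'}\otimes\cO_B(-b))=0$ for all $b$.

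For the construction, note first that $\cX\ra B$ has only finitely many places of bad reduction, so I may fix as many distinct places $b_1,\dots,b_m\in B$ as I wish with $\cX_{b_i}$ smooth. Each such fibre is a smooth proper rationally connected variety over $k$, so $Y^{\circ}=\cX_{b_i}$ in Proposition~\ref{prop:thrupts}; using Propositions~\ref{prop:thrupts} and~\ref{prop:thrujets} I choose for every $i$ a general very free immersed rational curve $C_i\subset\cX_{b_i}$ through $s(b_i)$ with a prescribed (hence general) tangent direction $\ell_i\subset T_{s(b_i)}\cX_{b_i}$. Since $C_i$ lies in a fibre while $D:=s(B)$ is a section, $C_i$ meets $D$ transversally at the single point $s(b_i)$; thus $f\colon C=D\cup C_1\cup\dots\cup C_m\ra\cX$ is a comb with smooth handle $D$, unramified at every node, and $\pi\circ f\colon C\ra B$ has degree one while $C$ has arithmetic genus $g(B)$.

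The heart of the argument is a normal-sheaf estimate for $C$. Along each tooth, $N_f|_{C_i}$ is the upward elementary modification of $N_{C_i/\cX}$ at the node in the direction $T_{s(b_i)}D$; since the conormal bundle of a fibre is trivial, $N_{\cX_{b_i}/\cX}\cong\cO_{\cX_{b_i}}$, and $N_{C_i/\cX_{b_i}}$ is globally generated with no higher cohomology, the same holds for $N_{C_i/\cX}$ and, on $\bP^1$, for its upward modification $N_f|_{C_i}$; moreover $\pi^{*}\cO_B(-b)$ is trivial on $C_i$. Along the handle, $N_f|_D$ is obtained from $N_s$ by $m$ upward elementary modifications, at the points $b_i$, in the general directions $\ell_i\subset N_{s,b_i}$. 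Plugging these computations into the Mayer--Vietoris sequence $0\ra\mathcal{F}\ra\mathcal{F}|_D\oplus\bigoplus_i\mathcal{F}|_{C_i}\ra\bigoplus_i\mathcal{F}|_{s(b_i)}\ra 0$ for $\mathcal{F}=N_f\otimes\pi^{*}\cO_B(-b)$ reduces the vanishing $H^1(C,N_f\otimes\pi^{*}\cO_B(-b))=0$, for all $b\in B$, to the single assertion that the modified handle bundle $N_f|_D$ is globally generated and satisfies $H^1(N_f|_D\otimes\cO_B(-b))=0$ for all $b$. I expect this last point to be the real obstacle: one must show that for $m\gg 0$ and general modification directions the positivity injected at the $b_i$ genuinely spreads over $N_s$ — so that the minimal slope of $N_f|_D$ grows with $m$ — rather than piling up in a single sub-line-bundle while a negative summand of $N_s$ survives untouched; the generality of the $\ell_i$, guaranteed by Proposition~\ref{prop:thrujets}, is exactly what should rule this out.

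Granting the estimate, $N_f$ is globally generated at the nodes and $H^1(C,N_f)=0$, so Lemma~\ref{lemm:smooth} (with no marked points) provides a smoothing $f'\colon C'\ra\cX$. Then $C'$ is a smooth connected curve of genus $g(B)$ and $\pi\circ f'\colon C'\ra B$ still has degree one, hence is an isomorphism; composing $f'$ with the inverse of this isomorphism gives a section $s'\colon B\ra\cX$, which is a closed immersion with $N_{s'}\cong N_{f'}$. Finally, $h^1(N\otimes\pi^{*}\cO_B(-b))$ is upper semicontinuous on the product of the deformation space of $f$ with the proper base $B$, and it vanishes identically along $\{f\}\times B$; hence it vanishes on a neighbourhood, and a general smoothing $s'=f'$ satisfies $H^1(N_{s'}\otimes\cO_B(-b))=0$ for all $b\in B$. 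By the reformulation in the first paragraph, $N_{s'}$ is then globally generated with vanishing higher cohomology, as desired.
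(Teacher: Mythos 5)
You follow the same route as the paper's sketch: form a comb whose handle is $s(B)$ and whose teeth are very free rational curves in smooth fibres through the $s(b_i)$ with chosen vertical tangent directions (via Propositions~\ref{prop:thrupts} and~\ref{prop:thrujets}), check positivity of the normal sheaf, and smooth by Lemma~\ref{lemm:smooth}. The surrounding reductions you supply are sound: the criterion ``$H^1(N\otimes\cO_B(-b))=0$ for all $b$'' (only the implication you use is needed, and in fact the equivalence is true), the computation of $N_f$ on the teeth using $N_{\cX_{b_i}/\cX}\simeq\cO$ and the fact that an upward modification of a globally generated bundle with vanishing $H^1$ on $\bP^1$ retains both properties, the Mayer--Vietoris reduction to the handle, the degree-one argument identifying the smoothing with a section, and the semicontinuity argument showing the smoothing inherits the twisted vanishing (the paper merely asserts this last point).

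The difficulty is that the step you flag as an ``expectation'' is not a loose end but the entire content of the proposition: everything else is formal. The claim that for sufficiently many general points $b_i$ and general directions $\ell_i\subset N_s|_{b_i}$ the resulting upward elementary modifications of $N_s$ satisfy $H^1(\,\cdot\,\otimes\cO_B(-b))=0$ for \emph{every} $b\in B$ is exactly what the paper defers to the ``First construction'' and Lemma~2.5 of \cite{GHS} (cf.\ also the comb lemmas of \cite[II.7]{Kolbook}); as written, your proof has a genuine gap precisely there. The missing argument runs along the lines you guess: by Serre duality a failure of vanishing is a nonzero homomorphism from the modified bundle to $\omega_B(b)$; a nonzero $\phi:N_s\ra\omega_B(b)$ extends across the modification at $b_i\neq b$ only if $\phi_{b_i}(\ell_i)=0$, which fails for general $\ell_i$ whenever $\phi_{b_i}\neq 0$; and since $\dim\mathrm{Hom}(N_s,\omega_B(b))$ is bounded independently of $b$, a dimension count over the one-parameter family of twists shows that finitely many teeth attached at general points with general directions kill all such homomorphisms for all $b$ simultaneously. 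Supply this count, or invoke \cite[Lemma 2.5]{GHS} explicitly; with that, your argument is complete and coincides with the paper's.
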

Here is the idea:  Let $U\subset B$ denote the places of good reductive.  Given
any finite collection $b'_1,\ldots,b'_q \in U$ and non-trivial vertical
tangent vectors 
$v_i \in T_{s(b'_i)}\cX_{b'_i}$, Proposition~\ref{prop:thrujets}
gives very free curves $f_i:\bP^1 \ra \cX_{b'_i}$ passing through 
$s(b'_i)$ with tangent $v_i$.  If we choose sufficiently many $b'_1,\ldots,
b'_q$ and appropriate tangent directions $v_i$, the union
$$C=s(B) \cup_{s(b'_1)} f_1(\bP^1) \cup \cdots \cup_{s(b'_q)} f_q(\bP^1)$$
has normal bundle that is globally generated and has no higher cohomology;
for details, consult the `First construction' and Lemma 2.5 of \cite{GHS}.
Deformation theory (cf. Lemma~\ref{lemm:smooth}) allows us to 
smooth $C$ to a section $s':B \ra \cX$, whose normal bundle remains globally
generated without higher cohomology.      

The argument
is now fairly straightforward:  Fix points $x_i \in \cX_{b_i}$ over
places of good reduction $b_1,\ldots,b_r \in B$.  Let $s':B\ra \cX$
be our nice section and $g_i:\bP^1 \ra \cX_{b_i}$ very free curves
such that $g_i(0)=s'(b_i)$ and $g_i(\infty)=x_i$.  The union 
$$C=s'(B) \cup_{s'(b_1)} g_1(\bP^1) \cup \cdots \cup_{s'(b_r)} g_r(\bP^1)$$
has normal bundle $N_{C/\cX}$ such that
$N_{C/\cX} \otimes \cI_{x_1,\ldots,x_r}$
is globally generated without higher cohomology.  (See \cite[II.7.5]{Kolbook}
for a detailed cohomology analysis.)  Lemma~\ref{lemm:smooth} allows us to 
smooth $C$ to a section $s'':B \ra \cX$ that still contains $x_1,\ldots,x_r$.  
\begin{figure}
\begin{center}
\includegraphics{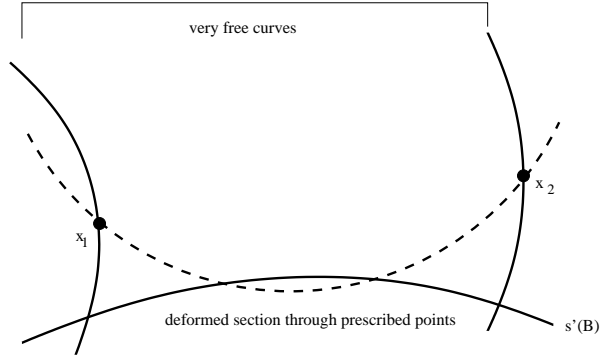}
\end{center}
\caption{Deformation for the Koll\'ar-Miyaoka-Mori theorem}
\end{figure}

\paragraph{The inductive step}
For simplicity, we describe the procedure in the special case where there is only one
place $b=b_1$.  Let $\hat{s}^N$ denote the $N$-jet of the formal section $\hat{s}$ we seek to
approximate and
$\beta:\cX^N \ra \cX$
the associated iterated blow-up construction with fiber 
$$\cX^N_b=\Bl_{\hat{s}(b)}(\cX_b) \cup \Bl_{\hat{t}^{1}(b)}(\bP^d) \cup \cdots \cup \Bl_{\hat{t}^{N-1}(b)}(\bP^d) \cup \bP^d.$$
(We retain the notation of Equation~(\ref{eq:iterfiber}).)  
Note that each $\Bl_{\hat{t}^{i}(b)}(\bP^d)$ is ruled by the proper transforms of the lines through
the blown-up point.  

Recall Proposition~\ref{prop:iterate}:  We seek sections $\sigma:B \ra \cX^N$ with $\sigma(b)=x^N$, where 
$$x^N \in \bP^d \subset \cX^N_b$$
corresponds to the jet $\hat{s}^N$.  
The inductive hypothesis gives a section
$\tau:B\ra \cX$
with  $\tau\equiv \hat{s}^N\pmod{\fm_{B,b}^{N}}$;  its proper transform $\tau^N:B \ra \cX^N$ satisfies
$y^N:=\tau^N(b) \in  \bP^d$.  
We may assume $\tau^N$ is nice, after another application of Proposition~\ref{prop:nicesection}.  
And there is nothing to prove unless $y^N \neq x^N$.  

We construct a chain of rational curves
$$T_0 \cup T_1 \cup \cdots \cup T_N 
$$
with
$$
T_0 \subset \mathrm{Bl}_{\hat{s}(b)}(\cX_b) \quad
T_i \subset \mathrm{Bl}_{\hat{t}^{i}(b)}(\bP^d), 0<i<N,  \quad
T_N \subset \bP^d,
$$
inductively as follows:
$T_N$ is the line joining $x^N$ and $y^N$, $T_{N-1}$ is the unique
ruling meeting $T_{N}$, etc., and $T_0$ is a very free curve meeting $T_1$.  
The reducible curve 
$$C=\tau^N(B) \cup T_N \cup T_{N-1} \cup \cdots \cup T_1 \cup T_0$$
admits a deformation to a section $\sigma:B\ra \cX^N$
still passing through $x^N$.  
\begin{figure}
\begin{center}
\includegraphics{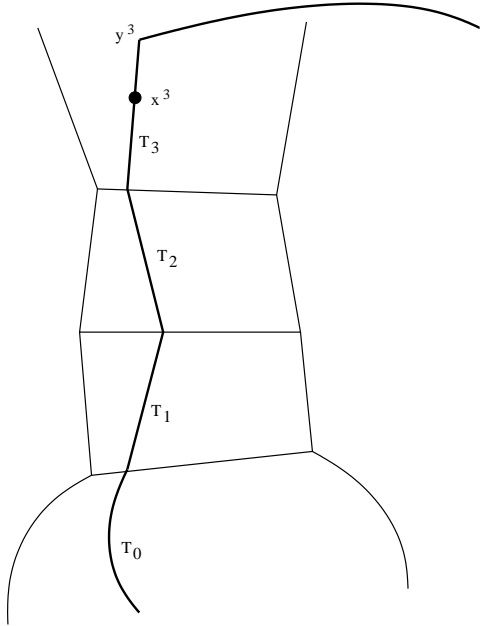}
\end{center}
\caption{The chain of rational curves for $N=3$}
\end{figure}

\paragraph{Direct generalizations}
\begin{defi}
A smooth (not necessarily proper)
variety $Y$ is {\em strongly rationally connected} if through
{\em any} 
point $y\in Y$ there passes a very free rational curve.
\end{defi}
We rephrase Proposition~\ref{prop:thrupts} as follows:
\begin{prop} \label{prop:SRC}
Let $Y'$ be a smooth rationally connected variety over an algebraically
closed field of characteristic zero.  Then there exists an open, nonempty subset
$Y^{\circ} \subset Y'$ that can be characterized as the maximal 
strongly rationally connected subset of $Y'$.  
\end{prop}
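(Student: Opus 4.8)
The plan is to take $Y^{\circ}$ to be the open set furnished by Proposition~\ref{prop:thrupts} and then check that it is the maximal strongly rationally connected open subset of $Y'$; throughout I read ``subset'' as ``open subset'', so that a rational curve has the same normal bundle whether computed inside the subset or inside $Y'$.

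First I would show that $Y^{\circ}$ is itself strongly rationally connected. Since through any two points of $Y^{\circ}$ there passes a rational curve contained in $Y^{\circ}$, the variety $Y^{\circ}$ is rationally connected, and generic smoothness supplies a very free curve $h:\bP^1\ra Y^{\circ}$. Fix an arbitrary $y\in Y^{\circ}$. By Proposition~\ref{prop:thrupts} there is a rational curve $c:\bP^1\ra Y^{\circ}$ passing through $y$ and through a large number $N$ of general points $p_1,\dots,p_N$ of $Y^{\circ}$; since $h$ is free, its deformations cover a dense open subset of $Y^{\circ}$, so each $p_i$ lies on a very free deformation $h_i$ of $h$. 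Attaching the teeth $h_i$ to the handle $c$ at the points $p_i$ produces a comb $C\subset Y^{\circ}$ such that, for $N$ large, $N_{C/Y^{\circ}}\otimes\cI_{y}$ is globally generated with vanishing higher cohomology; this is the standard ``adding very free teeth'' estimate, carried out exactly as in the base case of Theorem~\ref{theo:WAgood} (see also \cite[II.7]{Kolbook}). Lemma~\ref{lemm:smooth} then smooths $C$ to a rational curve through $y$, which for $N$ large is very free. Hence every point of $Y^{\circ}$ lies on a very free curve contained in $Y^{\circ}$.

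Next I would establish maximality: any strongly rationally connected open subset $W\subset Y'$ is contained in $Y^{\circ}$. The point is that such a $W$ satisfies the property defining $Y^{\circ}$ in Proposition~\ref{prop:thrupts}, namely that through any finite set of its points there passes a rational curve contained in $W$. Indeed, given $w_1,\dots,w_m\in W$, one chooses very free curves through each $w_i$ (available because $W$ is strongly rationally connected) together with very free curves joining consecutive ones through general intermediate points (available because $W$ is rationally connected), arranges all components to be very free with normal bundle as positive as needed, and smooths the resulting connected nodal configuration via Lemma~\ref{lemm:smooth}; since the smoothing is again a map to $W$, its image lies in $W$. Applying the same construction at a point $q$ of the nonempty open set $W\cap Y^{\circ}$ shows that $Y^{\circ}\cup W$ also has this property: one glues a very free rational curve in $W$ through $q$ and the prescribed points of $W$ to a very free rational curve in $Y^{\circ}$ through $q$ and the prescribed points of $Y^{\circ}$, and smooths. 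By the maximality clause of Proposition~\ref{prop:thrupts}, $Y^{\circ}\cup W\subseteq Y^{\circ}$, so $W\subseteq Y^{\circ}$. Combined with the first step, this shows $Y^{\circ}$ is the largest strongly rationally connected open subset of $Y'$.

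The hard part will be the deformation-theoretic bookkeeping common to both steps: for a backbone curve decorated with very free teeth one must verify that the normal bundle of the configuration, twisted down by the ideal sheaf of the prescribed points, is globally generated with vanishing $H^{1}$, and that enough positivity has been injected for the smoothed curve itself to come out very free. These are by now routine applications of Lemma~\ref{lemm:smooth} and the cohomology computations of \cite[II.7]{Kolbook} and the ``First construction'' of \cite{GHS}; in fact they occur in essentially this form in the proof of Theorem~\ref{theo:WAgood}. Granting them, the passage between the two descriptions of $Y^{\circ}$ is purely formal.
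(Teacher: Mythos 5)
Your argument is correct in outline, but it takes a genuinely different and much heavier route than the paper, chiefly in the maximality step. For strong rational connectedness of $Y^{\circ}$ the two arguments are essentially the same comb-smoothing yoga (the paper just records, in the proof of Proposition~\ref{prop:thrujets}, that a curve through $y$ and sufficiently many additional points may be assumed very free). For maximality, the paper exploits the clause of Proposition~\ref{prop:thrupts} that you never use: \emph{any rational curve meeting $Y^{\circ}$ is contained in $Y^{\circ}$}. Given $y$ in a strongly rationally connected subset, a very free curve through $y$ deforms with $y$ fixed so as to pass through a general point of $Y'$; it therefore meets $Y^{\circ}$, hence lies in $Y^{\circ}$, hence $y\in Y^{\circ}$ --- three lines and no normal-bundle bookkeeping. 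Your route instead re-verifies the defining property of Proposition~\ref{prop:thrupts} for $W$ and then for $Y^{\circ}\cup W$ by gluing very free curves at a point of $W\cap Y^{\circ}$ and smoothing via Lemma~\ref{lemm:smooth}, and only then invokes the maximality clause. This does work (and your caution about ``maximal'' versus ``greatest'' is legitimate), but every one of the deferred positivity estimates is doing work that the paper's one-line argument avoids entirely.

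One step of your first paragraph needs repair. You deduce that $Y^{\circ}$ is rationally connected --- and hence, by generic smoothness, contains a very free curve --- from the fact that any two of its points lie on a rational curve in $Y^{\circ}$. Passing from that pointwise statement to a dominating family (which is what generic smoothness requires) is precisely the passage that needs $k$ uncountable, cf.\ \cite[IV.3.6]{Kolbook}, which the paper cites elsewhere for exactly this purpose; here $k$ is an arbitrary algebraically closed field of characteristic zero. The fix is easy: either quote, as in the proof of Proposition~\ref{prop:thrujets}, that a rational curve through $y$ and many general points may be assumed very free, or take a very free curve in $Y'$ through a general point (it exists because $Y'$ is smooth and rationally connected in the family sense) and note that it meets, hence is contained in, $Y^{\circ}$ by the clause of Proposition~\ref{prop:thrupts} quoted above. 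With that adjustment your proposal is sound.
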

\begin{proof}
Let $Y^{\circ}$ denote the set produced from Proposition~\ref{prop:thrupts};
recall that any rational curve meeting $Y^{\circ}$ is contained in $Y^{\circ}$.

We have already seen (in the course of the proof of Proposition~\ref{prop:thrujets})
that $Y^{\circ}$ is strongly rationally connected:  The existence of curves
through arbitrary finite collections of points means that there are 
very free curves through any $y\in Y^{\circ}$.  

We show that $Y^{\circ}$ is maximal among strongly rationally connected subsets
of $Y'$.  If there exists such a subset containing $y\in Y'$ then there exists a 
very free curve through $y$, i.e., one joining $y$ to a general point of $Y'$.
In particular, such a curve necessarily meets $Y^{\circ}$ and thus is contained in
$Y^{\circ}$.  
\end{proof}

The proof of Theorem~\ref{theo:WAgood} generalizes directly as follows:
\begin{theo} \label{theo:WASRC}
Suppose $S\subset B$ is a finite set and $B^{\circ}=B\setminus S$.  
Let $\pi:\cY \ra B^{\circ}$ be a smooth morphism with strongly rationally
connected fibers.  Suppose that $\pi$ admits a section.
For any collection $J$ of jet data for $\cY$ over $B^{\circ}$ (cf. (\ref{eq:jetdata})),
there exists a section $s:B^{\circ} \ra \cY$ with those jet data.  
\end{theo}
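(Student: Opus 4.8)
The plan is to mimic the proof of Theorem~\ref{theo:WAgood} almost verbatim, with strong rational connectedness of the fibers of $\pi:\cY \ra B^{\circ}$ taking the place of the "good reduction + rationally connected fiber" hypothesis used there. First I would set up the iterated blow-up apparatus over $B^{\circ}$: given jet data $J=(N;b_1,\ldots,b_r;\hat{s}^N_1,\ldots,\hat{s}^N_r)$ on a regular model of $\cY$ over $B^{\circ}$ (such a model exists since the $b_i$ lie in $B^{\circ}$ where $\pi$ is already smooth, so the $\hat{s}^N_i$ are automatically jets of sections of $\cY^{sm}=\cY$), form $\beta^J:\cY^J \ra \cY$ as in Definition~\ref{defi:iterate}. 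By Proposition~\ref{prop:iterate} (which is purely local over the $b_i$ and so applies unchanged over the open base $B^{\circ}$), it suffices to produce a section $s^J:B^{\circ}\ra \cY^J$ with $s^J(b_i)=x^J_i$ for each $i$. Note that the exceptional components $\Bl_{\hat t^{j}(b_i)}(\bP^d)$ appearing in each fiber $\cY^J_{b_i}$ are rational and in particular strongly rationally connected, so the fibers of $\cY^J \ra B^{\circ}$ over the blown-up points are chains of strongly rationally connected (or rational) varieties, while all other fibers are unchanged strongly rationally connected fibers of $\pi$.

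The argument then proceeds by induction on $N$, exactly as in the sketch of Theorem~\ref{theo:WAgood}. For the base case $N=0$ I would run the Koll\'ar-Miyaoka-Mori-type argument: starting from the given section $s:B^{\circ}\ra\cY$, use Proposition~\ref{prop:thrujets} applied fiberwise (legitimate since each fiber is strongly rationally connected, hence equals its own $Y^{\circ}$) to attach very free vertical curves at many auxiliary points and smooth via Lemma~\ref{lemm:smooth}, obtaining a "nice" section $s'$ with $N_{s'}$ globally generated and $H^1(N_{s'})=0$ — this is the analogue of Proposition~\ref{prop:nicesection}, and it is here that I must check the gluing/cohomology bookkeeping goes through with $B^{\circ}$ in place of $B$ (the normal sheaf computation is local, and the key input is simply that the base curve $B^{\circ}$ is smooth and that we have enough very free vertical curves to deploy). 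Then, given target points $x_i$ in the (smooth) fibers $\cY_{b_i}$, pick very free vertical curves $g_i$ joining $s'(b_i)$ to $x_i$, form $C=s'(B^{\circ})\cup\bigcup g_i(\bP^1)$, verify $N_{C/\cY}\otimes\cI_{x_1,\ldots,x_r}$ is globally generated with no higher cohomology by the standard wedge-of-curves estimate (\cite[II.7.5]{Kolbook}), and smooth.

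For the inductive step, assume the result for jets of order $<N$; by the inductive hypothesis there is a section $\tau$ agreeing with $\hat s^N$ modulo $\fm^{N}$, whose proper transform $\tau^N$ on the iterated blow-up hits the "penultimate" exceptional $\bP^d$, say at $y^N$, and we may take $\tau^N$ nice by another application of the nice-section construction. If $y^N=x^N$ we are done; otherwise build the chain $T_0\cup T_1\cup\cdots\cup T_N$ where $T_N$ is the line in the top $\bP^d$ through $x^N$ and $y^N$, each $T_j$ ($0<j<N$) is the ruling line of $\Bl_{\hat t^{j}(b)}(\bP^d)$ meeting $T_{j+1}$, and $T_0$ is a very free curve in $\Bl_{\hat s(b)}(\cY_b)$ meeting $T_1$ — such a $T_0$ exists because $\cY_b$ is strongly rationally connected, hence so is its blow-up at a point (a very free curve through the meeting point survives the blow-up as at worst a free curve, exactly as in the proof of Proposition~\ref{prop:thrujets}). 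Attach this chain to $\tau^N(B^{\circ})$ and smooth via Lemma~\ref{lemm:smooth}, keeping $x^N$ fixed. The passage from $r=1$ to arbitrary $r$ is routine since the blow-ups over distinct $b_i$ are disjoint and the curves can be attached independently. The main obstacle I anticipate is the cohomology vanishing $H^1(C, N_{C/\cY^J}\otimes\cI)=0$ for the various reducible curves $C$: one must track the degrees of the normal bundle restricted to each exceptional-component ruling line and to $\tau^N(B^{\circ})$, and confirm that the node contributions (and the freeness/very-freeness of $T_0$ and of $\tau^N$) overcome the $\cO(-1)$ summand coming from the exceptional divisors, precisely the computation carried out at the end of the proof of Proposition~\ref{prop:thrujets}; everything else is a transcription of the $B$-case argument with "strongly rationally connected fiber" substituted for "smooth rationally connected fiber at a place of good reduction."
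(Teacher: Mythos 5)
Your overall route is the one the paper intends: the paper gives no separate proof of Theorem~\ref{theo:WASRC}, asserting only that the argument for Theorem~\ref{theo:WAgood} ``generalizes directly,'' and your iterated blow-up setup, the use of Proposition~\ref{prop:iterate}, the induction on the jet order, the Koll\'ar-Miyaoka-Mori-style base case via a nice section, and the chain $T_0\cup T_1\cup\cdots\cup T_N$ in the inductive step are a faithful transcription of that sketch, with strong rational connectedness of the (possibly non-proper) fibers supplying the very free vertical curves through prescribed points and tangent directions.

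The one genuine gap is in how you treat the open base. You form combs whose handle is $s'(B^{\circ})$ (resp.\ $\tau^N(B^{\circ})$) and propose to smooth them via Lemma~\ref{lemm:smooth}, asserting that ``the key input is simply that the base curve $B^{\circ}$ is smooth.'' But Lemma~\ref{lemm:smooth} and the whole stable-map deformation apparatus require a \emph{proper} nodal curve; when $S\neq\emptyset$ the handle is affine, global generation and $H^1$-vanishing become vacuous there, and ``smoothing the comb to a section'' is not a meaningful operation in that framework. The intended argument runs the deformation theory on a proper family over the complete curve $B$: extend $\cY\ra B^{\circ}$ to a proper model $\cX\ra B$ (in the applications $\cY$ is exactly $\cX^{sm}\times_B B^{\circ}$ for a regular proper model $\cX$), extend the given section over $S$ by the valuative criterion of properness, attach the vertical very free curves only in fibers over $B^{\circ}$ and inside $\cY$, and smooth there. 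One must then also explain why the smoothed section of $\cX\ra B$ restricts over $B^{\circ}$ to a section of $\cY$; in the regular-model setting this is the ``calculus'' fact from Section~\ref{sect:elements} that sections factor through $\cX^{sm}$, while for an arbitrary compactification boundary-avoidance is not automatic — and this is precisely the content that smoothness of $B^{\circ}$ alone does not supply. With that adjustment, the rest of your bookkeeping (freeness of the teeth, the $\cO_{\bP^1}(-1)$ versus node-contribution computation on the exceptional rulings, the reduction to one place $b$) matches the paper's sketch.
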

This can be successfully applied to certain kinds of singular fibers, e.g.,
where $\cX \ra B$ is a regular model such that $\cX^{sm} \ra B$ has
strongly rationally connected fibers.  We shall offer specific examples
in Section~\ref{sect:cases} below.

\paragraph{A conjecture}
The Koll\'ar-Miyaoka-Mori theorem and the weak approximation results
sketched above motivate the following general assertion:
\begin{conj}[Weak approximation for rationally connected varieties]  \cite{HT06}
Let $X$ be a rationally connected variety over $k(B)$, the function
field of a curve $B$ over $k$, an algebraically closed field of
characteristic zero.  Then $X$ satisfies weak approximation.  
\end{conj}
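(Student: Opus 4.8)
The plan is to concentrate all the difficulty at the finitely many places of bad reduction and to treat those by the same deformation philosophy used for Theorem~\ref{theo:WAgood}. By Theorem~\ref{theo:birat} we are free to replace $X$ by any smooth birational model, and by Observation~\ref{obse2} it suffices, for one good model $\cX\ra B$, to produce a section of $\cX\ra B$ through prescribed smooth points $x_i\in\cX^{sm}_{b_i}$. Split the chosen places into those of good reduction, where the iterated blow-up of Definition~\ref{defi:iterate} replaces the local fibre by a chain of blow-ups of $\bP^d$ glued to the (still smooth) proper transform of $\cX_b$, and those of bad reduction. For the good places the argument sketched after Theorem~\ref{theo:WAgood}, building on Theorem~\ref{theo:KMM}, already applies verbatim. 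So the entire content of the conjecture is the \emph{local} statement: given a regular model over the formal disk $\widehat B_b$ with rationally connected generic fibre, a starting formal section, and a target smooth point $x\in\cX^{sm}_b$ (possibly on an iterated blow-up of the special fibre), there is a formal section through $x$ agreeing with the given one to the prescribed order. These local solutions can then be glued to a global nice section produced from Proposition~\ref{prop:nicesection} and Theorem~\ref{theo:GHS}.

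To attack the local statement I would try to build a connecting chain of rational curves $T_0\cup T_1\cup\cdots\cup T_m$ inside the (iterated-blown-up) special fibre, joining the value $y$ of the existing section to the target point $x$, with $T_0$ a very free curve in a suitable component and the remaining $T_i$ movable curves — rulings of the exceptional $\bP^d$'s, or free curves on other components — exactly as in the inductive step following Theorem~\ref{theo:WAgood}. Attaching this chain to a nice section $\tau$ and smoothing via Lemma~\ref{lemm:smooth} would produce the desired section, provided $N_{C/\cX}\otimes\cI_x$ for the reducible curve $C$ is globally generated with vanishing $H^1$. The cleanest sufficient condition is that $\cX^{sm}\ra B$ have strongly rationally connected fibres, in which case Theorem~\ref{theo:WASRC} already finishes the job; the real question is how to reduce to, or otherwise maneuver around, that situation for an arbitrary rationally connected $X$.

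That reduction is where I expect the main obstacle to lie. A bad fibre of a resolved model can be an arbitrarily complicated reducible, non-reduced scheme whose components need not even be uniruled, so there is no reason for $\cX^{sm}_b$ to be strongly rationally connected and no a priori bound on the combinatorics one must handle. The natural structural input would be weak semistable reduction (Abramovich--Karu): after a finite base change $B'\ra B$ and a modification one can arrange the special fibres to be reduced normal crossings divisors with controlled strata, and one could then hope to run the chain-of-curves argument component by component, using that each component of such a fibre carries many free curves and that adjacent components are glued along rationally connected strata. But this introduces its own difficulty: weak approximation is not known to descend along a finite base change $B'\ra B$, so one must either carry out the whole construction without any base change or first prove such a descent statement; together with the delicate cohomology bookkeeping for chains of rational curves crossing several normal-crossings components, this is presumably the gap that keeps the statement a conjecture. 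A more modest and more realistic version of the plan is to establish it for those classes of $X$ that are known to admit models with strongly rationally connected smooth locus at every place — the direction pursued in Section~\ref{sect:cases} and Section~\ref{sect:RSC}.
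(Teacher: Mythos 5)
The statement you are trying to prove is stated in the paper as a \emph{conjecture} (attributed to \cite{HT06}); the paper offers no proof of it, and your proposal does not supply one either. What you have written is essentially the reduction that is already implicit in the survey: weak approximation is birational (Theorem~\ref{theo:birat}), it suffices to hit prescribed smooth points on iterated blow-ups of a regular model (Observation~\ref{obse2}), the places of good reduction are handled by Theorem~\ref{theo:WAgood}, and the case where $\cX^{sm}\ra B$ has strongly rationally connected fibres is handled by Theorem~\ref{theo:WASRC}. The entire content of the conjecture is the step you defer: producing, in an \emph{arbitrary} bad fibre of a regular (or iterated-blown-up) model, a chain of rational curves from the value of an existing section to the prescribed smooth point, with normal bundle positivity sufficient for Lemma~\ref{lemm:smooth}. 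Nothing in your plan supplies this: the components of a resolved bad fibre need not carry free curves at all (they need not be uniruled), the smooth locus $\cX^{sm}_b$ need not be strongly rationally connected, and even when the singularities are as mild as ordinary double points the explicit-resolution route can fail for homological reasons --- the Cayley cubic example in Section~\ref{sect:cases} shows the divisibility of $[\mathrm{Exc}(\rho)]$ obstructs the hypotheses of Theorem~\ref{theo:cA1}.

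Your proposed structural input, weak semistable reduction after a finite base change $B'\ra B$, runs into exactly the obstruction you name but do not resolve: weak approximation is not known to descend along finite base changes, and no argument is given (here or in the literature surveyed) for carrying the chain-of-curves construction across several normal-crossings components with the required cohomological vanishing. So the proposal is an accurate map of the known partial results and of where the difficulty sits, but it is not a proof; the honest conclusion is the one in your last sentence, namely that only the special cases of Sections~\ref{sect:cases} and~\ref{sect:RSC} are currently within reach.
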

The main technical challenge is the singular fibers.   In Section~\ref{sect:cases},
we shall survey situations where the fibers can be successfully analyzed,
or where the global geometry ensures weak approximation.

\paragraph{Converse theorems}
We have seen that rationally connected varieties often satisfy weak approximation;  in many
cases, the converse also holds.  The results in this section originate from conversations
with Jason Starr.

We start with a purely geometric result:
\begin{theo} \label{theo:break}
Let $X$ be a smooth projective variety over $k(B)$, with regular model
$\pi:\cX \ra B$.  Suppose that $\cX$ admits a section
$s:B\ra \cX$ with the following property:  Given general points $b,b' \in B$ and
general points $x\in \cX_b$ and $x' \in \cX'_b$, there exists a deformation
$s'$ of $s$ such that $s'(b)=x$ and $s'(b')=x'$.  
Then $X$ is rationally connected.  
\end{theo}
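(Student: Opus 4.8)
The plan is to produce a very free rational curve through a general pair of points of the generic fiber $X$, which by the characterization of rationally connected varieties (Proposition~\ref{prop:thrupts} and the discussion preceding it) suffices. The starting observation is that the hypothesized family of sections $s'$ of $\cX \ra B$ is, after base change to $\overline{k(B)}$, nothing but a family of points on $X_{\overline{F}}$; but more usefully, a one-parameter deformation of a section $s$ inside $\cX$ that moves $s(b)$ over a positive-dimensional subset of $\cX_b$ is governed by the normal bundle $N_s$, and the ``spreading out'' in the fibers is controlled by the positivity of $N_s$. So the first step is to pass from the stated property — deformations realizing two prescribed general values $(x,x')$ over two general points $(b,b')$ — to the conclusion that the evaluation map on the space of deformations of $s$, sending $s' \mapsto (s'(b), s'(b')) \in \cX_b \times \cX_{b'}$, is dominant. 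From the existence of a two-parameter family of sections dominating $\cX_b \times \cX_{b'}$ one extracts, by a dimension count on $H^0(B,N_s)$ and semicontinuity, that for a general deformation $s'$ the normal bundle $N_{s'}$ is sufficiently positive; concretely, one wants $N_{s'}$ to have no higher cohomology and to remain globally generated, i.e. $s'$ is a \emph{nice} section in the sense of Proposition~\ref{prop:nicesection}, and moreover globally generated after twisting down at one point.

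The second and central step is the curve-building argument, exactly parallel to the base case of Theorem~\ref{theo:WAgood}. Fix two general points $b, b' \in B$ and two general points $x \in \cX_b$, $x' \in \cX_{b'}$. Choose a nice section $s'$ with $s'(b) = x$; this is where the hypothesis is used. Now I need a rational curve in the fiber $X = \cX_\eta$ joining (the images of) two general points — but the fibers $\cX_b$ are honest $k$-varieties, so instead I work over $k$ and build rational curves inside the fibers. Take a second nice section $s''$ with $s''(b') = x'$, arrange (again using the deformation hypothesis, and genericity) that $s'$ and $s''$ meet at a point over some third general place, and attach a chain. More precisely, form the reducible curve $C = s'(B) \cup s''(B) \cup (\text{a vertical rational curve or two joining them})$, or even more simply observe that a general deformation of $s'$ already meets a general deformation of $s''$. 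The point is that gluing a nice section to another nice section along a common point yields a nodal curve whose normal bundle is globally generated with vanishing $H^1$ after imposing the two point-constraints at $x$ and $x'$; Lemma~\ref{lemm:smooth} then smooths $C$ to a single section $\sigma$ with $\sigma(b) = x$ and $\sigma(b') = x'$. Varying $(b,b',x,x')$, the total space of such sections dominates $\cX \times_B \cdots$; restricting attention to the generic fiber, we obtain for the generic pair of points of $X$ a connected genus-zero curve (the generic fiber of this family of nodal sections, which is geometrically a chain of $\bP^1$'s over $\overline F$) passing through both — hence $X$ is rationally chain connected over $F$, and since $X$ is smooth over a field of characteristic zero, rationally connected.

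The main obstacle I expect is the passage in the first step from the bare existence of deformations hitting two prescribed general values to genuine \emph{positivity} of the normal bundle of a general section — i.e. showing that the two-point evaluation map being dominant forces $N_{s'}$ to be globally generated with no higher cohomology. This is a semicontinuity/generic-smoothness argument: dominance of $(s' \mapsto (s'(b),s'(b')))$ on a component of the section space gives that the differential is surjective at a general point, and the differential is the evaluation $H^0(B, N_{s'}) \ra (N_{s'})_b \oplus (N_{s'})_{b'}$; surjectivity at two general points, combined with $B$ having positive-dimensional moduli of points and the section being free-ish, bootstraps (as in \cite[IV.3]{Kolbook} or the ``First construction'' of \cite{GHS}) to global generation everywhere and $H^1 = 0$. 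A secondary technical point is ensuring the gluing in step two produces a curve to which Lemma~\ref{lemm:smooth} applies after imposing \emph{two} point-constraints — this needs one of the glued sections to be not merely nice but ample enough to absorb the two conditions, which again comes out of the positivity established in step one, possibly after first deforming to pass through auxiliary general points to make the relevant curve very free in the vertical directions.
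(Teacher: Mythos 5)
The decisive gap is in your step two: at no point do you actually produce a rational curve in a fiber, which is exactly what the theorem asks for. The only vertical rational curves in your construction are posited outright (``a vertical rational curve or two joining them''), and their existence is precisely the conclusion to be proven, so the argument is circular. Moreover, the stated output of step two --- a section $\sigma$ with $\sigma(b)=x$ and $\sigma(b')=x'$ --- is literally the hypothesis of Theorem~\ref{theo:break}, so even if the gluing worked it would gain nothing; and mechanically it does not work: $s'(B)\cup s''(B)$ (with or without vertical components) has degree two over $B$, so any smoothing furnished by Lemma~\ref{lemm:smooth} is a bisection of higher arithmetic genus, not a section, and the ``generic fiber of this family of nodal sections'' is a length-two $F$-scheme (the vertical components miss the generic fiber), not a chain of $\bP^1$'s; hence the final deduction that $X$ is rationally chain connected has no support. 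A secondary issue, which you flagged yourself, is step one: dominance of the two-point evaluation does not force $N_{s'}$ to be globally generated with $H^1=0$ --- surjectivity of $H^0(N_{s'})\ra N_{s'}|_b\oplus N_{s'}|_{b'}$ at two general points does not imply $H^1(N_{s'})=0$ (already false for suitable line bundles of intermediate degree on a curve of genus $\ge 4$) --- but this claim is also unnecessary.

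What is missing is a degeneration (bend-and-break type) mechanism, which is how the paper argues. Reformulate the hypothesis, as you began to, by saying that the evaluation map on the component of $\ocM_{g(B),2}(\cX,\beta)$, $\beta=[s(B)]$, containing $\{s:(B,b,b')\ra\cX\}$ dominates $\cX\times\cX$. Then specialize $b'\rightsquigarrow b$ while the two image points specialize to distinct points $x,x_2$ of the \emph{same} fiber $\cX_b$. Properness of the stable map space gives a limit $\{f'':(B'',p,p_2)\ra\cX\}$ in the fixed class $\beta$; exactly one component of $B''$ maps as a section, so all other components are contracted by $\pi$ into $\cX_b$, are rational by the genus count, and form trees containing the images $x=f''(p)$ and $x_2=f''(p_2)$. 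This yields a chain of rational curves in $\cX_b$ through a general pair of points, so the smooth fiber $\cX_b$ is rationally chain connected, hence (characteristic zero) rationally connected, and therefore so is $X$. Your construction of sections with prescribed values, and the attempted positivity bootstrap for $N_{s'}$, can be discarded entirely.
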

The key hypothesis can be expressed nicely in terms of stable maps (see the Appendix):  Let $\beta=[s(B)] \in H_2(\cX_{\bC},\bZ)$
and $\ocM'_{g(B),2}(\cX,\beta)$ the irreducible
component of the stable map space containing $\{s:(B,b_1,b_2) \ra \cX \}$, where 
$b_1,b_2 \in B$ are general marked
points.  We assume the evaluation mapping
\begin{equation} \label{eq:evaltwo}
\begin{array}{rcl}
\ev:\ocM'_{g(b),2)}(\cX,\beta) &\ra& \cX \times \cX\\
\{f:(C,c_1,c_2)\ra \cX \} & \mapsto & (f(c_1),f(c_2))
\end{array} 
\end{equation}
is dominant.
\begin{proof}
Suppose we have a section $s':B\ra \cX$
taking general values $x\in \cX_b$ and $x'\in \cX_{b'}$ at distinct general
points $b,b' \in B$.  This yields a two-pointed stable mapping
$$\{s':(B,b,b')\ra \cX\}$$
with $\ev(s')=(x,x')$.  

Specialize $b'\rightsquigarrow b$
and $x'\rightsquigarrow x_2\in \cX_b$ for $x_2 \neq x$. 
This induces a specialization of our stable map
$$\{s':(B,b,b')\ra \cX \} \rightsquigarrow \{f'':(B'',p,p_2) \ra \cX \},$$
where $f''(p)=x$ and $f''(p_2)=x_2$.  
Now there is a unique irreducible component $B \subset B''$ such that
the restriction $s''=f''|B$ is a section.  Let $C$ denote the union of 
irreducible components mapped into $\cX_b$.  We know:
\begin{itemize}
\item
$C$ is a tree of rational curves;
\item
$x,x_2 \in f''(C)$.
\end{itemize}
Thus we have a chain of rational curves in $\cX_b$ joining $x$ and $x_2$,
i.e., $\cX_b$ is rationally chain connected.  
\begin{figure}
\begin{center}
\includegraphics{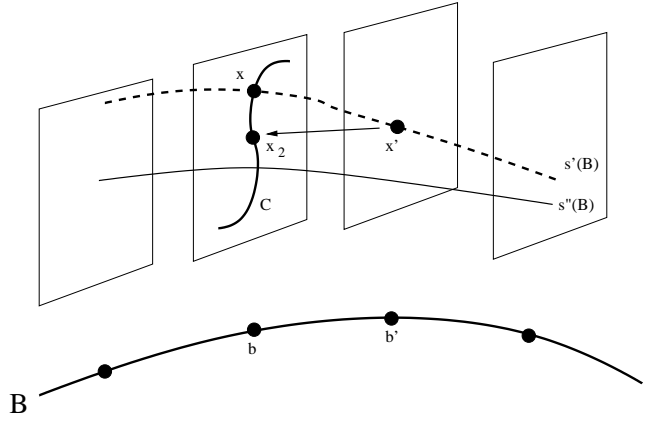}
\end{center}
\caption{Specializing sections until they break}
\end{figure}
\end{proof}

It is not hard to formulate infinitesimal criteria for when our hypothesis holds:
\begin{prop}
Let $X$ be a smooth projective variety over $k(B)$, with regular model
$\pi:\cX \ra B$ admitting a section
$s:B\ra \cX$. 
Suppose the normal bundle $N_s$ has no higher cohomology and, for general
$(b,b') \in B\times B$,
the natural differential
$$d_{b,b'}:\Gamma(B,N_s) \ra N_s|b \oplus N_s|b' \simeq T_{s(b)}\cX_b \oplus T_{s(b')}\cX_{b'}$$
is surjective.  
Then (\ref{eq:evaltwo}) is dominant.
\end{prop}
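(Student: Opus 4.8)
The plan is to show that the differential hypothesis forces the evaluation map \eqref{eq:evaltwo} to be dominant by a dimension count on the image, using the fact that dominance of a morphism between varieties can be checked at a single point by surjectivity of the differential there. First I would fix the component $\ocM'_{g(B),2}(\cX,\beta)$ containing the two-pointed stable map $\{s:(B,b_1,b_2)\ra\cX\}$ for general marked points $b_1,b_2$. Since $N_s$ has no higher cohomology, $H^1(B,N_s)=0$, so by the deformation theory recalled before Lemma~\ref{lemm:smooth} the stable map space is smooth at $\{s:(B,b_1,b_2)\ra\cX\}$ of the expected dimension, and its tangent space there is $\Gamma(B,N_s)\oplus T_{b_1}B\oplus T_{b_2}B$, where the last two summands record the freedom to move the marked points along $B$.

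Next I would identify the differential of $\ev$ at this point. Moving the marked point $c_i$ along the curve while keeping the map fixed changes the image point $f(c_i)=s(b_i)$ only in the direction tangent to the section $s(B)\subset\cX$, which maps isomorphically to $T_{b_i}B$; in particular this contribution lands in a line transverse to $T_{s(b_i)}\cX_{b_i}$, so modulo that line the differential of $\ev$ is computed purely from the deformations of the map, i.e.\ from the restriction map $\Gamma(B,N_s)\ra N_s|b\oplus N_s|b'$. Now $N_s|b_i\simeq N_{s(b_i)/\cX}$, and since $s(B)$ is a section (so meets $\cX_{b_i}$ transversally at a smooth point by the calculus argument establishing $s(B)\subset\cX^{sm}$, cf.\ \eqref{eq:smooth}), we have a canonical identification $N_s|b_i\simeq T_{s(b_i)}\cX_{b_i}$ — this is exactly the target of the map $d_{b,b'}$ in the hypothesis. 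Combining: the differential of $\ev$ at $\{s:(B,b,b')\ra\cX\}$, post-composed with the projection onto $\bigoplus_i T_{s(b_i)}\cX_{b_i}$, is (up to the identifications above) the map $d_{b,b'}$, which is surjective by assumption; and the two extra directions coming from $T_bB\oplus T_{b'}B$ surject onto the complementary two-dimensional summand $T_{s(b)}\cX_b\oplus T_{s(b')}\cX_{b'}\big/\bigl(T_{s(b)}\cX_b\oplus T_{s(b')}\cX_{b'}\bigr)$ — more precisely, onto the normal directions $T_{s(b)}\cX\big/T_{s(b)}\cX_b$ and $T_{s(b')}\cX\big/T_{s(b')}\cX_{b'}$, because $ds$ is an isomorphism onto $T_{s(b)}s(B)$ which projects isomorphically onto that quotient line. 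Hence $d\ev$ is surjective at this point, so $\ev$ is dominant.

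The main obstacle I anticipate is the careful bookkeeping of the two ``marked-point-moving'' directions and verifying they genuinely fill in the normal-to-the-fiber directions $T_{s(b)}\cX/T_{s(b)}\cX_b$ rather than overlapping with the image of $d_{b,b'}$: one must check that the composite $T_bB\ra T_{s(b)}\cX\twoheadrightarrow T_{s(b)}\cX/T_{s(b)}\cX_b$ is an isomorphism, which holds precisely because $\pi\circ s=\mathrm{id}_B$ makes $d\pi$ split the inclusion $T_{s(b)}\cX_b\hookrightarrow T_{s(b)}\cX$ with $ds(T_bB)$ as a complement. A minor secondary point is ensuring ``general $(b,b')$'' can be taken simultaneously with the general choice of marked points defining the component $\ocM'$; this is automatic since the locus of $(b,b')$ where $d_{b,b'}$ is surjective is open and nonempty by hypothesis, and the evaluation map is defined over all pairs of distinct marked points. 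With these identifications in place, the conclusion that \eqref{eq:evaltwo} is dominant is immediate from generic smoothness applied to a morphism with a point of surjective differential.
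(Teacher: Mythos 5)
Your argument is correct, and it reaches the conclusion by a route that differs from the paper's in its choice of moduli space and in how the differential is organized. The paper works with the space of sections: it takes the open locus $\Sect(\cX/B)\subset\oSect(\cX/B)$ of the Hilbert scheme (or algebraic space) component containing $s(B)$, notes that $H^1(B,N_s)=0$ makes it unobstructed, hence smooth at $[s(B)]$, and observes that for fixed distinct $b,b'$ the evaluation $\Sect(\cX/B)\ra\cX_b\times\cX_{b'}$, $s'\mapsto(s'(b),s'(b'))$, has differential exactly $d_{b,b'}$; surjectivity then gives dominance onto $\cX_b\times\cX_{b'}$, and dominance of (\ref{eq:evaltwo}) follows by letting $(b,b')$ vary --- a step the paper leaves implicit. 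You instead compute directly on the two-pointed stable map space $\ocM'_{g(B),2}(\cX,\beta)$, splitting the image of $d\,\ev$ into fibral directions (the restriction $\Gamma(B,N_s)\ra N_s|b\oplus N_s|b'\simeq T_{s(b)}\cX_b\oplus T_{s(b')}\cX_{b'}$, i.e.\ $d_{b,b'}$) and horizontal directions supplied by moving the marked points, using that $d\pi\circ ds=\mathrm{id}$ makes $ds(T_bB)$ a complement to $T_{s(b)}\cX_b$. What your version buys is that the ``vary $(b,b')$'' step is made explicit at the infinitesimal level, so dominance of the map (\ref{eq:evaltwo}) itself drops out at once from a surjective differential at a single smooth point; what the paper's version buys is that no decomposition is needed at all, since on $\Sect(\cX/B)$ the differential of the evaluation is literally $d_{b,b'}$ (cf.\ \cite[I.2]{Kolbook}), and sections have no automorphisms, so one avoids stack-theoretic bookkeeping. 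One small imprecision in your write-up: the tangent space to the pointed stable map space at $\{s:(B,b,b')\ra\cX\}$ is not canonically the direct sum $\Gamma(B,N_s)\oplus T_bB\oplus T_{b'}B$ (the splitting is not intrinsic, and in general one must account for deformations of the pointed domain); but since you only use that these particular first-order deformations exist and compute their images under $d\,\ev$, this does not affect the surjectivity argument, and the vanishing $H^1(B,N_s)=0$ does give smoothness at this point, so the ``surjective differential at a smooth point implies dominant'' conclusion stands.
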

\begin{proof}
Let $\oSect(\cX/B)$ denote the irreducible component of the Hilbert scheme (or
algebraic space, if $\cX\ra B$ happens not to be a projective scheme \cite{Art69}) 
containing $s(B) \subset \cX$.  There is an open subset 
$$\Sect(\cX/B) \subset \oSect(\cX/B)$$
containing the {\em bona fide} sections.  This is unobstructed at $[s(B)]$ by our vanishing
assumption, hence
smooth of the expected dimension.  Given $b,b'\in B$ distinct, there 
is an evaluation morphism
$$\begin{array}{rcl}\Sect(\cX/B) & \ra & \cX_b \times \cX_{b'} \\
                     s' & \mapsto & (s'(b),s'(b')).
\end{array}
$$
with differential $d_{b,b'}$ (e.g., \cite[I.2]{Kolbook}). 
Since this is surjective, the evaluation
morphism is dominant.  
\end{proof}

\begin{coro}
Assume $k$ is uncountable.  Let $X$ be a smooth projective variety over $k(B)$
with regular model $\pi:\cX \ra B$.  Suppose there exist distinct places of good 
reduction $b,b' \in B$ such that, for general $x\in \cX_b$ and $x'\in \cX_{b'}$,
there exists a section $s:B \ra \cX$ with $s(b)=x$ and $s'(b)=x'$.  
Then $X$ is rationally connected.
\end{coro}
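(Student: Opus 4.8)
The strategy is to exploit the uncountability of $k$ to replace the hypothesis — which a priori only furnishes a possibly \emph{different} section through each general pair $(x,x')$ — by a \emph{single} irreducible family of sections whose evaluation at the pair of places $(b,b')$ is dominant, and then to invoke the argument of Theorem~\ref{theo:break} with the two places held fixed.

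First I would consider the parameter space of sections of $\pi$ (an open subspace of the relative Hilbert scheme, or relative Hilbert algebraic space if $\cX$ is not a scheme \cite{Art69}). This space is locally of finite type over $k$, hence has at most countably many irreducible components $W_1, W_2,\ldots$; on each $W_i$ the assignment $s\mapsto (s(b),s(b'))$ is a morphism to $\cX_b\times\cX_{b'}$, and I write $Z_i\subset \cX_b\times\cX_{b'}$ for the closure of its image. The hypothesis produces a proper closed subset $W\subsetneq \cX_b\times\cX_{b'}$ such that every point of $(\cX_b\times\cX_{b'})\setminus W$ equals $(s(b),s(b'))$ for some section $s$, hence lies in some $Z_i$; therefore $\cX_b\times\cX_{b'}=W\cup\bigcup_i Z_i$. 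Since $\cX_b\times\cX_{b'}$ is an irreducible variety over the uncountable field $k$, it is not a countable union of proper closed subvarieties, so — $W$ being proper — some $Z_{i_0}$ must coincide with all of $\cX_b\times\cX_{b'}$. Thus the evaluation morphism $W_{i_0}\ra \cX_b\times\cX_{b'}$ is dominant: a general section $s_0$ parametrized by $W_{i_0}$ admits deformations within $W_{i_0}$ taking any prescribed general values in $\cX_b$ and in $\cX_{b'}$.

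With such a family in hand I would run the specialization argument from the proof of Theorem~\ref{theo:break}, keeping $b$ and $b'$ fixed. As $b$ and $b'$ are places of good reduction, $\cX_b$ and $\cX_{b'}$ are smooth, and the restriction of $\pi$ to the open locus $U\subset B$ of good reduction is a smooth proper morphism over a connected base. Starting from a section in $W_{i_0}$ with general values $x\in\cX_b$ and $x'\in\cX_{b'}$, specialize $b'\rightsquigarrow b$ and $x'\rightsquigarrow x_2\in\cX_b$ with $x_2\neq x$ — legitimately, since $\cX|_U\ra U$ is smooth and proper, so a general point of $\cX_{b'}$ specializes to a general point of $\cX_b$. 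The limit of the associated two-pointed stable maps has one component a section and the rest a tree of rational curves in $\cX_b$ meeting both $x$ and $x_2$; letting $x, x_2$ range over general points of $\cX_b$, this shows $\cX_b$ is rationally chain connected, and hence — being smooth and in characteristic zero — rationally connected.

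Finally, $\cX_b$ is a fiber of the smooth proper family $\cX|_U\ra U$ over the connected curve $U$, so by the deformation-invariance of rational connectedness in characteristic zero (\cite[IV.3]{Kolbook}) the geometric generic fiber of $\cX|_U\ra U$ — that is, $X$ over $\overline{k(B)}$ — is rationally connected; equivalently, $X$ is rationally connected. The main obstacle is the middle step: confirming that the section space has only countably many components and that the hypothesis genuinely forces one of them to dominate $\cX_b\times\cX_{b'}$. Once that single dominant family is produced, the remainder is the proof of Theorem~\ref{theo:break} essentially verbatim, combined with the standard fact that rational connectedness propagates in smooth proper families.
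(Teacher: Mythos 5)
Your overall route is the same as the paper's: the paper's proof of this corollary consists precisely of the remark that uncountability of $k$ (via the countability of the components of the space of sections, as in \cite[IV.3.6]{Kolbook}) upgrades the set-theoretic hypothesis to dominance of an evaluation morphism on a single irreducible family, after which one runs the breaking argument of Theorem~\ref{theo:break} and concludes by deformation invariance of rational connectedness. Your first step (countably many components $W_i$, the union $W\cup\bigcup_i Z_i$ cannot exhaust the irreducible variety $\cX_b\times\cX_{b'}$ over an uncountable field) is exactly this reduction, and your last step is fine.

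The one place where your argument has a real gap is the justification of the specialization $b'\rightsquigarrow b$, $x'\rightsquigarrow x_2$. The reason you give --- that $\cX|_U\ra U$ is smooth and proper, so general points of $\cX_{b'}$ specialize to general points of $\cX_b$ --- concerns specializations of points in the target, not of the two-pointed sections themselves. What the breaking argument actually requires is that the pair $(x,x_2)\in\cX_b\times\cX_b$ lie in the (closed, by properness) image of the evaluation map on the irreducible component of the two-pointed stable map space determined by $W_{i_0}$; in Theorem~\ref{theo:break} this is automatic because $b,b'$ are \emph{general}, so $\ev$ is dominant onto $\cX\times\cX$. Here your hypothesis only controls values at the fixed pair $(b,b')$, so dominance onto $\cX_b\times\cX_{b'}$ alone does not yet put $\cX_b\times\cX_b$ in the closure of the image, and no family of sections through $x$ at $b$ and through prescribed points of nearby fibers $\cX_{b'_t}$ is supplied by the hypothesis. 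The gap is fixable: for general $x\in\cX_b$ the sections in $W_{i_0}$ with $s(b)=x$ dominate $\cX_{b'}$, so the closure of their union is an irreducible subvariety of $\cX$ containing $\cX_{b'}$ and the point $x\notin\cX_{b'}$, hence equals $\cX$; letting the second marked point vary over $B$, the closed image of $\ev$ on the proper component therefore contains $\{x\}\times\cX\supset\{x\}\times\cX_b$, and properness yields a limit stable map with both marked points evaluating to $(x,x_2)$, after which the breaking argument and your concluding steps go through verbatim.
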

See \cite[IV.3.6]{Kolbook} for a discussion of how the assumption that $k$ is uncountable
allows us to pass from the set-theoretic condition (there exists a section through
all points of the fibers) to an algebro-geometric condition (the evaluation morphism
is dominant).

\section{Special cases of weak approximation}
\label{sect:cases}
We continue to assume $k$ is algebraically closed of characteristic zero and $B$ is a smooth
projective curve over $k$, with function field $F=k(B)$.  

\paragraph{Fibration theorems}
The following theorem of Colliot-Th\'el\`ene and Gille \cite[2.2]{CTG} is fundamental for inductive arguments:
\begin{theo} \label{theo:CTG}
Let $X^{\circ}, Y^{\circ}$ denote smooth varieties over $F$ and $f:X^{\circ} \ra Y^{\circ}$
a smooth morphism over $F$ with connected fibers.  Assume that
\begin{itemize}
\item{$Y^{\circ}$ satisfies weak approximation;}
\item{for each $y\in Y^{\circ}(F)$, $X^{\circ}_y=f^{-1}(y)$ admits a rational point 
and satisfies weak approximation.}
\end{itemize}
Then $X^{\circ}$ satisfies weak approximation.
\end{theo}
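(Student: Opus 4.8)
The plan is to reduce the statement to the combinatorial criterion of Observation~\ref{obse2}, using the iterated blow-up machinery and the hypotheses on $Y^{\circ}$ and on the fibers. Concretely, fix a collection of jet data for $X^{\circ}$: distinct places $b_1,\ldots,b_r$ of $F$ (more precisely, of a suitable model) and local sections $\hat{s}_i \in X^{\circ}(\widehat{F}_{b_i})$ that we wish to approximate to order $N$. First I would push these data forward via $f$ to obtain jet data $\hat{t}_i = f \circ \hat{s}_i \in Y^{\circ}(\widehat{F}_{b_i})$ on the base. Since $Y^{\circ}$ satisfies weak approximation, there exists a global rational point $y \in Y^{\circ}(F)$ agreeing with the $\hat{t}_i$ to order $N$ at each $b_i$. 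This $y$ corresponds to a section of a model of $Y^{\circ}$ over $B$; I would then work over the generic point, restricting attention to the fiber $X^{\circ}_y = f^{-1}(y)$, which is a smooth variety over $F$.

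Next I would transfer the jet data from $X^{\circ}$ to $X^{\circ}_y$. The key point is that $\hat{s}_i$, being a section of $X^{\circ} \to Y^{\circ}$ lying over $\hat{t}_i$, and with $y$ chosen to agree with $\hat{t}_i$ to order $N$, can be reinterpreted — after a harmless coordinate adjustment at each $b_i$, or by shrinking to high enough jet order — as an $N$-jet of a section of $X^{\circ}_y$ over $\widehat{F}_{b_i}$. This is essentially an infinitesimal lifting statement: the smoothness of $f$ guarantees that the fibers of $X^{\circ} \to Y^{\circ}$ deform nicely, so that prescribing the $N$-jet of $\hat{s}_i$ is equivalent, after fixing $y$ compatibly, to prescribing an $N$-jet inside $X^{\circ}_y$. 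By hypothesis $X^{\circ}_y$ has a rational point and satisfies weak approximation, so there is a global section $s : B \to \cX^{\circ}_y$ of a model of $X^{\circ}_y$ matching these jets to order $N$. Composing with the inclusion $X^{\circ}_y \hookrightarrow X^{\circ}$ yields a section of $X^{\circ}$ over $F$ that approximates the original jet data, and by Observation~\ref{obse1} this establishes weak approximation for $X^{\circ}$.

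The main obstacle I anticipate is the bookkeeping in the transfer step: matching a model of $X^{\circ}$ with models of $Y^{\circ}$ and of the varying fibers $X^{\circ}_y$, and verifying that "$y$ agrees with $\hat{t}_i$ to order $N$" really does let one view the $N$-jet of $\hat{s}_i$ as living in the single fiber $X^{\circ}_y$ rather than in a nearby fiber. Since $f$ is only smooth (not proper), one must take some care that the local section $\hat{s}_i$ stays within the smooth locus and that no spurious degeneration occurs at the $b_i$; in practice one may need to pass to a slightly higher jet order $N' \ge N$ on the base and on the total space, which is permissible since weak approximation to all orders is what we are after. I would also remark that a cleaner route, avoiding explicit models altogether, is to invoke Proposition~\ref{prop:iterate} directly on $X^{\circ}$ and $Y^{\circ}$ simultaneously: the iterated blow-ups of a model of $X^{\circ}$ over the $b_i$ map to the iterated blow-ups of a model of $Y^{\circ}$, reducing the assertion to finding a point in a fiber of the blown-up family, which is exactly what weak approximation for $Y^{\circ}$ together with weak approximation for the fiber provides.
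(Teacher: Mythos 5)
Your proposal is correct and follows essentially the route the paper indicates for this theorem (cited from Colliot-Th\'el\`ene--Gille): push the local points forward, approximate first in the base, use smoothness of $f$ (Hensel's lemma, i.e.\ the $\fm$-adic implicit function theorem) to replace each $\hat{s}_i$ by a nearby local point of the chosen fiber $X^{\circ}_y$, and then approximate inside $X^{\circ}_y$. One caution: since $X^{\circ}$ and $Y^{\circ}$ are not assumed proper, the regular-model and iterated blow-up formalism of Observations~\ref{obse1}--\ref{obse2} and Proposition~\ref{prop:iterate} is not the right framework here (and your closing ``cleaner route'' through it does not apply); the jet conditions should be phrased directly in local coordinates for the product topology on adelic points, which is what the body of your argument in fact does.
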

Note that the varieties need not be projective;  indeed, we shall shrink them 
so as to satisfy the smoothness hypothesis.  
The idea of the proof is quite natural:  Approximate first in the base, then in the
fibers.  

Here is a quick but very useful consequence:
\begin{coro} \label{coro:conic}
Let $X$ be a smooth projective variety over $F$, $Y$ a smooth projective variety rational over $F$,
and $f:X\ra Y$ a conic fibration, i.e., a morphism whose geometric generic fiber is isomorphic to $\bP^1$.  
Then $X$ satisfies weak approximation.
\end{coro}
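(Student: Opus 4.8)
The plan is to deduce Corollary~\ref{coro:conic} from Theorem~\ref{theo:CTG} by stripping away the bad locus and then invoking known facts about conics. First I would shrink: since $Y$ is rational over $F$, it satisfies weak approximation by Corollary~\ref{coro:rational}, and after removing from $Y$ a proper closed subset $Z$ I can arrange that $f$ is smooth over $Y^\circ = Y\setminus Z$, with geometrically connected fibers (the generic fiber is $\bP^1$, hence geometrically integral, and smoothness plus connectedness is an open condition on the base). Set $X^\circ = f^{-1}(Y^\circ)$. One subtlety: weak approximation for $Y$ does not automatically pass to the open subvariety $Y^\circ$. But this is exactly the birational-invariance argument already carried out in the proof of Theorem~\ref{theo:birat}/Corollary~\ref{coro:rational}: nonempty open subsets of $Y(\widehat F_b)$ are Zariski dense, so $Y^\circ(\bA_F)$ is dense in $Y(\bA_F)$ in the product topology, and hence $Y^\circ$ itself satisfies weak approximation.

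Next I would verify the fiber hypothesis of Theorem~\ref{theo:CTG}. For each $y\in Y^\circ(F)$, the fiber $X^\circ_y$ is a smooth conic over $F=k(B)$. Since $k$ is algebraically closed of characteristic zero, $F$ is a $C_1$ field (Tsen's theorem), so every conic over $F$ has a rational point; thus $X^\circ_y(F)\neq\emptyset$. Having a rational point, the conic $X^\circ_y$ is isomorphic to $\bP^1_F$, which is rational over $F$, hence satisfies weak approximation by Corollary~\ref{coro:rational}. So both bullet points of Theorem~\ref{theo:CTG} are met, and we conclude that $X^\circ$ satisfies weak approximation.

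Finally I would pass back from $X^\circ$ to $X$. Here $X^\circ \subset X$ is a dense open subvariety of a smooth projective variety, so once more the birational-invariance principle (Theorem~\ref{theo:birat}, or directly the density argument from the proof of Corollary~\ref{coro:rational}) shows that weak approximation for $X^\circ$ implies weak approximation for $X$: we approximate a given adelic point of $X$ by points of $X^\circ(\widehat F_{b_j})$ to arbitrary order, using that nonempty opens in $X(\widehat F_{b_j})$ are Zariski dense.

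The only real point requiring care — the ``main obstacle,'' though it is quite mild — is the bookkeeping needed to reduce the genuinely projective varieties $X$ and $Y$ of the statement to the merely smooth (possibly non-proper) varieties $X^\circ$ and $Y^\circ$ to which Theorem~\ref{theo:CTG} applies, and then to climb back up. Concretely this means checking that the open locus $Y^\circ$ over which $f$ is smooth with geometrically connected fibers is nonempty (generic smoothness in characteristic zero, plus geometric integrality of the generic fiber $\bP^1$), and invoking twice the fact — already established inside the proof of Corollary~\ref{coro:rational} — that weak approximation is insensitive to passing between a smooth variety and a dense open subvariety. Everything else is immediate from Tsen's theorem and Corollary~\ref{coro:rational}.
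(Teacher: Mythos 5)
Your proposal is correct and follows essentially the same route as the paper: shrink to the smooth locus $Y^\circ$ of $f$, apply Theorem~\ref{theo:CTG} with the base handled by Corollary~\ref{coro:rational} and the fibers handled by the fact that conics over $F=k(B)$ split (Tsen), hence are $\bP^1$. Your extra care in justifying the passage between $X,Y$ and the dense open subsets $X^\circ,Y^\circ$ is exactly the point the paper leaves implicit, so there is nothing to correct.
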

Here, take $Y^{\circ} \subset Y$ to be the open subset over which $f$ is smooth and
$X^{\circ}=f^{-1}(Y^{\circ})$.  
The base satisfies weak approximation by Corollary~\ref{coro:rational}.
The fibers are all isomorphic to $\bP^1$---indeed, conics over $F$ are automatically split---hence
also satisfy weak approximation.

\paragraph{Classification of surfaces and weak approximation}
The birational classification of rational surfaces over a non-closed field is due to Enriques,  Manin \cite{Manin}, and Iskovskikh \cite{Isk}.
Let $X$ be a smooth projective surface over $F$ that is geometrically rational.  Assume that $X$ is {\em minimal}, in the 
sense that it admits no birational morphisms $\phi:X \ra X'$ to a nonsingular projective surface, defined over $F$.
Then $X$ is isomorphic to one of the following:
\begin{itemize}
\item{$\bP^2$ or a quadric $Q\subset \bP^3$;}
\item{a Del Pezzo surface with $\Pic(X)=\bZ K_X$, of degree $d=K_X \cdot K_X$;}
\item{a conic bundle over $\bP^1$ with $\Pic(X)\simeq \bZ \oplus \bZ$.}
\end{itemize}
When $X=\bP^2$, a quadric, or a Del Pezzo surface of degree $d\ge 5$, then $X$ is rational 
and satisfies weak approximation by Corollary~\ref{coro:rational} (cf. \cite[\S 3.5]{HaCMI}).  
The conic bundle cases are covered by Corollary~\ref{coro:conic}.  The case of degree four Del Pezzo surfaces falls
under this category (see \cite[\S 2]{CTG}):  If $X$ is a degree four Del Pezzo surface over $F=k(B)$ then $X(F)$
is Zariski dense by the Koll\'ar-Miyaoka-Mori theorem (Theorem~\ref{theo:KMM}).  For suitable $x\in X(F)$, the blow-up $X'=\mathrm{Bl}_x(X)$ is a 
cubic surface containing a line $\ell$, i.e., the exceptional divisor over $x$.  Projecting from $\ell$ gives a conic
bundle structure
$$\pi_{\ell}:X' \ra \bP^1;$$
weak approximation for $X'$ and $X$ therefore follows.

Thus to complete the case of rational surfaces, it remains to prove weak approximation for Del Pezzo surfaces of degree $3$, $2$, and $1$.  

\begin{rema}[suggested by Colliot-Th\'el\`ene]
Some of the discussion of degree four del Pezzo surfaces 
above extends to more general fields.
If such a surface admits a rational point not lying on a line then
it is unirational \cite[Thm.~29.4]{Maninbook}.  And over an infinite perfect
field, a degree four del Pezzo surface with a rational point admits
a rational point not lying on a line (cf. \cite[Thm.~30.1]{Maninbook}).  
In particular, we can find a rational point such that projection from
that point yields a smooth cubic surface.
\end{rema}

\paragraph{Hypersurfaces of very low degree}
Here is another example of how Theorem~\ref{theo:CTG} can be profitably applied:
\begin{theo} \label{theo:LD} \cite{HT09}
Let $X\subset \bP^n$ be a smooth hypersurface of degree $d$.  Define 
$\phi:\bN \ra \bN$ by the recursive formula
$$\phi(1)=1, \quad \phi(d)=\binom{\phi(d-1)+d-1}{\phi(d-1)}, d>1.$$
Then $X$ satisfies weak approximation if $n \ge \phi(d)$.  
\end{theo}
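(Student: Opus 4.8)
The plan is to induct on the degree $d$, using Theorem~\ref{theo:CTG} to peel off one degree at a time via a linear-section fibration. The base case $d=1$ is trivial: a hyperplane in $\bP^n$ is $\bP^{n-1}$, which is rational, so weak approximation holds by Corollary~\ref{coro:rational}, and indeed $\phi(1)=1$ imposes no constraint. For the inductive step, suppose the result is known for hypersurfaces of degree $d-1$. Given a smooth hypersurface $X\subset\bP^n$ of degree $d$ with $n\ge\phi(d)$, I would set $m=\phi(d-1)$ and consider the family of linear subspaces $\Lambda\cong\bP^{m+1}\subset\bP^n$; the intersection $X\cap\Lambda$ is (for general $\Lambda$) a smooth hypersurface of degree $d$ in $\bP^{m+1}$. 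The dimension count is exactly what the recursion $\phi(d)=\binom{\phi(d-1)+d-1}{\phi(d-1)}=\binom{m+d-1}{m}$ is engineered to deliver: $\binom{m+d-1}{m}$ is the dimension of the space of degree-$(d-1)$ forms in $m+1$ variables (i.e. $\dim H^0(\bP^m,\cO(d-1))$), and one arranges the fibration so that the fibers are degree-$(d-1)$ hypersurfaces in $\bP^m$, whose weak approximation is the inductive hypothesis.

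Concretely, I would fix a point $p\in X$ (which exists over $F$ by Theorem~\ref{theo:GHS}, since a low-degree hypersurface with $n\ge\phi(d)\ge d$ is rationally connected) and consider the linear system of hyperplanes through $p$, or better, project in a way that exhibits $X$ (after blowing up appropriate linear centers, which preserves weak approximation by Theorem~\ref{theo:birat}) as fibered over a rational base $Y^{\circ}$ with fibers that are smooth degree-$(d-1)$ hypersurfaces in projective spaces of dimension $\ge\phi(d-1)$. The base $Y^{\circ}$ should be an open subset of a Grassmannian or projective space, hence rational over $F$, so it satisfies weak approximation by Corollary~\ref{coro:rational}. For Theorem~\ref{theo:CTG} to apply I also need each fiber over an $F$-point to have a rational point; this again follows from Graber--Harris--Starr applied fiberwise, provided the fibers are genuinely of dimension large enough to be rationally connected — which is guaranteed by $n\ge\phi(d)$ forcing the fiber dimension to be at least $\phi(d-1)-1$, and then one invokes the inductive hypothesis both for the rational point and for weak approximation of the fiber. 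Shrinking $X$ and $Y$ to open subsets where the morphism is smooth is harmless, as noted after Theorem~\ref{theo:CTG}, and weak approximation for the open subset implies it for the smooth projective $X$ by Theorem~\ref{theo:birat}.

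The main obstacle, and the step deserving the most care, is constructing the fibration itself: one must realize a smooth degree-$d$ hypersurface $X\subset\bP^n$ birationally as a family of smooth degree-$(d-1)$ hypersurfaces over a rational base, with control over the fiber dimension. The natural construction is to write $X=\{F_d=0\}$ and, after choosing coordinates adapted to a flag, express $F_d$ in the form $F_d = \sum_i x_i G_i$ where the $G_i$ are forms of degree $d-1$ in a complementary set of variables; eliminating the $x_i$ linearly exhibits $X$ (away from a linear subspace) as fibered over the projective space of the $[x_i]$ with fibers $\{G_{[x]}=0\}$ of degree $d-1$. The delicate points are: (i) verifying that the generic fiber is \emph{smooth} (this requires $X$ generic, or a Bertini-type argument, and one reduces to the generic case since weak approximation for the generic smooth hypersurface of given $(n,d)$ specializes appropriately — or one argues directly that the smooth locus of the fibration is nonempty and dense); (ii) checking the fiber lives in $\bP^m$ with $m\ge\phi(d-1)$, which is precisely the content of the inequality $n\ge\phi(d)=\binom{m+d-1}{m}$ once one sees that the number of coefficients available to form the $G_i$ is the count $\binom{m+d-1}{m}$; and (iii) ensuring the base is rational over $F$, not merely over $k$, so that Corollary~\ref{coro:rational} applies — this is automatic since the base is an open subset of a projective space defined over $F$. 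Once the fibration is in hand, Theorem~\ref{theo:CTG} does the rest mechanically, and the recursion closes.
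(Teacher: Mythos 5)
Your overall strategy --- induct on $d$ and peel off one degree at a time via the Colliot-Th\'el\`ene--Gille fibration theorem (Theorem~\ref{theo:CTG}), with fibers that are degree-$(d-1)$ hypersurfaces in a projective space of dimension $\ge\phi(d-1)$ --- is indeed the strategy of the paper and of \cite{HT09}. But the step you yourself flag as ``the main obstacle,'' constructing the fibration, is where the proof actually lives, and your proposed construction does not work. Writing $F_d=\sum_i x_iG_i$ with the $G_i$ forms in a \emph{complementary} set of variables forces $F_d$ to be linear in the $x_i$; such a hypersurface is singular along the linear space $\{y_0=\cdots=y_t=0\}$ as soon as $d\ge 3$ (already no smooth cubic surface in $\bP^3$ admits such an expression), so this normal form is unavailable for the varieties the theorem is about. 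Your fallback of taking general linear sections $X\cap\Lambda$ with $\Lambda\simeq\bP^{m+1}$ produces hypersurfaces of degree $d$, not $d-1$, so it does not advance the induction; and one cannot simply ``reduce to generic $X$,'' since weak approximation is not known to pass from the generic hypersurface to an arbitrary smooth one.

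The missing idea is that the center of projection must be a linear subspace $\Lambda\simeq\bP^{\phi(d-1)-1}$ \emph{contained in $X$ and defined over $F$}. If $\Lambda=\{x_0=\cdots=x_s=0\}\subset X$, then $F_d=\sum_i x_iG_i$ with the $G_i$ of degree $d-1$ in \emph{all} the variables; each $(m+1)$-plane through $\Lambda$ (where $m=\dim\Lambda$) meets $X$ in $\Lambda$ plus a residual degree-$(d-1)$ hypersurface in that $\bP^{m+1}=\bP^{\phi(d-1)}$, so projection from $\Lambda$ realizes $\Bl_{\Lambda}X\ra\bP^{n-\phi(d-1)}$ as the required fibration. (Note the fibers live in $\bP^{m+1}$, not $\bP^{m}$.) After shrinking to the smooth locus, Theorem~\ref{theo:CTG}, the inductive hypothesis, and Theorem~\ref{theo:GHS} applied to the fibers over $F$-points finish the argument. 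The existence of $\Lambda$ over $F$ is precisely where the hypothesis $n\ge\phi(d)$ enters: under this numerical condition the Fano scheme of $(\phi(d-1)-1)$-planes on $X$ is smooth of the expected dimension with ample anticanonical class --- for $d=3$ this is the computation $\omega_{F_1(X)}\simeq\cO(5-n)$ carried out in the text --- hence rationally connected, hence possessed of an $F$-point by Graber--Harris--Starr. You invoke Graber--Harris--Starr only to find a point of $X$ and points of the fibers; applying it to the Fano scheme of linear subspaces of $X$ is the essential arithmetic input your proposal is missing.
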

Tabulating 
$$\begin{array}{r|cccc} 
d & 1 & 2 & 3 & 4 \\
\hline
\phi(d) & 1 & 2 & 6 & 84 
\end{array}
$$
we see that $\phi(d) \gg d^2$ for $d \ge 4$.  Theorems~\ref{theo:WARSC1} and \ref{theo:WARSC2} below
give stronger results for $d\ge 4$.   
We therefore focus on Theorem~\ref{theo:LD} in the special case $d=3$.  

Here is the idea:  Let $F_1(X) \subset \bG(1,n)$ denote the variety of lines on $X$, which is smooth of
the expected dimension \cite[1.12]{AK}.  An application of the adjunction formula shows 
$$\omega_{F_1(X)} \simeq \cO_X(5-n),$$
so $F_1(X)$ has ample anticanonical class  (`the Fano variety of lines is Fano').
It follows \cite[Thm.~0.1]{KMMb} that $F_1(X)$ is rationally connected and thus has an $F$-rational point $\Lambda$
by the Graber-Harris-Starr Theorem (Theorem~\ref{theo:GHS}).  

Consider the projection from $\Lambda$
$$\pi_{\Lambda}: \bP^n \dashrightarrow \bP^{n-2}$$
and the induced
$$\pi_{\Lambda}:\Bl_{\Lambda}X \ra \bP^{n-2},$$
which is a conic bundle.  Corollary~\ref{coro:conic} implies $X$ satisfies weak approximation.


\paragraph{Cubic surfaces with mild singular fibers}
We describe situations where Theorem~\ref{theo:WASRC} applies to cubic surfaces:

\begin{prop} \label{prop:cubicRDP} \cite[\S 5]{HT08}
Let $Y$ be a cubic surface with only rational double points.  Then the smooth
locus $Y^{sm} \subset Y$ is strongly rationally connected.   
\end{prop}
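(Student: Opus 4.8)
The plan is to show that through every smooth point $y_0 \in Y^{sm}$ there passes a very free rational curve, which is exactly the definition of strong rational connectedness. Since $Y$ is a cubic surface with only rational double points, it is a (possibly singular) weak del Pezzo surface; its minimal resolution $\rho:\widetilde Y \ra Y$ is a smooth weak del Pezzo surface of degree three, obtained from $\bP^2$ by blowing up six points in almost general position. The exceptional locus of $\rho$ is a disjoint union of trees of $(-2)$-curves lying over the rational double points, and $\rho$ is an isomorphism over $Y^{sm}$. The anticanonical class $-K_{\widetilde Y}$ is nef and big, and $-K_{\widetilde Y}\cdot(-K_{\widetilde Y})=3$, so $-K_{\widetilde Y}$ is base-point free and contracts exactly the $(-2)$-curves; indeed $\rho$ itself is (up to the anticanonical model) the morphism associated to $|-K_{\widetilde Y}|$. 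I would work on $\widetilde Y$ and then push forward.

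The key step is to produce, through a prescribed point $\tilde y_0 \in \widetilde Y$ lying off all the $(-2)$-curves, a smooth rational curve $\widetilde C$ with $\widetilde C\cdot K_{\widetilde Y}$ sufficiently negative and $\widetilde C$ disjoint from the $(-2)$-curves. Concretely, I would take $\widetilde C$ in the anticanonical class $-K_{\widetilde Y}$ (a smooth genus-one curve is the generic member, so instead) — better: take a general member of $|-2K_{\widetilde Y}|$ or $|-mK_{\widetilde Y}|$ for suitable $m$. More efficiently, recall the classical fact that a smooth del Pezzo surface of degree three carries conics (the fibers of the conic bundle structures obtained by projecting from a line), and these sweep out the surface; on a weak del Pezzo one still has such conic classes $D$ with $D^2=0$, $-K_{\widetilde Y}\cdot D = 2$, $D\cdot E = 0$ for every $(-2)$-curve $E$. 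A general member through $\tilde y_0$ is a smooth rational curve disjoint from the $(-2)$-locus with self-intersection zero; by attaching two such general conics through $\tilde y_0$ meeting transversally, or by taking $\widetilde C$ in a class like $-K_{\widetilde Y}+D$ (so $\widetilde C^2 = 3 + 2\cdot 2 + 0 = ?$ — I'd compute: $(-K+D)^2 = K^2 - 2K\cdot D + D^2 = 3+4+0=7$, $-K\cdot(-K+D)=3+2=5$, hence genus $0$), I get a smooth rational curve of large positive self-intersection through $\tilde y_0$, disjoint from all $(-2)$-curves. Its normal bundle in $\widetilde Y$ is $\cO_{\bP^1}(\widetilde C^2)$ with $\widetilde C^2 \geq 1$, so $\widetilde C$ is very free in $\widetilde Y$.

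It remains to transfer this to $Y^{sm}$. Since $\widetilde C$ is disjoint from the exceptional locus of $\rho$, the morphism $\rho$ restricts to an isomorphism from a neighborhood of $\widetilde C$ onto a neighborhood of $C=\rho(\widetilde C)\subset Y^{sm}$, so $C$ is a smooth rational curve in $Y^{sm}$ through $y_0$ whose normal bundle in $Y^{sm}$ equals $N_{\widetilde C/\widetilde Y}=\cO_{\bP^1}(\widetilde C^2)$ with $\widetilde C^2>0$; thus $f:\bP^1\xrightarrow{\sim}C\hookrightarrow Y^{sm}$ is a very free curve through $y_0$. Since $y_0$ was arbitrary, $Y^{sm}$ is strongly rationally connected. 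The main obstacle is the classification bookkeeping: one must check that for a weak del Pezzo cubic surface, the conic classes (equivalently, classes of smooth rational curves avoiding the $(-2)$-locus with positive self-intersection) genuinely move in a family covering all of $\widetilde Y$ and passing through an arbitrary point off the $(-2)$-curves — this uses that the base locus of the relevant linear system lies inside the exceptional locus, which follows from $-K_{\widetilde Y}$ being base-point free and the $(-2)$-curves being the only curves on which it is trivial. Once that geometry is pinned down, the rest is a routine normal-bundle and adjunction computation.
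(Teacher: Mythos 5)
There is a genuine gap, and it is the central existence claim on which your whole construction rests: that a weak del Pezzo cubic $\widetilde Y$ always carries conic classes $D$ with $D^2=0$, $-K_{\widetilde Y}\cdot D=2$ and $D\cdot E=0$ for every $(-2)$-curve $E$ (or, more generally, smooth rational curves of positive self-intersection disjoint from the exceptional locus). This is false once the singularities are bad enough. Take the cubic surface with a single $E_6$ singularity: the $(-2)$-curves of the minimal resolution span the full root lattice $K_{\widetilde Y}^{\perp}\simeq E_6$, so any divisor class orthogonal to all of them is a rational multiple of $K_{\widetilde Y}$. No such class can satisfy $D^2=0$, $D\cdot K_{\widetilde Y}=-2$, and the classes $-mK_{\widetilde Y}$ have arithmetic genus $\ge 1$; consequently there is \emph{no} smooth rational curve on $\widetilde Y$ disjoint from the exceptional locus at all, and every rational curve in $Y^{sm}$ is an immersed nodal or cuspidal curve (e.g.\ a nodal anticanonical section avoiding the singular point). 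So the plan ``produce a smooth embedded curve $\widetilde C$ with $N_{\widetilde C/\widetilde Y}=\cO_{\bP^1}(\widetilde C^2)$, $\widetilde C^2>0$, off the $(-2)$-locus'' cannot be carried out in general; very freeness has to be verified for singular images of $\bP^1$, which your normal-bundle argument does not address. (A smaller slip: your fallback class $-K_{\widetilde Y}+D$ has $p_a=1+\tfrac12(7-5)=2$, not $0$, so it is not a rational curve class either.)

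For contrast, the paper does not try to exhibit a very free curve through $y$ directly. Using Proposition~\ref{prop:thrupts} it suffices to join $y$ to a \emph{general} point $y'$ by a single rational curve inside $Y^{sm}$: such a curve automatically meets, hence lies in, the maximal strongly rationally connected open subset. The connecting curve is then found as an irreducible singular member of the pencil of hyperplane sections through the line $\overline{yy'}$: the induced elliptic fibration on a blow-up has twelve singular fibers counted with multiplicity, and a bookkeeping of Milnor numbers (the fiber through the singular point contributes at most four, the remaining reducible fibers at most six) forces at least one singular fiber to be irreducible and to avoid the singular point. That irreducible nodal (or cuspidal) plane section is exactly the kind of non-embedded rational curve your approach cannot produce, and it is what makes the argument work for all rational double points rather than only for mild configurations where conic classes orthogonal to the $(-2)$-curves happen to exist.
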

We sketch this in the special case where $Y$ has
a single ordinary double point $y_0$.  
\begin{proof}
We apply Proposition~\ref{prop:thrupts}:  Any rational
curve in $Y^{sm}$ that meets the maximal strongly rationally connected subset of
$Y^{sm}$ is contained in that subset.  Thus
given $y \in Y^{sm}$, it suffices to exhibit a rational curve joining
$y$ to a general point $y' \in Y^{sm}$.   The fact that $y'$ is general implies
\begin{itemize}
\item
If $\ell$ is the line
containing $y$ and $y'$, then $\ell$ meets $Y$ transversally at
three distinct points $\{y,y',y''\}$.
\item
There are no lines on $Y$ through $y'$ or $y''$.
\item
There exists no hyperplane section containing $y'$ or $y''$, 
but not containing $y_0$, and consisting of a line and 
a conic meeting tangentially.
\end{itemize}

Consider the pencil of hyperplane
sections containing $\ell$, which induces an elliptic fibration
$$\varphi:\Bl_{y_0,y,y',y''}Y \ra \bP^1.$$
It suffices to exhibit an irreducible singular fiber of $\varphi$.
Now $\varphi$ has a total of twelve singular fibers, counted with multiplicities.

First, consider the fiber $F_0$ corresponding to the hyperplane section $H_0$ through $y_0$.
The possibilities for $H_0$ are:
\begin{enumerate}
\item{an irreducible plane cubic with a node at $y_0$;}
\item{an irreducible plane cubic with a cusp at $y_0$;}
\item{the union of a line and a conic meeting transversally at $y_0$;}
\item{the union of a line and a conic meeting tangentially at $y_0$.}
\end{enumerate}
Note that $F_0$ consists of the exceptional curve over $y_0$ and the proper
transform of $H_0$.  The corresponding possibilities are:
\begin{enumerate}
\item{the union of two $\bP^1$'s meeting transversally in two points;}
\item{the union of two $\bP^1$'s meeting tangentially;}
\item{the union of three $\bP^1$'s meeting pairwise transversally;}
\item{the union of three $\bP^1$'s coincident at a point.}
\end{enumerate}
The multiplicity of $F_0$ is $2,3,3$, and $4$ respectively;  these are computed by
summing the Milnor numbers of the corresponding singularities \cite[2.8.3]{Te}.

There might be up to three additional
reducible fibers beyond $F_0$;  these are unions of lines through $y$ and conics through
$y'$ and $y''$, meeting transversally.  Altogether, these contribute at most six to the 
multiplicity count.  
In order to account for all twelve singular fibers, there must be at least one that
is irreducible and disjoint from $y_0$.  
\end{proof}

Here is the application to weak approximation:
\begin{prop}
Let $X$ be a cubic surface over $F=k(B)$ and $\pi:\cX \ra B$ a regular model.  
Suppose that $S \subset B$ is a finite set with complement $B^{\circ}$, chosen such that
for $b\in B^{\circ}$ of bad reduction,
$\cX_b$ is a cubic surface with 
rational double points.  Then $X$ satisfies weak approximation at places of $B^{\circ}$.  
\end{prop}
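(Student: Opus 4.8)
The plan is to apply Theorem~\ref{theo:WASRC} to the restriction of the model over $B^{\circ}$. First I would restrict $\pi:\cX \ra B$ to $B^{\circ}$ and pass to the open subset $\cX^{sm}|_{B^{\circ}} \subset \cX$ of points where $\pi$ is smooth. Over places $b \in B^{\circ}$ of good reduction, the fiber $\cX^{sm}_b = \cX_b$ is a smooth cubic surface, hence rational, hence strongly rationally connected. Over places $b \in B^{\circ}$ of bad reduction, by hypothesis $\cX_b$ is a cubic surface with only rational double points, so $\cX^{sm}_b = \cX_b^{sm}$ is strongly rationally connected by Proposition~\ref{prop:cubicRDP}. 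Thus $\cY := \cX^{sm}|_{B^{\circ}} \ra B^{\circ}$ is a smooth morphism with strongly rationally connected fibers. Before invoking Theorem~\ref{theo:WASRC}, I must check $\cY \ra B^{\circ}$ admits a section: the Graber-Harris-Starr theorem (Theorem~\ref{theo:GHS}) gives $X(F) \neq \emptyset$, hence a section $s:B \ra \cX$ of the full model; since sections automatically factor through $\cX^{sm}$ (the calculus argument recalled after \eqref{eq:smooth}), its restriction $s|_{B^{\circ}}$ lands in $\cY$.

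Next I would translate "weak approximation at places of $B^{\circ}$" into the language of Observation~\ref{obse1}/Theorem~\ref{theo:WASRC}. Given jet data $J = (N; b_1,\ldots,b_r; \hat{s}^N_1,\ldots,\hat{s}^N_r)$ with all $b_i \in B^{\circ}$, each $\hat{s}^N_i$ is an $N$-jet of a section of $\cX^{sm} \ra B$ at $b_i$, hence an $N$-jet of a section of $\cY \ra B^{\circ}$. Theorem~\ref{theo:WASRC} then produces a section $\sigma:B^{\circ} \ra \cY$ realizing this jet data. The remaining point is that $\sigma$, being a section of $\cX^{sm}|_{B^{\circ}} \subset \cX$ over the open curve $B^{\circ}$, extends to a genuine section $s':B \ra \cX$ over all of $B$: this is immediate from the valuative criterion of properness applied to $\pi:\cX \ra B$ at the finitely many missing points of $S$, and the extension agrees with $\sigma$ on $B^{\circ}$ so it still has the prescribed jet data at $b_1,\ldots,b_r$. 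By Observation~\ref{obse1}, applied to one regular model and one collection of jet data at a time but with the places restricted to $B^{\circ}$, this establishes weak approximation at the places of $B^{\circ}$.

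I expect the only genuine subtlety to be matching the hypotheses of Theorem~\ref{theo:WASRC} precisely: that theorem is stated for a smooth morphism $\pi:\cY \ra B^{\circ}$ with strongly rationally connected fibers admitting a section, and produces sections $B^{\circ} \ra \cY$ with prescribed jet data — so I must be careful that the jet data for $X$ at places of $B^{\circ}$ (which involves a priori the full model $\cX$) is the same as jet data for $\cY$. This is fine because sections and their jets factor through $\cX^{sm}$ by the calculus argument, and over $B^{\circ}$ the locus $\cX^{sm}$ restricted there is exactly $\cY$. One should also note that $\cY$ need only be a smooth algebraic space over $B^{\circ}$ if the good model is taken as an algebraic space, but Theorem~\ref{theo:WASRC} and the iterated blow-up constructions it relies on are insensitive to this (cf. the discussion of algebraic-space models earlier in the excerpt). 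Finally, the reduction in Observation~\ref{obse1} requires only \emph{one} regular model, so there is no loss in assuming at the outset that $\cX$ is regular, consistent with the statement's hypothesis.
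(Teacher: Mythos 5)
Your proposal is correct and follows essentially the same route as the paper: apply Theorem~\ref{theo:WASRC} to the smooth locus $\cY = \cX^{sm}\times_B B^{\circ}$, with Proposition~\ref{prop:cubicRDP} supplying strong rational connectedness of the bad fibers' smooth loci and Theorem~\ref{theo:GHS} supplying the section. The paper states only these three ingredients; the bookkeeping you add (jets factoring through $\cX^{sm}$, extending $\sigma$ across $S$ by properness) is exactly the implicit content.
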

Here we apply Theorem~\ref{theo:WASRC} to the open subset $\cY \subset \cX\times_B B^{\circ}$ where
$\pi$ is smooth.  Proposition~\ref{prop:cubicRDP} gives strong rational connectedness;
Theorem~\ref{theo:GHS} gives the required section.

\paragraph{Further applications to mild singular fibers}
We list further cases where this line of reasoning applies.  Let $Y$ be a projective variety
with smooth locus $Y^{sm}$.  Then $Y^{sm}$ is strongly rationally connected provided
\begin{enumerate}
\item{$Y$ is a degree two Del Pezzo surface with certain types of rational double
points; this is due to Knecht \cite{Kn1,Kn2}.}
\item{$Y$ is a log Del Pezzo surface, including surfaces with quotient singularities and ample anticanonical
class;  this is a result of C. Xu \cite{Xu}, and is applied to weak approximation
questions in \cite{Xu2}.}
\item{$Y\subset \bP^n$ is a hypersurface of degree $d\le n$ with isolated terminal singularities \cite[\S 6]{HT08};
this includes hypersurfaces of dimension $\ge 3$ with ordinary double points, i.e., with local equation
$$x_1^2+x_2^2+\cdots + x_n^2 + \text{ higher order terms }=0.$$}
\end{enumerate}
Combining the last example with the discussion of cubic surfaces, we obtain:
\begin{coro}[Weak approximation for general Fano hypersurfaces]  \cite{HT08}
Let $X \subset \bP^n$ be a hypersurface of degree $d\le n$ over $F=k(B)$, with square-free discriminant.
Then $X$ satisfies weak approximation.
\end{coro}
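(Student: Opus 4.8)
The plan is to reduce to the strong--rational--connectedness statements for mildly singular fibers recorded above, and then quote Theorem~\ref{theo:WASRC}. First I would dispose of small degrees: if $d\le 2$ then $X$ is a smooth quadric (or a hyperplane) of dimension $\ge 1$, which has an $F$-rational point by the Graber--Harris--Starr theorem (Theorem~\ref{theo:GHS}), hence is rational over $F$, and weak approximation follows from Corollary~\ref{coro:rational}. So assume $d\ge 3$, whence $n\ge 3$; since $\omega_X\simeq\cO_X(d-n-1)$ is anti-ample, $X$ is a smooth Fano hypersurface and in particular rationally connected.

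Next I would produce the model. Spreading out $X\subset\bP^n_F$ gives a projective model $\cX\subset\bP^n_B$, flat over $B$, with fibers $\cX_b\subset\bP^n$ of degree $d$. The essential local input is that the square--free hypothesis on the discriminant makes $\cX$ a \emph{regular} model and forces every fiber $\cX_b$ to have \emph{at worst a single ordinary double point}: at a simple zero $b$ of the discriminant the family is, \'etale-locally near the singular point of $\cX_b$, the pullback along $t\mapsto u(t)\,t$ (with $u(0)\neq 0$) of the versal deformation $x_1^2+\cdots+x_n^2=s$ of a node, and the resulting total space $\{x_1^2+\cdots+x_n^2=u(t)\,t\}$ is smooth with a one-nodal special fiber. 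This is the standard picture for square-free discriminants, and I would include the short Jacobian computation. Hence, for $b\in B$, the fiber $\cX_b$ is either smooth, or --- when $n=3$ --- a cubic surface with a single rational double point, or --- when $n\ge 4$, so $\dim\cX_b\ge 3$ --- a hypersurface of degree $d\le n$ with one isolated terminal singularity.

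Now I would assemble the global argument. Put $\cY=\cX^{sm}$, so $\pi|_{\cY}:\cY\ra B$ is a smooth morphism. Over a place of good reduction $\cY_b=\cX_b$ is a smooth proper rationally connected variety, hence strongly rationally connected (Proposition~\ref{prop:SRC}, using $Y^{\circ}=Y$ for proper $Y$); over a place of bad reduction $\cY_b=\cX_b^{sm}$ is strongly rationally connected by Proposition~\ref{prop:cubicRDP} when $\cX_b$ is a cubic surface with a rational double point, and by the case of degree $\le n$ hypersurfaces with isolated terminal singularities treated above when $\dim\cX_b\ge 3$. Thus $\pi|_{\cY}$ is a smooth morphism with strongly rationally connected fibers over all of $B$, and it admits a section: Theorem~\ref{theo:GHS} gives a section $s:B\ra\cX$, which factors through $\cX^{sm}=\cY$ by the derivative computation of Section~\ref{sect:elements}. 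Applying Theorem~\ref{theo:WASRC} with $S=\emptyset$ and $B^{\circ}=B$, for every collection $J$ of jet data for $\cY$ over $B$ there is a section $B\ra\cY\subset\cX$ with those jets. Since $\cX$ is a regular model and every section of $\pi$ factors through $\cX^{sm}$, jet data for $\cX$ and for $\cX^{sm}$ agree, so Observation~\ref{obse1} gives weak approximation for $X$.

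I expect the main obstacle to be precisely the local analysis at the bad places: one must check that ``square-free discriminant'' genuinely confines each degeneration to a single ordinary double point on a regular total space, so that the hypotheses of Proposition~\ref{prop:cubicRDP} and of the terminal-hypersurface case are met on the nose (in particular that no worse singularity, and no second singular point, can slip through while keeping the discriminant square-free). Once that is settled the rest is bookkeeping --- feeding $\cX^{sm}$ into Theorem~\ref{theo:WASRC}, which already encapsulates all the deformation theory --- so no new geometric ideas are needed beyond those surveyed above.
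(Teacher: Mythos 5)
Your proposal follows essentially the same route as the paper: spread out to a regular projective model whose bad fibers have a single ordinary double point, note that the smooth locus of each such fiber is strongly rationally connected (Proposition~\ref{prop:cubicRDP} for cubic surfaces, the terminal-hypersurface case for dimension $\ge 3$), and conclude via the Graber--Harris--Starr section and Theorem~\ref{theo:WASRC}. The one step you flag as the remaining obstacle --- that a square-free discriminant really forces exactly one ordinary double point per bad fiber (rather than assuming nodality to write down the local model) --- is precisely what the paper settles by citing Teissier \cite{Te}: the multiplicity of the discriminant at a point equals the sum of the Milnor numbers of the singularities of the corresponding fiber, and an isolated singularity has Milnor number one exactly when it is an ordinary double point.
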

\begin{proof}  
The condition that the discriminant is square-free implies $X$ admits a regular projective model
such that the singular fibers each have a single
ordinary double point.  Indeed, the multiplicity of the discriminant at a point equals the sum of the Milnor numbers 
of the singularities in the corresponding fiber \cite{Te}; an isolated singularity has Milnor number one precisely when it is an
ordinary double point.  
\end{proof}

\paragraph{Non-regular models}
Let $X$ be a smooth projective rationally connected variety over $F=k(B)$
and $\pi:\cX \ra B$ a model.  When $\cX$ is regular, we saw in Section~\ref{sect:elements}
that sections of $\pi$ necessarily factor through $\cX^{sm} \subset \cX$, the locus
where $\pi$ is smooth.  However, when $\cX$ is not regular it may admit sections passing
through singularities over places of bad reduction.  

There are a number of approaches we can take to deal with this eventuality.  The 
most direct is to resolve singularities
$\varrho: \widetilde{\cX} \ra \cX$ to obtain a regular model
$\widetilde{\pi}:\widetilde{\cX} \ra B$.  
Sections $\sigma:B \ra \cX$ through singularities lift to sections 
$\widetilde{\sigma}:B \ra \widetilde{\cX}$ meeting exceptional divisors of
$\varrho$ lying over those singularities.  When this resolution can be 
performed explicitly, we can attempt to construct sections meeting prescribed
exceptional divisors.  However, this approach requires precise control over the
rational curves in various homology classes of the fibers of $\widetilde{\pi}$.

Here is a result that can be proven in this way:
\begin{theo} \cite[Thm. 18]{HT09} \label{theo:cA1}
Let $\cX \ra B$ be a model of a smooth projective rationally connected variety $X$.  
Assume that for each place $b\in B$ of bad reduction, the singular fiber 
$\cX_b$ has the following properties:
\begin{itemize}
\item{$\cX_b$ has only ordinary double points;}
\item{$\cX_b^{sm}$ is strongly rationally connected;}
\item{if $\rho:\widetilde{\cX_b} \ra \cX_b$ is the blow up of the double points, then 
for each component $D$ of the exceptional locus $\mathrm{Exc}(\rho)$ there exists a rational curve in 
$\widetilde{\cX_b}$ meeting $D$ at one point transversally  and avoiding $\mathrm{Exc}(\rho)\setminus D$.}
\end{itemize}
Then $\cX \ra B$ satisfies weak approximation.
\end{theo}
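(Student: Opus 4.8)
The plan is to reduce Theorem~\ref{theo:cA1} to the good-reduction case (Theorem~\ref{theo:WAgood}, in its refined form via iterated blow-ups) by passing to a regular resolution and then carrying out, at each place of bad reduction, an iterated blow-up construction parallel to the one used in the inductive step of Theorem~\ref{theo:WAgood}.  Concretely, fix jet data $J=(N;b_1,\ldots,b_r;\hat s^N_1,\ldots,\hat s^N_r)$.  By Observation~\ref{obse2} we may work with a regular model; so let $\varrho:\widetilde\cX\ra\cX$ be a resolution of singularities that is an isomorphism over $\cX^{sm}$ and over all places of good reduction, and set $\widetilde\pi:\widetilde\cX\ra B$.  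Sections of $\widetilde\pi$ push forward to sections of $\pi$, and a section $s:B\ra\cX$ through a singular point of a fiber $\cX_b$ lifts to a section $\widetilde s$ meeting the exceptional locus $\mathrm{Exc}(\varrho)$ over that point.  Since we are free to choose which model we verify weak approximation on, it suffices to produce, for the regular model $\widetilde\cX\ra B$, sections realizing arbitrary jet data at arbitrary smooth points of the fibers (again Observation~\ref{obse2}).

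The key point is to build a \emph{nice section} of $\widetilde\pi$ in the sense of Proposition~\ref{prop:nicesection}, and this is where the three hypotheses on the bad fibers get used.  Away from bad reduction the argument is exactly Graber--Harris--Starr/Koll\'ar--Miyaoka--Mori; the new content is local at each $b$ of bad reduction.  First, Theorem~\ref{theo:GHS} gives a section $s_0:B\ra\cX$; after perturbing at good places we may assume $s_0$ avoids the finitely many singular points of the bad fibers, so it lifts to a section $\widetilde s_0:B\ra\widetilde\cX$ landing in $\widetilde\cX^{sm}$.  Next, at a bad place $b$ we want to attach, at $\widetilde s_0(b)$, a chain of rational curves in the fiber $\widetilde\cX_b$ that sweeps out enough directions.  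The second hypothesis (strong rational connectedness of $\cX_b^{sm}$) supplies very free curves through the smooth locus of $\cX_b$, hence through $\widetilde\cX_b$ away from the exceptional divisors; the third hypothesis (for each component $D$ of $\mathrm{Exc}(\rho)$ a rational curve meeting $D$ once transversally and missing the rest of $\mathrm{Exc}(\rho)$) lets us reach into each exceptional component.  Gluing a very free curve in $\cX_b^{sm}$ to such a ``probing'' curve produces, after smoothing (Lemma~\ref{lemm:smooth}), a section whose value over $b$ can be pushed to any prescribed smooth point of $\widetilde\cX_b$, including points on the exceptional divisors corresponding to sections of $\cX\ra B$ through the ordinary double points.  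The ordinary-double-point hypothesis controls the geometry precisely: the blow-up $\widetilde\cX_b$ is a smooth quadric hypersurface bundle over the double point, ruled by lines, exactly as the $\bP^d$'s appearing in~(\ref{eq:iterfiber}), so the chain-of-lines bookkeeping from the inductive step of Theorem~\ref{theo:WAgood} applies verbatim.

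With a nice section $\widetilde s$ of $\widetilde\pi$ in hand, the argument closes as follows.  Form the iterated blow-up $\beta^J:\widetilde\cX^J\ra\widetilde\cX$ associated to $J$ (Definition~\ref{defi:iterate}); by Proposition~\ref{prop:iterate} it suffices to find a section $\sigma:B\ra\widetilde\cX^J$ with $\sigma(b_i)=x^J_i$.  Take the proper transform of $\widetilde s$, then run the chain-of-rational-curves deformation of the inductive step of Theorem~\ref{theo:WAgood} simultaneously at each $b_i$: at a place of good reduction one uses very free curves inside the smooth fiber directly; at a place of bad reduction one uses the very free curves in $\cX_{b_i}^{sm}$ together with the probing curves of hypothesis three to connect $\widetilde s(b_i)$ to the required point $x^J_i$ through the chain of ruled exceptional $\bP^d$'s.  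The resulting reducible curve $C=\widetilde s(B)\cup(\text{chains})$ has $N_{C/\widetilde\cX^J}\otimes\cI_{x^J_1,\ldots,x^J_r}$ globally generated with vanishing $H^1$ (the very free components carry enough positivity, exactly as in~\cite[II.7.5]{Kolbook}), so Lemma~\ref{lemm:smooth} smooths it to the desired section.

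\textbf{Main obstacle.}  The hard part is the local analysis at a bad place: ensuring that the attached chain of rational curves in $\widetilde\cX_b$ can be chosen to (a) keep the normal bundle of the glued curve sufficiently positive after imposing the point condition at $b$, and (b) actually reach an arbitrary prescribed smooth point of $\widetilde\cX_b$, including points lying on the exceptional divisors.  Part (b) is precisely what hypothesis three is engineered to guarantee, but one must check the transversality and disjointness conditions survive the gluing and that the probing curve can be deformed to hit a \emph{general} point of its target component; this is a cohomology computation on the reducible curve of the same flavor as the $N_{C/\widetilde Y}|\ell\simeq\cO_{\bP^1}(1)^{\oplus\dim(Y)-1}$ calculation in the proof of Proposition~\ref{prop:thrujets}, where the negativity of $N_{E/\widetilde Y}|\ell$ is overcome by the nodes.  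Keeping that bookkeeping uniform across all the exceptional components simultaneously, and compatible with the good-reduction chains, is the technical heart of the proof.
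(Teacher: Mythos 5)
A preliminary remark on the comparison itself: the survey does not actually prove Theorem~\ref{theo:cA1} --- it is quoted from \cite{HT09}, and the text only indicates the strategy in the preceding paragraph (resolve the total space, lift formal sections to the resolution, construct sections meeting prescribed exceptional divisors, which ``requires precise control over the rational curves in various homology classes of the fibers''). Your proposal follows exactly that indicated route, so the question is whether your sketch supplies the missing control, and it does not: the two load-bearing steps are asserted rather than carried out.

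First, you conflate the blow-up $\widetilde{\cX_b}=\Bl(\cX_b)$ of the \emph{fiber} (which is what hypothesis three concerns; its exceptional components $D$ are $(d-1)$-dimensional quadrics) with the special fiber of a resolution $\varrho:\widetilde{\cX}\ra\cX$ of the \emph{total space}. The hypotheses constrain only the fiber, not the local analytic type of $\cX$ along the double points, and the theorem has content precisely when $\cX$ is non-regular there: if $\cX$ were regular, every formal section would factor through $\cX^{sm}$, and Theorem~\ref{theo:WASRC} together with hypothesis two already gives the conclusion, with hypothesis three never used. So the components one must reach are the $\varrho$-exceptional divisors and the proper transform of $\cX_b$, with their multiplicities in the special fiber to track; they are not ``exactly as the $\bP^d$'s appearing in~(\ref{eq:iterfiber})'', and the chain-of-lines bookkeeping of the inductive step of Theorem~\ref{theo:WAgood} does not apply ``verbatim''. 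What must be shown is that a curve in $\widetilde{\cX_b}$ meeting $D$ once transversally and missing $\mathrm{Exc}(\rho)\setminus D$ yields, inside the resolved model, a vertical curve class with intersection number one against the corresponding $\varrho$-exceptional component and zero against the others, so that combs containing it can deform to sections through prescribed points of that component; this intersection-theoretic point is the crux (it is exactly what fails for the Cayley cubic, where $[\mathrm{Exc}(\rho)]$ is divisible by two). Second, the smoothing step is deferred rather than done: the teeth of your comb that lie inside a $\varrho$-exceptional divisor $E$ have normal bundles in $\widetilde{\cX}^J$ with negative summands coming from $N_{E/\widetilde{\cX}}$, and the probing curve of hypothesis three is merely some rational curve --- not a priori free, and not through the attachment points you need --- so it must first be combined with hypothesis two and deformed before it can be glued. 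Proving that $N_{C/\widetilde{\cX}^J}\otimes\cI_{x^J_1,\ldots,x^J_r}$ is globally generated with vanishing $H^1$ therefore requires a component-by-component computation in the style of the display in the proof of Proposition~\ref{prop:thrujets}, checking that the node contributions overcome this negativity uniformly along the whole chain; you explicitly flag this as the ``main obstacle'' and leave it open. Since that computation, together with the first point, is the actual content of the theorem, your text is a correct outline of the intended strategy but not a proof.
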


Unfortunately, this approach is not robust enough to fully prove weak approximation, even
for cubic surfaces.
\begin{exam}
Consider the Cayley cubic surface
$$Y=\{wxy+xyz+yzw+zwx=0 \} \subset \bP^3,$$
which has ordinary double points at $[1,0,0,0]$,$[0,1,0,0]$,$[0,0,1,0]$, and $[0,0,0,1]$.  Let
$\rho:\widetilde{Y} \ra Y$ denote the minimal resolution with
$$\mathrm{Exc}(\rho)=D_1 \sqcup D_2 \sqcup D_3 \sqcup D_4, \quad D_j \simeq \bP^1, D_j\cdot D_j =-2.$$
We may interpret $\widetilde{Y}$ as a blow-up of $\bP^2$:  Choose four lines $L_1,L_2,L_3,L_4 \subset \bP^2$
in general position, with intersections $p_{ij}=L_i \cap L_j$.  Then 
$\widetilde{Y}\simeq \Bl_{p_{12},\ldots,p_{34}}(\bP^2)$ with exceptional curves $E_{12},\ldots,E_{34}$.  
Furthermore, the proper transform of $L_j$ equals $D_j$.  If $L$ is the pullback of the hyperplane class
of $\bP^2$ to $\widetilde{Y}$ then $D_j=L-E_{ja}-E_{jb}-E_{jc}$ where $\{j,a,b,c\}=\{1,2,3,4\}.$
Thus we have
$$[\mathrm{Exc}(\rho)]=[D_1+D_2+D_3+D_4]=4L-2(E_{12}+\cdots +E_{34}),$$
which is divisible by two.  This is incompatible with the last assumption of Theorem~\ref{theo:cA1}.
\end{exam}

\section{Weak approximation and rationally simply connected varieties}
\label{sect:RSC}
There are large classes of varieties where weak approximation can be proven at all places.  
Here we focus on the 
`rationally simply connected varieties' introduced by Barry Mazur and
studied systematically by de Jong and Starr \cite{dJS}.  
\paragraph{An easier argument under strong assumptions}
\begin{theo} \label{theo:WARSC1}
Let $X$ be a smooth projective variety over $F=k(B)$.  Assume that for each $m\ge 1$,
there exists a class $\beta \in H_2(X_{\bC},\bZ)$ and 
an irreducible component of the space of Kontsevich stable maps
$$M \subset \ocM_{0,m}(X,\beta),$$
with the following properties:
\begin{itemize}
\item{$M$ is defined and absolutely irreducible over $F$;}
\item{a general point of $M$ parametrizes a smooth immersed curve;}
\item{the evaluation morphism
$$\begin{array}{rcl}
\ev: M & \ra & X^m \\
\{f:(C,p_1,\ldots,p_m)\ra X \} & \mapsto & (f(p_1),\ldots,f(p_m)) 
\end{array}
$$
is dominant with rationally connected generic fiber.}
\end{itemize}
Then $X$ satisfies weak approximation.
\end{theo}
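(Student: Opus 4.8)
The plan is to reduce to Observation~\ref{obse2}: fixing a good model $\cX \to B$ and distinct places $b_1,\ldots,b_r$ of good reduction with prescribed smooth points $x_i \in \cX_{b_i}$, we must produce a section $s : B \to \cX$ with $s(b_i) = x_i$ (by Theorem~\ref{theo:WAgood} we already have weak approximation at the remaining, bad, places, and more importantly the hypotheses here are designed to handle all places uniformly via stable maps on $X$ itself rather than on fibers). So set $m = r$ and let $M \subset \ocM_{0,m}(X,\beta)$ be the component supplied by the hypothesis, defined over $F$, with dominant evaluation $\ev : M \to X^m$ having rationally connected generic fibers. The strategy is: (1) use the section coming from Graber--Harris--Starr-type input together with the prescribed points to single out a point of $X^m(\widehat F_{b_i})$ at each place; (2) spread $M$ and $\ev$ out to a family over $B$; (3) apply weak approximation recursively — first to $M$, or rather to the relevant fibers of $\ev$ — to get a section of $M \to B$ evaluating correctly; (4) read off from that section a family of stable maps whose fibers glue with the model to produce the desired section of $\cX \to B$.

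Concretely, I would argue as follows. The key geometric point is that a rational curve $C \subset X_{\widehat F_{b_i}}$ through a point $x_i$, together with a point on the special fiber, can be smoothed to a section meeting $x_i$ by the deformation machinery of Lemma~\ref{lemm:smooth} — this is exactly the mechanism of the Koll\'ar--Miyaoka--Mori argument sketched above. So the real content is to produce, over $F$, a \emph{single} rational curve $f : \bP^1 \to X$ (an $F$-morphism, or a stable map close to it) passing appropriately through the requisite jets, and then to invoke weak approximation at good-reduction places for such curves. To get this $F$-rational curve: form the fiber $M_x = \ev^{-1}(x_1,\ldots,x_m)$ over a suitable $F$-point of $X^m$; by hypothesis this fiber is rationally connected, hence by Graber--Harris--Starr (Theorem~\ref{theo:GHS}) it has an $F$-rational point, i.e.\ there is an $F$-stable-map through the prescribed points. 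Since a general point of $M$ parametrizes a smooth immersed curve, after a small deformation (staying over $F$, using that $M_x$ is rationally connected so we can move within it) we may take $f : \bP^1 \to X$ to be immersed. Then I would run the iterated-blow-up reduction of Observation~\ref{obse2}/Proposition~\ref{prop:iterate} on a good model and attach such rational curves at each $b_i$, exactly as in the inductive step of Theorem~\ref{theo:WAgood}, smoothing the resulting comb/chain to a section with the prescribed jet data.

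The step I expect to be the main obstacle — and the reason this theorem carries the caveat ``easier argument under strong assumptions'' — is \emph{controlling the evaluation fibers over the local fields $\widehat F_{b_i}$ and matching the curve's incidence with the model $\cX$}. We need not just that $f$ passes through $x_i \in \cX_{b_i}$, but that the whole family degenerates compatibly: the stable map $f$ over $F$ must specialize over each $b_i$ to something whose image meets the section's value with the right tangency/jet, so that Lemma~\ref{lemm:smooth}'s hypotheses ($N_f \otimes \cI$ globally generated, no higher cohomology) are genuinely verified on the nodal curve. Establishing the cohomological vanishing forces us to attach \emph{very free} curves (from strong rational connectedness of the fibers, or from the hypothesis that general members of $M$ are nicely immersed) at auxiliary points, and to check that the normal sheaf of the glued configuration behaves well at the nodes — this is the same delicate bookkeeping as in \cite[II.7.5]{Kolbook} and the GHS normal-bundle lemma, now complicated by the presence of the bad fibers. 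The strong hypotheses (absolute irreducibility over $F$, rationally connected evaluation fibers \emph{for every} $m$) are precisely what let us sidestep the harder $\ocM_{0,2}$-and-twisting analysis needed in the general rationally simply connected case, reducing everything to one application of Graber--Harris--Starr plus the now-standard smoothing argument.
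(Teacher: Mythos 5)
Your reduction misses the central device of the paper's proof: the integer $m$ is \emph{not} the number of prescribed places. The points $x_i$ lie in special fibers $\cX_{b_i}$, so the tuple $(x_1,\ldots,x_r)$ is not an $F$-point of $X^r$, and the fiber $\ev^{-1}(x_1,\ldots,x_m)$ you propose to form ``over a suitable $F$-point of $X^m$'' has no meaning over $F$; the gesture at singling out points of $X^m(\widehat{F}_{b_i})$ at each place does not repair this. What the paper does is choose an auxiliary multisection $\cZ \subset \cX$ (a general one-dimensional complete intersection in a projective embedding of $\cX$) passing through $x_1,\ldots,x_r$ and unramified over $B$ at those points; its generic fiber is a length-$m$ zero-cycle $Z \subset X$ defined over $F$, where $m=\deg(\cZ/B)$ is dictated by the construction --- this is exactly why the hypothesis is required for every $m\ge 1$. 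One then sets $Y=\ev^{-1}(Z)$. Moreover, the hypothesis only gives rational connectedness of the \emph{generic} fiber of $\ev$; in your framing the evaluation point is pinned by the prescribed data and cannot be moved, whereas the multisection can be deformed (attaching fibral free curves as in Proposition~\ref{prop:nicesection}) so that $Z$ avoids the bad locus $\Delta'\subsetneq X^m$ while $\cZ$ still contains $x_1,\ldots,x_r$; this is how genericity and the prescribed points are reconciled, and you do not address it.

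Second, your endgame reverts to comb-smoothing via Lemma~\ref{lemm:smooth} at the places $b_i$, which may be places of bad reduction; you yourself flag this as the main unresolved obstacle, and it is precisely the difficulty the theorem is designed to bypass. In the paper, once Theorem~\ref{theo:GHS} (together with Koll\'ar--Miyaoka--Mori, to move within the Zariski-dense family of sections of a model $\cY\to B$ of $Y$) produces an $F$-point of $Y$, i.e.\ an immersed rational curve $R\subset X$ over $F$ containing $Z$, the closure $\cR\subset\cX$ is a ruled surface containing $x_1,\ldots,x_r$; after an embedded resolution it is smooth over $B$ at the preimages of these points, because the multisection provides formal sections through them. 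The desired section then comes from weak approximation in dimension one applied to $\cR\to B$, whose generic fiber is a rational curve (Corollary~\ref{coro:rational}, Corollary~\ref{coro:conic}), with no normal-bundle analysis at bad fibers at all. Without the multisection and this reduction to a ruled surface, your argument does not close.
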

The key hypothesis is known to hold for
\begin{enumerate}
\item{$X \subset \bP^n$ a complete intersection of degrees $(d_1,\ldots,d_r)$
with $d_1\ge d_2\ge \ldots \ge d_r \ge 2$, provided
$n+1 \ge \sum_{i=1}^r (2d_i^2-d_i)$ and $\deg(\beta) \ge 4m-6$ \cite[1.2]{dJS}.}
\item{$X\subset \bP^n$ is a general hypersurface of degree $d$, provided
$n \ge d^2$ and $\deg(\beta) \gg 0$ \cite[1.3]{dJS}\cite{Starr06}.}
\end{enumerate}
These classes of 
varieties are said to be {\em strongly rationally simply connected}.  
\begin{coro}
Let $X\subset \bP^n$ be a smooth complete intersection of type $(d_1,\ldots,d_r)$ over $F$.
If 
$n+1 \ge \sum_{i=1}^r (2d_i^2-d_i)$ then $X$ satisfies weak approximation.  
\end{coro}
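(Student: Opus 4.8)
The plan is to deduce the Corollary directly from Theorem~\ref{theo:WARSC1} by verifying its hypotheses for smooth complete intersections of the stated type. First I would invoke the work of de Jong--Starr \cite{dJS}, item (1) in the list following Theorem~\ref{theo:WARSC1}: when $n+1 \ge \sum_{i=1}^r(2d_i^2-d_i)$ and $\deg(\beta) \ge 4m-6$, there is an irreducible component $M \subset \ocM_{0,m}(X,\beta)$ that is absolutely irreducible over $F$, whose general point parametrizes a smooth immersed rational curve, and whose evaluation morphism $\ev\colon M \to X^m$ is dominant with rationally connected generic fiber. Since for each fixed $m$ we may choose $\beta$ with $\deg(\beta)$ arbitrarily large (in particular $\ge 4m-6$), the hypothesis of Theorem~\ref{theo:WARSC1} is satisfied for every $m \ge 1$. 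The Corollary then follows immediately.

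The only genuine subtlety — and the step I would expect to need the most care — is the descent of the relevant irreducible component $M$ from the geometric setting to the field $F = k(B)$, i.e.~checking that $M$ is defined and \emph{absolutely} irreducible over $F$ rather than merely over an algebraic closure. This is where one uses that $X$ itself is defined over $F$ together with the canonical (monodromy-free) nature of the de Jong--Starr construction: the component is distinguished by a geometric property stable under the Galois action (it is the component through a general very-twisting or generically smooth curve of the given degree), so it is Galois-invariant and hence defined over $F$; absolute irreducibility is part of the output of \cite{dJS}. I would cite \cite[1.2]{dJS} for this rather than reproving it.

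One should also note that there is no rationality hypothesis on $X$ here: Theorem~\ref{theo:WARSC1} supplies weak approximation at \emph{all} places, not merely at places of good reduction, because the argument proceeds by constructing sections of a suitable model via the rationally connected fibers of $\ev$ and the Graber--Harris--Starr theorem (Theorem~\ref{theo:GHS}) applied to those fibers over $F$, rather than by analyzing singular fibers of a model. Thus no case analysis of bad reduction is required; the degree bound $n+1 \ge \sum_{i=1}^r(2d_i^2-d_i)$ is precisely what makes $X$ strongly rationally simply connected and thereby feeds Theorem~\ref{theo:WARSC1}. Assembling these observations completes the proof.
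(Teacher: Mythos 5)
Your deduction is logically sound and follows the reduction the paper itself sets up: Theorem~\ref{theo:WARSC1}, together with the verification via \cite[1.2]{dJS} that a smooth complete intersection with $n+1 \ge \sum_{i=1}^r(2d_i^2-d_i)$ carries, for every $m\ge 1$ (choosing $\deg(\beta)\ge 4m-6$), a distinguished component of $\ocM_{0,m}(X,\beta)$ defined over $F$ whose evaluation map is dominant with rationally connected generic fiber; your remarks on descent to $F$ and on getting approximation at every place are also on target. The difference from the paper is one of where the work is placed rather than of strategy: the proof printed under this corollary is not a citation of Theorem~\ref{theo:WARSC1} but the actual argument establishing it in this setting --- the reduction of weak approximation to hitting prescribed smooth points $x_1,\ldots,x_r$ of an arbitrary regular model (Observation~\ref{obse2}); the choice of a multisection $\cZ\subset\cX$ through those points, cutting a length-$m$ cycle $Z\subset X$ made general enough (by attaching fibral very free curves, as in Proposition~\ref{prop:nicesection}) that $Y=\ev^{-1}(Z)$ is rationally connected with generic point a smooth immersed curve; an application of Theorems~\ref{theo:GHS} and~\ref{theo:KMM} to a model of $Y$ to produce an immersed rational curve $R\supset Z$; and the passage to the ruled surface $\cR=\overline{R}\subset\cX$, where after an embedded resolution one concludes by weak approximation in dimension one. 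So your proposal buys brevity by black-boxing the theorem, which is legitimate since the theorem is stated before the corollary, but since the paper's only proof of Theorem~\ref{theo:WARSC1} is precisely this text, a self-contained write-up would have to include, or explicitly defer to, the multisection/ruled-surface construction. One small point both you and the paper gloss over: \cite[1.2]{dJS} assumes all $d_i\ge 2$, so linear forms should first be eliminated (each such form reduces $n+1$ by one while contributing $2d_i^2-d_i=1$ to the bound, so the inequality is preserved), the purely linear case being immediate from Corollary~\ref{coro:rational}.
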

\begin{proof}
We apply the iterated blow-up construction, as formulated in
Observation~\ref{obse2}. 
Suppose $\cX \ra B$ is a regular projective model of $X$,
$b_1,\ldots,b_r \in B$ distinct points, and $x_i \in \cX_{b_i}^{sm}, i=1,\ldots,r$.
We produce a section $\sigma:B\ra \cX$ with 
$\sigma(b_i)=x_i$ for $i=1,\ldots,r$.  

There exists a multisection 
$$\begin{array}{rcl}
\cZ & \subset & \cX \\
    & \searrow & \downarrow \\
    &	 	& B 
\end{array}
$$
such that 
$x_1,\ldots,x_r \in \cZ$ and $\cZ \ra B$ is
unramified at those points.  Indeed, embed $\cX \subset \bP^N$
and take a general one-dimensional complete intersection containing
$x_1,\ldots,x_r$.  Set $m=\deg(\cZ/B)$ and let $Z\subset X$
denote the corresponding zero-cycle of length $m$.  

Suppose, for the moment, that $Y:=\ev^{-1}(Z)$ is rationally connected with generic point
corresponding to a stable map with the following properties:
\begin{itemize}
\item{the domain is $\bP^1$;}
\item{the mapping is an immersion;}
\item{at the marked points the mapping is an embedding, i.e., at these points
it is an isomorphism onto its image.}
\end{itemize}
Let $\cY \ra B$ be a model of $Y$.
The Graber-Harris-Starr and Koll\'ar-Miyaoka-Mori theorems
give a Zariski-dense collection of sections $\rho:B \ra \cY$.  
Given any subvariety $\Delta \subsetneq Y$, we may assume $\rho(B)$ avoids $\Delta$
in the generic fiber.
In particular, $\rho$
corresponds to an immersed rational curve 
$$Z\subset R \subset X$$
with the properties prescribed above.

Let $\cR\subset \cX$ denote the closure of $R$, a ruled surface
containing $\cZ$
and thus $x_1,\ldots,x_r$.  Let $\psi:\widetilde{\cX} \ra \cX$ be an embedded resolution
of singularities for $\cR$, with proper transform $\widetilde{\cR} \subset \cX$.  
Since $R$ is smooth along $Z$, $\psi$ is an isomorphism over $Z$;
let $\widetilde{\cZ} \subset \widetilde{\cR}$
denote the closure of $Z$.  Now $\widetilde{\cZ} \ra \cZ$ is
an isomorphism over the points $\{x_1,\ldots,x_r\}$;
let $x'_1,\ldots,x'_r\in \widetilde{\cZ}$ denote their pre-images.  
Furthermore, $\widetilde{\cR} \ra B$ is smooth at these points, because
there is a formal section through each $x'_j$, e.g., $\widetilde{\cZ}$.
(Recall that $\widetilde{\cZ}\ra B$ is unramified at $x'_j$.)  
Weak approximation holds for rational curves, 
thus there is a section $\tilde{\sigma}:B\ra \widetilde{\cR}$ with $\tilde{\sigma}(b_i)=x'_i$;
$\sigma=\psi\circ \tilde{\sigma}$ is our desired section.  

\begin{figure}
\begin{center}
\includegraphics{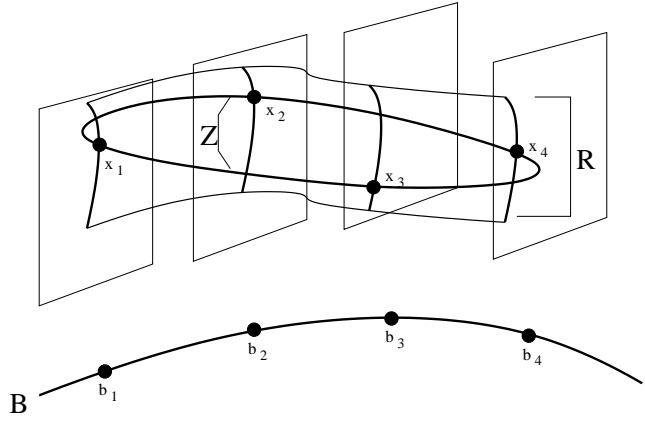}
\end{center}
\caption{Multisection/ruled surface construction}
\end{figure}

We now illustrate how to ensure that $Y=\ev^{-1}(Z)$ is rationally connected with generic point
corresponding to a smooth rational with an immersion that is an
embedding at the marked points.  There exists a closed subset $\Delta' \subsetneq X^m$
where one or more of these conditions fails.  We need to choose $Z$ so as to avoid this subset.  
However, after attaching a suitably large number of fibral very free curves
to $Z$ (cf. Proposition~\ref{prop:nicesection} and \cite[\S 2]{GHS}),
we can find a degree $m$ cycle $Z'\subset X$ 
avoiding $\Delta'$ but still containing $x_1,\ldots,x_m$.  Indeed, let $b\in B$ denote a general
point with $z_1,\ldots,z_m \in \cZ_b$.  Then there points $z'_1,\ldots,z'_q \in \cZ$ and free
curves $f_i:\bP^1 \ra \cX_{\pi(z'_i)}$ such that the comb
$$C=Z \cup_{z'_1} f_1(\bP^1) \cup \cdots \cup_{z'_q} f_q(\bP^1)$$
has $N_{C/\cX}\otimes \cI_{z_1,\ldots,z_m,x_1,\ldots,x_r}$ globally generated
with no higher cohomology.  The general
deformations of $C$ through $x_1,\ldots,x_r$ is a multisection meeting $X$ in a general cycle of length $m$. 
\end{proof}

\paragraph{A harder argument under weaker assumptions}
We are grateful to Jason Starr for suggesting this argument.  

Before we state our result, we review some facts on
mappings from rational curves to smooth varieties, following
\cite[\S 3]{dJS}.  

We shall use the following basic fact repeatedly:  Let $W$ be an
integral scheme of finite type over a field $F$ of characteristic zero.
Suppose there exists a geometrically connected nonempty subscheme $V\subset W$
defined over $F$, along which $W$ is normal.  Then $W$ is geometrically integral.
Indeed, all the geometric irreducible components of $W$ meet along $V$;
$W$ would fail to be normal along $V$ if there were more than one.

Let $Y$ be a smooth proper variety over an algebraically closed
field of characteristic zero. 
\begin{prop}
Fix a free curve in $Y$ and let
$$\ocM'_{0,0}(Y,\beta) \subset \ocM_{0,0}(Y,\beta)$$
denote the unique irreducible component of the stable map space 
containing this curve.  Then for each $m\ge 1$ there is a distinguished irreducible
component
$$\ocM'_{0,0}(Y,m\beta) \subset \ocM_{0,0}(Y,m\beta)$$
characterized by the following property:
For each finite morphism $g:\bP^1 \ra \bP^1$ of degree $m>0$ and 
free $\{f:\bP^1 \ra Y \} \in \ocM'_{0,0}(Y,\beta)$, the composition
$$\{(f\circ g):\bP^1 \ra Y \} \in \ocM'_{0,0}(Y,m\beta).$$
\end{prop}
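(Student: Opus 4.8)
The plan is to produce $\ocM'_{0,0}(Y,m\beta)$ as the unique irreducible component of $\ocM_{0,0}(Y,m\beta)$ meeting the family of all compositions $f\circ g$, and to use the unobstructedness of free curves to see that this component is well defined and is characterized by the stated property. First I would record the relevant deformation theory. If $\{h:\bP^1\ra Y\}$ is free of class $\gamma$, then $h^*T_Y\simeq\oplus_i\cO_{\bP^1}(a_i)$ with all $a_i\ge 0$, so $H^1(\bP^1,h^*T_Y)=0$; since $h$ is non-constant the sheaf map $T_{\bP^1}\ra h^*T_Y$ is injective with cokernel the normal sheaf $N_h$, and the sequence $0\ra T_{\bP^1}\ra h^*T_Y\ra N_h\ra 0$ exhibits $H^1(\bP^1,N_h)$ as a quotient of $H^1(\bP^1,h^*T_Y)=0$. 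Hence the obstruction space to $\ocM_{0,0}(Y,\gamma)$ at $\{h\}$ vanishes, $\{h\}$ is a smooth point, and it lies on a unique irreducible component. Applied to the fixed free curve this makes $\ocM'_{0,0}(Y,\beta)$ well defined; applied to a composition $f\circ g$ — which is again free, since $(f\circ g)^*T_Y\simeq\oplus_i\cO_{\bP^1}(m a_i)$ with $m a_i\ge 0$ — it shows every such composition lies on exactly one component of $\ocM_{0,0}(Y,m\beta)$. It therefore suffices to show that all compositions $f\circ g$, with $\{f\}$ free in $\ocM'_{0,0}(Y,\beta)$ and $g:\bP^1\ra\bP^1$ finite of degree $m$, lie on one and the same component.

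For this I would form an irreducible parameter space. The locus of free curves in $\ocM'_{0,0}(Y,\beta)$ is open and dense, hence irreducible; the scheme of \emph{parametrized} free maps $\bP^1\ra Y$ of class $\beta$ whose associated stable map lies in $\ocM'_{0,0}(Y,\beta)$ is a flat family with irreducible ($\mathrm{PGL}_2$-homogeneous) fibers of constant dimension over this irreducible locus, hence is itself irreducible — call it $\mathrm{Mor}'$ — and it dominates $\ocM'_{0,0}(Y,\beta)$. The scheme $\mathrm{Mor}_m(\bP^1,\bP^1)$ of degree-$m$ self-maps is open in a projective space, hence irreducible, so $\mathrm{Mor}'\times\mathrm{Mor}_m(\bP^1,\bP^1)$ is irreducible. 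Composition defines a morphism $\mathrm{Mor}'\times\mathrm{Mor}_m(\bP^1,\bP^1)\ra\ocM_{0,0}(Y,m\beta)$, $(f,g)\mapsto\{f\circ g\}$, and its image — being irreducible — is contained in a single irreducible component; this is the component we name $\ocM'_{0,0}(Y,m\beta)$. It has the asserted property: for any free $\{f\}\in\ocM'_{0,0}(Y,\beta)$ and any degree-$m$ cover $g$, the point $\{f\circ g\}$ lies in the above image, hence in $\ocM'_{0,0}(Y,m\beta)$, and by the first paragraph it lies on no other component; so the stated property determines $\ocM'_{0,0}(Y,m\beta)$ uniquely.

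The one point requiring a little care is the construction of $\mathrm{Mor}'$ as a genuine family — that is, turning $(f,g)\mapsto f\circ g$ into a morphism of moduli spaces — which requires identifying the domains of the curves in the family with a fixed copy of $\bP^1$; this is routine, handled by passing if necessary to a dominant map from an irreducible scheme (for instance an open subset of the universal curve over $\ocM'_{0,0}(Y,\beta)$) over which the universal $\bP^1$-bundle becomes trivial. Everything else is the soft observation that an irreducible subset of a scheme lies on a unique irreducible component, together with the vanishing of $H^1(\bP^1,N_h)$ for free $h$.
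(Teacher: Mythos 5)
Your proposal is correct and follows essentially the same route as the paper's proof: smoothness of the stable map space at free curves (via vanishing of $H^1$ of the normal sheaf, a quotient of $h^*T_Y$) gives uniqueness of the component through any composition $f\circ g$, the computation $(f\circ g)^*T_Y\simeq\oplus_j\cO_{\bP^1}(ma_j)$ shows these compositions stay free, and irreducibility of the parameter space of such compositions forces them all into one distinguished component. Your construction of $\mathrm{Mor}'$ is simply a more explicit version of the paper's irreducible scheme $R_m$, so there is nothing further to add.
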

We write $\ocM'_{0,n}(Y,m\beta)$ for the corresponding irreducible
component of the pointed moduli space.  
\begin{proof}
First, the uniqueness of $\ocM'_{0,0}(Y,\beta)$ is a consequence of the
fact that the stable map space is smooth at a free rational curve, because
the normal bundle is a quotient of the pull-back of $T_Y$.
Fix $m \ge 1$.  
The space of all degree $m$ morphisms $\bP^1 \ra \bP^1$ is irreducible,
so there is an irreducible scheme $R_m$ parametrizing the composed maps $f \circ g$
described above.  Each
$f \circ g:\bP^1 \ra Y$ remains free;
indeed, if $f^*T_Y \simeq \oplus_{j=1}^r \cO_{\bP^1}(a_j)$ then
$(f\circ g)^*T_Y \simeq \oplus_{j=1}^r \cO_{\bP^1}(ma_j)$.
Hence $\ocM_{0,0}(Y,m\beta)$ is smooth along $R_m$, and thus admits a distinguished
irreducible component containing $R_m$.  
\end{proof}
\begin{rema}
Suppose $f:\bP^1 \ra Y$ is as described above, but defined over an arbitrary
field $F$ of characteristic zero.  Then $\ocM'_{0,0}(Y,m\beta)$ is defined over $F$ as well.
\end{rema}

\begin{defi}
Let $C$ be a nodal connected curve of genus zero and $Y$ a smooth proper variety.
A morphism $h:C \ra Y$ is {\em admissible} if, for each irreducible $C_i \subset C$,
the restriction $h|C_i$ is either constant or a free immersion.  
\end{defi}
Each non-constant admissible morphism admits a deformation to a free curve
\cite[II.7.6]{Kolbook};
the moduli stack of stable maps is smooth 
at such points
(see the argument of \cite[5.2]{FP}).

\begin{prop}  \label{prop:prep}
Let $\ocM'_{0,0}(Y,\beta)$ denote an irreducible component of the
stable map space containing a free curve, 
such that the evaluation map
$$\ev:\ocM'_{0,1}(Y,\beta) \ra Y$$
is dominant with irreducible geometric generic fiber.  
Let $h:C \ra Y$ be an admissible stable map such that, for each irreducible
component $C_i \subset C$ with $h|C_i$ non-constant, the restriction
$h_i:=h|C_i:\bP^1 \ra Y$ is in $\ocM'_{0,0}(Y,\beta)$.
Then
\begin{itemize}
\item{$h \in \ocM'_{0,0}(Y,m\beta)$, where $m$ is the number of non-contracted components;}
\item{the evaluation map
$$\ev_m:\ocM'_{0,1}(Y,m\beta) \ra Y$$
also is dominant with irreducible geometric generic fiber.}
\end{itemize}
\end{prop}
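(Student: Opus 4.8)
The plan is to prove the two assertions of Proposition~\ref{prop:prep} together, by induction on $m$, the number of non-contracted components of $C$. The base case $m=1$ is essentially the hypothesis: if $C$ has one non-contracted component then, up to contracting the trees of constant components, $h$ is an admissible map from a single $\bP^1$, hence a free immersion in $\ocM'_{0,0}(Y,\beta)$ by assumption, and its class is $\beta$; the evaluation statement for $m=1$ is precisely the hypothesis on $\ev$.

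For the inductive step, I would proceed as follows. Given $h:C\ra Y$ with $m$ non-contracted components, pick a leaf component $C_i\subset C$ (one meeting the rest of $C$ in a single node $p$) with $h|C_i$ non-constant, and let $C'\subset C$ be the closure of the complement, with induced admissible map $h':C'\ra Y$ having $m-1$ non-contracted components. By induction, $h'\in\ocM'_{0,0}(Y,(m-1)\beta)$. Now I want to smooth the node $p$ joining $C_i$ to $C'$ while keeping everything inside the appropriate component. The key point: an admissible stable map admits a deformation to a free curve (this is \cite[II.7.6]{Kolbook}, cited just above the statement), and the moduli stack is smooth at admissible points; so $\ocM_{0,0}(Y,m\beta)$ is smooth along the locus of such $h$, and the component through $h$ is well-defined. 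To identify it with $\ocM'_{0,0}(Y,m\beta)$, I would exhibit a path in moduli from $h$ to a standard degeneration $f\circ g$ with $g:\bP^1\ra\bP^1$ of degree $m$ and $f$ free: deform $h'$ within $\ocM'_{0,0}(Y,(m-1)\beta)$ to a composition $f\circ g'$ with $\deg g' = m-1$, deform $h|C_i$ within $\ocM'_{0,0}(Y,\beta)$ to the same $f$ (using that $\ev$ is dominant, so we can arrange $h|C_i$ and $f\circ g'$ to agree at the node after deformation), and then smooth the node to realize $f\circ g$ for a suitable $g$ of degree $m$; this shows $h$ lies on $\ocM'_{0,0}(Y,m\beta)$.

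For the evaluation statement, I would argue that $\ev_m:\ocM'_{0,1}(Y,m\beta)\ra Y$ is dominant (clear, since $\ev_1$ is and one can attach the extra components at a movable point), and then that the geometric generic fiber is irreducible using the normality trick recalled just before the proposition: it suffices to produce, over a general $y\in Y$, a geometrically connected subscheme $V$ of the fiber $\ev_m^{-1}(y)$ along which the fiber is normal. I would take $V$ to be (the closure of) the locus of admissible maps $h$ with $m$ non-contracted components all lying in $\ocM'_{0,0}(Y,\beta)$, with the marked point on one distinguished component mapping to $y$. Smoothness of the stable map stack at admissible points gives normality of $\ev_m^{-1}(y)$ along $V$; geometric connectedness of $V$ follows by an inductive fiber-bundle description of $V$ (attach each successive free component, with attaching point ranging over an irreducible fiber of $\ev_1^{-1}$, which is irreducible by hypothesis and induction). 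Geometric integrality of $\ev_m^{-1}(y)$ follows.

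\textbf{The main obstacle} I anticipate is the bookkeeping in the normality/connectedness argument for the fibers: one must carefully set up the parameter space $V$ of admissible combs over a general point $y$, verify it is geometrically connected by peeling off one free component at a time (invoking the inductive irreducibility of $\ev_{m-1}^{-1}$ and the hypothesis that $\ev_1$ has irreducible geometric generic fiber), and confirm that $\ev_m^{-1}(y)$ is genuinely normal along $V$ rather than merely smooth at its generic point — this uses that admissible stable maps are smooth points of the moduli stack, so the relevant deformation spaces are unobstructed and the fiber, cut out by imposing the incidence condition at the marked point on a smooth ambient space, inherits normality along $V$. The identification $h\in\ocM'_{0,0}(Y,m\beta)$ via the degeneration to $f\circ g$ is comparatively routine once the smoothness of the stable map stack along admissible loci is in hand.
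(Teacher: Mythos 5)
Your overall skeleton---induction on $m$, smoothing a one-node degeneration onto branched covers of a single free curve, and deducing irreducibility of the generic fiber from a geometrically connected stratum along which the space is normal---is the same as the paper's. But two steps, as you describe them, have genuine gaps. First, in identifying the component through $h$: dominance of $\ev$ is not what lets you deform $h|C_i$ and $h'$ ``to agree at the node.'' What is needed is that the whole locus of one-node gluings---a point of $\ev_1^{-1}(y)$ attached at its marked point to a point of $\ev_{m-1}^{-1}(y)$, with $y$ varying in a dense open $U\subset Y$---is irreducible; this uses precisely the inductive hypothesis that the fibers of $\ev_n$ over points of $U$ are irreducible for $n<m$, together with smoothness of the stable-map stack at these admissible glued maps, after first arranging (by a small deformation of $h$, which is a smooth point) that the nodes of $C$ map into $U$. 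Only then do $h$ (take the two pieces to be $h|C_i$ and $h|C'$) and a gluing of two branched covers of a common free curve $f$ through $y$ lie in a single irreducible component, and the irreducibility of the space of degree $m$ covers $\bP^1\ra\bP^1$ finishes the identification with $\ocM'_{0,0}(Y,m\beta)$.

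Second, and more seriously, your connectedness argument for $V$ in the fiber-irreducibility step does not work as stated: when you ``attach each successive free component,'' the attaching point lies on the image of the partially built curve, a one-dimensional and hence non-general locus in $Y$; the hypothesis gives irreducibility of $\ev_1^{-1}$ only over general points, so the fibers you invoke may be reducible there. Moreover your $V$ is a union of strata indexed by all dual trees, and connectedness across these combinatorial types is not addressed. The paper sidesteps both problems by choosing a much smaller stratum: attach $m$ copies of one and the same map $\{r:(\bP^1,p)\ra Y\}\in\ev^{-1}(y)$ to a single contracted component carrying the marked point, so the stratum is identified with $\ev^{-1}(y)\times \cM_{0,m+1}$, which is geometrically irreducible using only the generic-fiber hypothesis; since $\ev_m^{-1}(y)$ is smooth at its generic point, the normality trick you quote then applies. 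You need this stratum (or an equivalent substitute) to close the argument.
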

Thus $\ocM'_{0,0}(Y,m\beta)$ contains smoothings of chains and trees
of $m$ free curves
from $\ocM'_{0,0}(Y,\beta)$

Proposition~\ref{prop:prep} a special case of \cite[3.5]{dJS}, so we only sketch
the proof.
\begin{proof}
Our proof is by induction on $m$.  There is nothing to prove in the base case
$m=1$, so we focus on the inductive step.

\begin{figure}
\begin{center}
\includegraphics{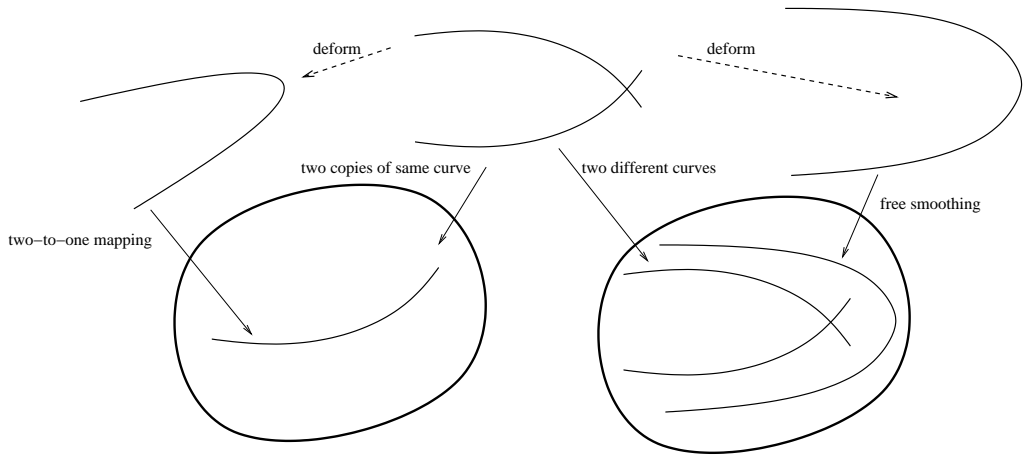}
\end{center}
\caption{The key degenerations}
\end{figure}

Fix a dense open subset $U\subset Y$ over
which the fibers of 
$$\ev_{n}|M'_{0,1}(Y,n\beta) \ra Y, \ n=1,\ldots,m-1,$$
are all irreducible of the expected dimension
and contain free curves.  
It follows that for any $y \in U$ and smooth points
$\{g_1:(D_1,p_1) \ra Y \} \in \ev_{n}^{-1}(y)$ and
$\{g_2:(D_2,p_2) \ra Y \} \in \ev_{m-n}^{-1}(y)$,  the stable map
$$\{g=(g_1,g_2):D_1 \cup_{p_1=p_2} D_2 \ra Y \} \in \ocM_{0,0}(Y,m\beta)$$
is in the same irreducible component of the stable map
space.  

This distinguished component must coincide with
$\ocM'_{0,0}(Y,m\beta)$.  Indeed, it suffices to show
that the $g$ constructed in the previous paragraph can be chosen
to lie on $\ocM'_{0,0}(Y,m\beta)$.  We may take $g_1$ and $g_2$
to be suitably general branched coverings of a fixed free curve 
$\{f:\bP^1 \ra Y \} \in M'_{0,0}(Y,\beta)$ through $y$.  However, the moduli space
of stable maps of degree $m$ from $\bP^1$ to $\bP^1$ is irreducible,
and even smooth as a stack.  Thus $g$ deforms to a degree $m$ covering
composed with $f$.  

Consider our admissible mapping $h:C \ra Y$.  We may assume $h$ maps the nodes
of $C$ to $U$.  Pick $D_1$ to be an `extremal' irreducible
component of $C$, i.e., one corresponding to a $1$-valent vertex of the dual
graph to $C$.  The stability condition means this is not contracted.
Set $D_2=\overline{C\setminus D_2}$ and $p=D_1 \cap D_2$ the disconnecting
node joining them.  Taking $g_1=h|D_1$ and $g_2=h|D_2$, we see that the
$g$ constructed above coincides with $h$, and thus is in the desired component.  

It remains to show that the generic fiber $\ev_m^{-1}(y)$ is irreducible.  
However, it admits a distinguished stratum
$$\ev^{-1}(y) \times \cM_{0,m+1},$$
corresponding to attaching $m$ copies of a fixed mapping
$$\{r:(\bP^1,p) \ra Y \} \in \ev^{-1}(y)$$
to a single contracted component $(\bP^1,q,p_1,\ldots,p_m)$.  
Here the $j$th copy is attached to $p_j$.  
This stratum is geometrically irreducible and $\ev_m^{-1}(y)$ is smooth 
at its generic point, thus $\ev_m^{-1}(y)$ is irreducible.  
\end{proof}

\begin{theo} \label{theo:WARSC2}
Let $X$ be a smooth projective variety over $F=k(B)$.  Assume 
there exists a class $\beta \in H_2(X_{\bC},\bZ)$ and 
an irreducible component of the space of Kontsevich stable maps
$$M:=\ocM'_{0,2}(X,\beta) \subset \ocM_{0,2}(X,\beta)$$
with the following properties:
\begin{itemize}
\item{$M$ is defined and absolutely irreducible over $F$;}
\item{a general point of $M$ parametrizes an immersed smooth curve;}
\item{the evaluation morphism
$$\begin{array}{rcl}
\ev: M & \ra & X^2 \\
\{f:(C,p_1,p_2)\ra X) \} & \mapsto & (f(p_1),f(p_2))
\end{array}
$$
is dominant with rationally connected generic fiber.}
\end{itemize}
Assume furthermore that the resulting components
$M_m:=\ocM'_{0,0}(X,m\beta)$ 
are rationally connected for each $m\ge 1$.  Then 
$X$ satisfies weak approximation.
\end{theo}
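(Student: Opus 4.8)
The plan is to reduce weak approximation for $X$ to finding sections of an auxiliary family of ruled surfaces, exactly as in the proof of the Corollary following Theorem~\ref{theo:WARSC1}, but now using the weaker hypotheses together with Proposition~\ref{prop:prep} to bootstrap from the two-pointed space $M$ to the unpointed spaces $M_m$. Using Observation~\ref{obse2}, it suffices to fix a regular projective model $\pi:\cX\ra B$, distinct places $b_1,\ldots,b_r\in B$, and smooth points $x_i\in\cX^{sm}_{b_i}$, and to produce a section $\sigma:B\ra\cX$ with $\sigma(b_i)=x_i$. First I would embed $\cX\subset\bP^N$ and take a general one-dimensional complete intersection $\cZ\subset\cX$ through $x_1,\ldots,x_r$, unramified there; this is a multisection of degree $m=\deg(\cZ/B)$, corresponding to a length-$m$ zero-cycle $Z\subset X$. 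The goal is to deform $\cZ$ inside $\cX$ into a \emph{ruled surface} $\cR\subset\cX$ whose generic fiber over $F$ is a rational curve in the class $m\beta$, still passing through all the $x_i$ and smooth along $Z$; then weak approximation for rational curves (Corollary~\ref{coro:rational}) applied to a resolution $\widetilde{\cR}\ra\cR$ finishes the argument as in the earlier proof.

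The main work is constructing $\cR$, and here is where the hypotheses enter. By hypothesis $M=\ocM'_{0,2}(X,\beta)$ has dominant evaluation with rationally connected generic fiber; its two-pointedness is what lets us glue copies head-to-tail. I would first verify the prerequisites of Proposition~\ref{prop:prep}: the component $\ocM'_{0,0}(X,\beta)$ contains a free curve (a general member of $M$, forgetting the marked points, is free since the evaluation map to $X^2$ — hence a fortiori to $X$ — is dominant with rationally connected, in particular irreducible, generic fiber), and $\ev:\ocM'_{0,1}(X,\beta)\ra X$ is dominant with irreducible geometric generic fiber (again inherited from the $M\ra X^2$ hypothesis). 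Proposition~\ref{prop:prep} then gives that chains and trees of $m$ free curves from $\ocM'_{0,0}(X,\beta)$ smooth out within $M_m=\ocM'_{0,0}(X,m\beta)$, and that $\ev_m:\ocM'_{0,1}(X,m\beta)\ra X$ stays dominant with irreducible generic fiber. Next I attach to the multisection $\cZ$ a suitably large number of fibral very free curves (as in Proposition~\ref{prop:nicesection} and \cite[\S2]{GHS}), producing a comb $C=Z\cup f_1(\bP^1)\cup\cdots\cup f_q(\bP^1)$ with $N_{C/\cX}\otimes\cI_{z_1,\ldots,z_m,x_1,\ldots,x_r}$ globally generated without higher cohomology; a general deformation through $x_1,\ldots,x_r$ gives a multisection meeting $X$ in a general length-$m$ cycle $Z'$. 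Having made $Z'$ general, I may assume its $m$ points lie in the dense open locus where $\ev_m$ has the good properties, and that the induced $m$-pointed configuration lands in the rationally connected generic fiber of $\ev_m^{\times m}:\ocM'_{0,m}(X,m\beta)\ra X^m$ over $Z'$.

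It remains to produce an $F$-rational point of this generic fiber corresponding to a smooth immersed rational curve that is an embedding at the marked points, and then spread it out over $B$. The generic fiber $Y=\ev^{-1}(Z')$ is, by the last hypothesis together with Proposition~\ref{prop:prep} (applied to pass from the pointed to the unpointed component and the irreducibility of fibers), rationally connected over $F$; the Graber--Harris--Starr and Koll\'ar--Miyaoka--Mori theorems (Theorems~\ref{theo:GHS} and \ref{theo:KMM}) give a Zariski-dense family of sections of a model $\cY\ra B$ of $Y$, and by density we may choose one avoiding the proper closed locus $\Delta\subsetneq Y$ where the domain degenerates, the map fails to be an immersion, or it fails to be an embedding at the marked points. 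Such a section yields $Z'\subset R\subset X$ with $R$ a smooth rational curve in class $m\beta$, immersed, embedded at the $x_i$, and smooth along $Z'$ (so along the $x_i$). Taking the closure $\cR\subset\cX$, resolving to $\psi:\widetilde{\cR}\ra\cR$ (an isomorphism over $Z'$ since $R$ is smooth there), noting that $\widetilde{\cR}\ra B$ is smooth at the preimages $x'_i$ because the proper transform of $\cZ'$ is a formal section through each, and applying Corollary~\ref{coro:rational} to $\widetilde{\cR}$ produces a section $\tilde\sigma$ with $\tilde\sigma(b_i)=x'_i$; then $\sigma=\psi\circ\tilde\sigma$ is the desired section. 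The hard part, relative to Theorem~\ref{theo:WARSC1}, is exactly the verification that Proposition~\ref{prop:prep} applies and propagates the ``rationally connected generic fiber'' property from $M$ through all the $M_m$ — i.e., controlling how the two-pointed gluing interacts with the $m$-pointed evaluation — rather than assuming rational connectedness of the full fibers outright; the geometric spreading-out and resolution steps are then formally identical to the earlier argument.
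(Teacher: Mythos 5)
Your reduction to the argument of Theorem~\ref{theo:WARSC1} breaks down at the step where you claim that the fiber $Y=\ev^{-1}(Z')$ of the $m$-pointed evaluation $\ocM'_{0,m}(X,m\beta)\ra X^m$ over a general length-$m$ cycle is rationally connected. The hypotheses of Theorem~\ref{theo:WARSC2} do not give this: they give rational connectedness of the generic fiber of the \emph{two}-pointed evaluation in class $\beta$, and rational connectedness of the \emph{total spaces} $M_m=\ocM'_{0,0}(X,m\beta)$. Proposition~\ref{prop:prep} only upgrades the former to dominance and \emph{irreducibility} of the generic fiber of the one-pointed evaluation $\ocM'_{0,1}(X,m\beta)\ra X$; it says nothing about rational connectedness of fibers over general $m$-tuples. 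Nor does rational connectedness of the total space $M_m$ imply rational connectedness of evaluation fibers: a rationally connected variety can fiber with non--rationally-connected fibers (e.g.\ the blow-up of $\bP^3$ along the base locus of a pencil of quartic surfaces is rational but fibers over $\bP^1$ in K3 surfaces). If the ``propagation'' you invoke at the end of your proposal were available, Theorem~\ref{theo:WARSC2} would simply be an instance of Theorem~\ref{theo:WARSC1}; obtaining it is essentially the gap in \cite{dJS} between rational simple connectedness and the strong form, and it is precisely what the harder argument is designed to avoid.

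The paper's proof instead uses the two hypotheses in separate stages. The two-pointed hypothesis is exploited after base change to $F'=k(\cZ)$: fix an auxiliary section $s:B\ra\cX$, view the pullback $s_{\cZ}$ and the diagonal $s'$ as two sections of $\cX\times_B\cZ\ra\cZ$, and apply Theorems~\ref{theo:GHS} and \ref{theo:KMM} to the rationally connected family $\cP=\ev^{-1}(s_{\cZ},s')$ to join each point of the multisection to $s$ by a very free immersed curve in class $\beta$. Over a general $b\in B$ this produces a star of $m$ such curves through $s(b)$, which by Proposition~\ref{prop:prep} is a smooth point $g$ of $\ocM'_{0,0}(\cX_b,m\beta)$. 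Only then does the hypothesis on $M_m$ enter: it makes $\ocM'_{0,0}(\cX,m\beta)\ra B$ a rationally connected fibration, and the comb-smoothing technique of Proposition~\ref{prop:nicesection} is run \emph{inside this moduli fibration} to produce a section $\rho$ agreeing at $b_1,\ldots,b_r$ with the configurations through $x_1,\ldots,x_r$; the ruled surface swept out by $\rho$ contains each $x_i$ as a smooth point of its fiber, and weak approximation in dimension one then yields the desired section. Your final spreading-out and resolution steps are fine, but the curve family they are applied to must be produced by this two-stage mechanism, not as a section of $\ev^{-1}(Z')$.
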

The main hypothesis holds for 
$X \subset \bP^n$ a complete intersection of degrees $(d_1,\ldots,d_r)$
with $d_1\ge d_2\ge \ldots \ge d_r \ge 2$, provided
$n+1 \ge \sum_{i=1}^r d_i^2$, $n\ge 4$, and $\deg(\beta)\ge 2$ \cite[1.1]{dJS}.
These properties are established in the course of proving these
varieties are {\em rationally simply connected}, i.e., for each $m \ge 2$ there is a 
canonically-defined irreducible component
$$\ocM'_{0,2}(X,m) \subset \ocM_{0,2}(X,m)$$
of the space of degree $m$ two-pointed stable maps of genus zero, such that
$$\ev:\ocM'_{0,2}(X,m) \ra X \times X$$
is dominant with rationally connected generic fiber.  
\begin{coro}
Let $X\subset \bP^n$ be a smooth 
complete intersections of type $(d_1,\ldots,d_r)$ over $F$.
If $n+1 \ge \sum_{i=1}^r d_i^2$ then $X$ satisfies weak approximation.
\end{coro}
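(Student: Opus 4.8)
The plan is to deduce this corollary directly from Theorem~\ref{theo:WARSC2}: the only work is to verify, for a smooth complete intersection $X\subset\bP^n$ of type $(d_1,\ldots,d_r)$ with $n+1\ge\sum_i d_i^2$, that the hypotheses of that theorem are met, the substantive input being the de Jong--Starr rational simple connectedness theorem.

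First I would dispose of degenerate cases. If some $d_i=1$, then $X$ is a complete intersection of the remaining degrees in a linear $\bP^{n-1}\subset\bP^n$, and the inequality $n+1\ge\sum_j d_j^2$ persists after deleting $d_i$; iterating, we may assume every $d_i\ge 2$. Then $\sum_i d_i^2\ge 4r\ge 4$, so $n\ge 3$, and if $n=3$ we must have $r=1$, $d_1=2$, i.e.\ $X$ is a smooth quadric surface, which is rational over $F$ and hence satisfies weak approximation by Corollary~\ref{coro:rational}. So it remains to treat the case $d_i\ge 2$ for all $i$ and $n\ge 4$, which is precisely the range covered by \cite[1.1]{dJS}.

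In that range I would invoke \cite[1.1]{dJS} to produce a class $\beta\in H_2(X_{\bC},\bZ)$ with $\deg(\beta)\ge 2$ and a distinguished irreducible component $M=\ocM'_{0,2}(X,\beta)$ of the two-pointed genus-zero Kontsevich space that is defined and absolutely irreducible over $F$, whose general point is an immersed smooth rational curve, and for which $\ev:M\ra X\times X$ is dominant with rationally connected generic fiber; this verifies the ``main hypothesis'' of Theorem~\ref{theo:WARSC2}. The remaining hypothesis---that the components $M_m:=\ocM'_{0,0}(X,m\beta)$ are themselves rationally connected for every $m\ge 1$---is also part of the de Jong--Starr analysis: showing $X$ is rationally simply connected proceeds exactly by controlling the degenerations of $\ocM'_{0,2}(X,\beta)$ to higher degree (cf.\ Proposition~\ref{prop:prep}) and establishing rational connectedness of the resulting moduli spaces. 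With both hypotheses in hand, Theorem~\ref{theo:WARSC2} yields weak approximation for $X$.

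The hard part is entirely internal to \cite{dJS}: verifying that for complete intersections in the stated range the evaluation map on $\ocM'_{0,2}(X,\beta)$ has rationally connected fibers, and that the spaces $\ocM'_{0,0}(X,m\beta)$ are rationally connected, requires a delicate induction on degree via the gluing-and-smoothing technique together with a detailed understanding of the geometry of lines, conics, and their deformations on $X$. Granting that machinery, the corollary is immediate; the reductions of the second paragraph are routine bookkeeping.
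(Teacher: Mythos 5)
Your proposal is correct and follows essentially the same route as the paper: the corollary is precisely Theorem~\ref{theo:WARSC2} combined with the de Jong--Starr verification \cite[1.1]{dJS} of its hypotheses (including rational connectedness of the components $\ocM'_{0,0}(X,m\beta)$) for complete intersections with $n+1\ge\sum_i d_i^2$, which is exactly how the paper intends the deduction. Your explicit reductions for the cases $d_i=1$ and $n=3$ (the quadric surface, rational over $F$ and handled by Corollary~\ref{coro:rational}) are a welcome bit of extra care, since the cited result of \cite{dJS} assumes $d_i\ge 2$ and $n\ge 4$ while the corollary's statement does not.
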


\begin{proof}
The first steps of the argument are identical to Theorem~\ref{theo:WARSC1};  thus we 
retain the notation of its proof, i.e., $\cZ \ra B$ is the degree $m$ multisection
containing the points we seek to approximate, but otherwise general.  
Let $U\subset \cX \times_B \cX$ denote the open subset over which $\ev$
has (geometrically) irreducible rationally connected fiber containing a very free 
immersed rational curve.  
We may assume $\cZ$ meets the image of $U$ under the first projection.

Fix a section $s:B \ra \cX$ meeting the image of $U$ under the section
projection.  Consider the basechange
$$\cX \times_B \cZ \ra \cZ$$
which has {\em two} distinguished sections:  the base change $s_{\cZ}$ of $s$ and the diagonal
section
$$s':\cZ \hookrightarrow \cZ \times_B \cZ  \hookrightarrow \cX \times_B \cZ.$$
Let $F'=k(\cZ)$ denote the resulting degree $m$ extension of $F=k(B)$ and 
$M_{F'}=M\times_{\Spec(F)} \Spec(F')$.  We still have that $\ev^{-1}(\cU \times_B \cZ)$ is rationally connected.  

Let $\ocM'_{0,2}(\cX\times_B \cZ, \beta)$ denote the irreducible component of the stable map space
with generic fiber $M_{F'}$;  it is proper over $\cZ$.   
The same holds true for
$$\cP:=\ev^{-1}(s_{\cZ},s') \subset \ocM'_{0,2}(\cX \times_B \cZ,\beta),$$
where here we take the relative evaluation map.  
Furthermore, by our assumption this is a family of rationally connected 
varieties.  

The Graber-Harris-Starr and Koll\'ar-Miyaoka-Mori theorems 
imply $\cP \ra \cZ$ has a section
$\tau:\cZ \ra \cP$ and these sections are Zariski dense.  In particular, we may assume
for general $z\in \cZ$, $\tau(z)$ parametrizes a smooth very free 
rational curve $T$ joining $s_{\cZ}(z)$ and $s'(z)$.  We have
$$\begin{array}{rcccl}
(s_{\cZ}(\cZ),s'(\cZ)) & \subset &  \cT' & \subset & \cX\times_B \cZ \\
		     &         &      & \searrow & \downarrow \\
		     &	       &      &          &  \cZ,
\end{array}
$$
where $\cT'$ is the closure of $T$.  
Let $\cT$ denote the image of $\cT'$ under projection to $\cX$;  this contains both
$s$ and $\cZ$, which is the projection of $s'$.  
\begin{figure}
\begin{center}
\includegraphics{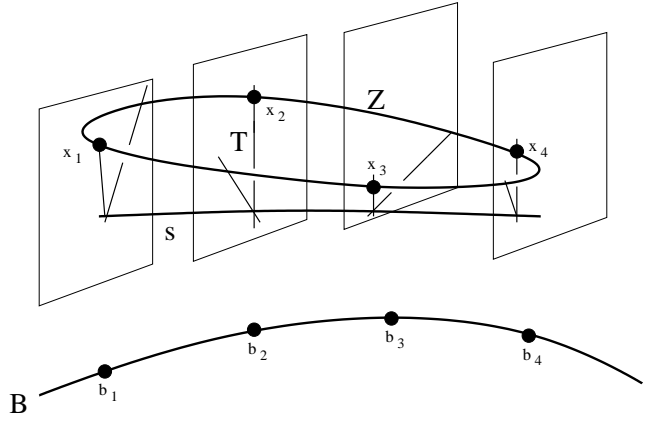}
\end{center}
\caption{Connecting the multisection to the section}
\end{figure}

As in the proof of Theorem~\ref{theo:WARSC1}, after blowing up $\cX$ we may assume $\cT \ra B$ is smooth wherever
the proper transform $\cZ \ra B$ is \'etale, and in particular, at $x_1,\ldots,x_r$.  

Let $b\in B$ be the general point;  we realize $\cT_b$ as the image of a stable map
$\{g:C \ra \cX_b \} \in \ocM_{0,0}(\cX_b, m\beta)$.  First, let
$$C=C_0 \cup C_1 \cup \cdots \cup C_m, \quad C_i \simeq \bP^1,$$
where $C_0$ meets $C_i,i=1,\ldots,m$ in a node $p_i$  and the $C_1,\ldots,C_m$
are disjoint.  Write $\cZ_b=\{z_1,\ldots,z_m\}$ and let $\cT_j$ denote the
image of $\cT'_{z_j}$ in $\cX_b$.  Since $\cT_j$ is a very free immersed curve, we
take $g|C_j:\bP^1 \ra \cT_j,j=1,\ldots,m$ to be the normalization, chosen so that $g(p_j)=s(b)$.
We take $g|C_0$ to be constant.  In the special case where $m=2$, we simply omit the
component $C_0$;  should $m=1$, we have a section through our prescribed points and there
is nothing to prove.

We apply Proposition~\ref{prop:prep}, which allows us to interpret $g$ as a non-stacky,
smooth point of $\ocM'_{0,0}(\cX_b,m\beta)$.
Let $\ocM'_{0,0}(\cX,m\beta)$ denote the corresponding irreducible component of
the space of stable maps in $\cX$.  This
is proper over $B$ with irreducible fiber $\ocM'_{0,0}(\cX_b,m\beta)$ over $b$.  
Our second hypothesis shows 
$$\ocM'_{0,0}(\cX,m\beta) \ra B$$
is a rationally connected fibration.

From now on, we tacitly replace $\ocM'_{0,0}(\cX,m\beta)$ with a resolution of singularities
that is an isomorphism over the smooth point $g$.  This resolution is also a rationally connected
fibration over $B$, and admits a section $\gamma$ with $\gamma(b)=g$.  
Applying the technique used in Proposition~\ref{prop:nicesection}, we construct a comb $C$
in $\ocM'_{0,0}(\cX,m\beta)$ with handle 
$\gamma(B)$ and teeth in suitable fibers, such that $N_C\otimes \cI_{\gamma(b_1),\cdots,\gamma(b_r)}$ is globally generated
with vanishing higher cohomology.  We deform to get a new section
$$\rho:B \ra \ocM'_{0,0}(\cX,m\beta),$$ with 
$\rho(b')$ a free immersed curve in $\ocM'_{0,0}(\cX_{b'},m\beta)$ for general $b' \in B$.  Applying this argument 
to the blow-up at $x_1,\ldots,x_r$,
we may even assume $\rho(b_i)=\gamma(b_i)$  for $i=1,\ldots,r$.  

Let $\cR \ra B$ denote the ruled surface corresponding to $\rho$.  By construction, we have $\cR_{b_i}=\cT_{b_i}$
is smooth at $x_i$ for $i=1,\ldots,r$.  Weak approximation holds in dimension one, e.g., for $\cR \ra B$, thus
we have a section $\sigma:B\ra \cR\hookrightarrow \cX $ with $\sigma(b_i)=x_i$.  
\end{proof}

\paragraph{Connections to $R$-equivalence}
Rational simple connectedness has implications for $R$-equivalence as well.  
\begin{defi} \cite{Maninbook}
Let $X$ be a projective variety over a field $F$.  Two points $x_1,x_2 \in X(F)$
are {\em directly $R$-equivalent} if there exists a morphism
$f:\bP^1 \ra X$
defined over $F$, such that $f(0)=x_1$ and $f(\infty)=x_2$.  
The resulting equivalence relation is known as {\em $R$-equivalence};  the set
of equivalence classes is denoted $X(F)/R$.  
\end{defi}

The following nice result of Pirutka \cite{Pirutka} is proven using techniques similar
to Theorems~\ref{theo:WARSC1} and \ref{theo:WARSC2}:  
\begin{theo} 
Let $F=\bC(B)$ denote the function field of a smooth complex curve $B$, or the field
$\bC((t))$.  Let $X$ be a smooth complete intersection of $r$ hypersurfaces in $\bP^n$
of degrees $d_1,\ldots,d_r$, defined over $F$.  Assume that $\sum_{i=1}^r d_i^2 \le n+1$.
Then $X(F)/R = 1$, i.e., there is a unique $R$-equivalence class.
\end{theo}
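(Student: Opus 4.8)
The plan is to mirror the argument of Theorem~\ref{theo:WARSC2}, but with $R$-equivalence replacing weak approximation as the end goal; in fact the $R$-equivalence statement is logically simpler, since we only need to connect two given rational points by a single rational curve rather than produce sections with prescribed jets. First I would dispose of the case $F=\bC((t))$: its ring of integers $\bC[[t]]$ is a complete DVR with algebraically closed residue field, and the Graber--Harris--Starr and Koll\'ar--Miyaoka--Mori machinery applies verbatim over $\Spec\bC[[t]]$ (a trait, or germ of a curve), so the same proof goes through; alternatively one approximates $\bC((t))$ by $\bC(B)$ for a suitable $B$ and takes limits. So the substance is the function-field case $F=\bC(B)$.

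So fix $x_1,x_2\in X(F)$, giving sections $s_1,s_2:B\to\cX$ of a regular projective model $\pi:\cX\to B$. The hypothesis $\sum d_i^2\le n+1$ (with the mild numerical conditions $n\ge 4$, $\deg\beta\ge 2$ satisfiable) is exactly the one under which $X$ is rationally simply connected in the sense recalled after Theorem~\ref{theo:WARSC2}: there is an absolutely irreducible component $M=\ocM'_{0,2}(X,\beta)$ whose evaluation $\ev:M\to X\times X$ is dominant with rationally connected generic fiber, a general point of $M$ is an immersed smooth curve, and the iterated components $M_m=\ocM'_{0,0}(X,m\beta)$ are rationally connected. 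The key step is then: form $\cP=\ev^{-1}(s_1,s_2)\to B$, the family over $B$ whose fiber over the generic point is the rationally connected variety $\ev^{-1}(x_1,x_2)$ parametrizing curves in class $\beta$ through $x_1$ and $x_2$. By Graber--Harris--Starr this family has a section, i.e.\ $\cP(F)\ne\emptyset$; unwinding, this produces a (possibly stacky/reducible) stable map over $F$ through $x_1$ and $x_2$. Using the rational connectedness of the $M_m$ exactly as in the proof of Theorem~\ref{theo:WARSC2} — degenerating to a comb of $m$ free curves over a general fiber, invoking Proposition~\ref{prop:prep} to land in the distinguished component $\ocM'_{0,0}(\cX,m\beta)$, and then smoothing — one upgrades this to an \emph{honest} rational curve $f:\bP^1_F\to X$ with $f(0)=x_1$, $f(\infty)=x_2$. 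That is precisely a direct $R$-equivalence $x_1\sim x_2$, so $X(F)/R=1$.

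The main obstacle is the same one that makes Theorem~\ref{theo:WARSC2} harder than Theorem~\ref{theo:WARSC1}: one cannot assume a priori that the relevant stable map produced by GHS has irreducible domain $\bP^1$ — it may be a degenerate stable map with contracted or multiple components — and one must genuinely pass through the distinguished components $\ocM'_{0,0}(X,m\beta)$ and their rational connectedness to break and re-smooth. Concretely, the delicate points are (i) checking that $\ev^{-1}(x_1,x_2)$ over the generic point remains irreducible and rationally connected after the base change to $B$ (this uses the ``normal along a connected subscheme $\Rightarrow$ geometrically integral'' principle stated in the excerpt, together with the behavior of $\ev$ over the open locus $U\subset X\times X$ of good fibers), and (ii) the normal-bundle bookkeeping in the comb construction that ensures $N_C\otimes\cI$ is globally generated with vanishing $H^1$ so that Lemma~\ref{lemm:smooth} applies and the smoothing still passes through $x_1$ and $x_2$. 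Neither is new — both are carried out in \cite{dJS} and in the proof of Theorem~\ref{theo:WARSC2} — but assembling them so that the endpoints of the curve are exactly the prescribed rational points, rather than merely general points, is where the care lies.
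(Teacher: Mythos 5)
Your first half essentially reproduces the paper's sketch (which itself defers details to \cite{Pirutka}): form the relative evaluation $\ev_{/B}:\ocM'_{0,2}(\cX,m\beta)\ra \cX\times_B\cX$, take $\cP=\ev_{/B}^{-1}(s_1,s_2)$, and apply Graber--Harris--Starr plus Koll\'ar--Miyaoka--Mori. But there is a genuine gap at the very first step: you fix \emph{arbitrary} $x_1,x_2\in X(F)$ and then describe the generic fiber of $\cP\ra B$ as ``the rationally connected variety $\ev^{-1}(x_1,x_2)$''. The rational simple connectedness hypothesis only says that $\ev$ is dominant with rationally connected \emph{generic} fiber; for a particular pair $(x_1,x_2)$ the fiber may be reducible, non-rationally-connected, or even empty, and nothing forces the given rational points to lie in the good open locus $U\subset X\times X$ (the fiber of $U$ over a fixed $x_1$ can a priori be empty). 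The paper's sketch explicitly restricts to general $x_1,x_2$ and states that the non-general case requires a further specialization argument, which then only produces a \emph{chain} of rational curves (enough for $R$-equivalence, but not a single direct equivalence). Your proposal silently assumes generality throughout, so as written it proves the statement only for general pairs of rational points, which does not give $X(F)/R=1$.

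A secondary issue is your ``upgrade'' step. The comb construction, Proposition~\ref{prop:prep}, and the smoothing in the proof of Theorem~\ref{theo:WARSC2} are aimed at producing a section of the unpointed space $\ocM'_{0,0}(\cX,m\beta)\ra B$ through prescribed points of \emph{special} fibers, and the resulting ruled surface is then handled by weak approximation in dimension one; this machinery does not keep track of the two marked-point evaluations over the generic point, which is what direct $R$-equivalence requires. In the general-position case the correct (and simpler) mechanism is the one in the paper's sketch: sections of $\cP\ra B$ are Zariski dense, so one can choose a section avoiding the boundary of degenerate stable maps, and its generic fiber is then an honest immersed $\bP^1_F$ with marked points mapping to $x_1$ and $x_2$. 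So for general points your argument can be repaired by replacing the comb/smoothing step with this density argument, but the non-general case still needs the specialization argument of \cite{Pirutka}, and that is missing from your proposal. (Your reduction of the $\bC((t))$ case to the trait $\Spec\bC[[t]]$ is fine in spirit and consistent with the literature.)
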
  
Here is rough sketch;  see \cite{Pirutka} for details.
Suppose that $\cX \ra B$ is a regular projective model and $s_1,s_2:B \ra \cX$
are two sections corresponding to $x_1,x_2 \in X(F)$.  For simplicity, 
we assume these points are general in $X$.   The evaluation map 
$$\ev:\ocM'_{0,2}(X,m) \ra X\times X$$
is dominant with rationally connected fibers for $m\ge 2$.  We also have a relative evaluation map
$$\ev_{/B}:\ocM'_{0,2}(\cX,m\beta) \ra \cX \times_B \cX$$ 
where $m\beta$ is the class of degree $m$ fibral curves.  Consider the pre-image
$\cP=\ev_{/B}^{-1}(s_1,s_2)$, which is rationally connected (since $s_1$ and $s_2$ are general)
and thus itself admits a section
$\rho:B \ra \cP$.  Indeed, such sections are Zariski-dense in the fibers, and thus correspond
to freely immersed rational curves in the generic fiber $f: \bP^1 \ra X$.  This rational
curve joins $x_1$ and $x_2$, proving direct $R$-equivalence.  
If $x_1$ and $x_2$ are not general, a more sophisticated argument (via specialization) still
yields a {\em chain} of rational curves joining $x_1$ and $x_2$, which suffices to prove
their $R$-equivalence.

\section{Questions for further study}
\label{sect:questions}
\begin{enumerate}
\item{A. Corti \cite{Cor} has developed a theory of `standard models' for del Pezzo 
surfaces of degree $\ge 2$ over Dedekind schemes;  he offers a fairly explicit
description of the singularities that arise.  Can weak approximation be established
using these standard models?  }
\item{While weak and strong approximation coincide for proper varieties,
they can differ in general.  What can we say about these properties for open log Fano
varieties $Y$ over $F=k(B)$, e.g., $Y=X\setminus D$ where $X$ is smooth and proper, $D\subset X$
is a reduced normal crossings divisor, and $-(K_X+D)$ is ample? }
\item{And what about integral points of open log Fano varieties?  
Suppose $X$ and $D$ are as above with compatible models
$\pi:(\cX,\cD) \ra B.$
Given a finite set $S \subset B$, we can consider $S$-integral points, defined as sections
$\sigma:B\ra \cX$ such that $\sigma^{-1}(\cD) \subset S$ set-theoretically.  What approximation
properties do these satisfy?  (See \cite{HT08b} for Zariski density results in this context.)}
\item{In the proofs of Theorem~\ref{theo:WARSC1} and \ref{theo:WARSC2},
to what extent can the technical assumptions on rational simple connectedness be weakened?  
Does it suffice to exhibit some curve class $\beta$ such that 
$$\ev:\cM_{0,2}(X,\beta) \ra X \times X$$
is dominant with rationally-connected fibers?}
\end{enumerate}

\appendix
\section{Appendix: Stable maps}
We continue to work over an algebraically closed field $k$ of characteristic zero.

\paragraph{What is a stable map?}
We review some basic terminology:  
A {\em curve } $C$ is a reduced connected projective scheme
of pure dimension one over $k$.  The {\em genus} of a curve
is its arithmetic genus $1-\chi(\cO_C)$.  A point $p\in C$ is a {\em node} if
either of the following equivalent conditions is satisfied
\begin{itemize}
\item{the tangent cone of $C$ at $p$ is isomorphic to
$$\Spec(k[x,y]/\left<xy\right>);$$}
\item{$C$ has two smooth branches at $p$, meeting transversally.}
\end{itemize}
A {\em marked point} of $C$ is a smooth point $s\in C$ and 
a {\em prestable curve with $n$ marked points} $(C,s_1,\ldots,s_n)$ is a 
nodal curve $C$ with $s_1,\ldots,s_n \in C$ such that $s_i\neq s_j$
when $i\neq j$.  

If $C$ is a nodal curve of genus zero then each irreducible component of $C$
is isomorphic to $\bP^1$.

Let $C$ be a nodal curve with normalization $\nu:C^{\nu} \ra C$
with connected components $C^{\nu}_1,\ldots,C^{\nu}_m$
and images $C_1=\nu(C^{\nu}_1),\ldots,C_m=\nu(C^{\nu}_m)$.
The {\em dual graph}
of $C$ is a graph
with vertices $\{v_i, i=1,\ldots,m\}$ and edges $\{e_p\}$ indexed by the nodes
$p\in C$, i.e., given $p \in C$ a node with $\nu^{-1}(p)=\{p',p''\}$ where
$p' \in C^{\nu}_i$ and $p'' \in C^{\nu}_j$, then the edge $e_p$ joins $v_i$ to $v_j$.  
We often put additional structure on this graph, e.g., we can label the vertex
$v_i$ with $g_i=\mathrm{genus}(C^{\nu}_i)$.  
For a curve with marked point we add a tail
(i.e., an edge with just one endpoint) at the vertex corresponding to the 
component of the normalization containing the point, i.e., if $s_i$
lies on $C_j^{\nu}$ then we attach a tail $t_i$ to the vertex $v_j$.

Let $C$ be a nodal curve with dualizing sheaf $\omega_C$
and $C^{\nu}_i$ a component of its normalization.  Then
$$(\nu^*\omega_C)|C^{\nu}_i=\omega_{C^{\nu}_i}(D_i)$$
where $D_i=\sum_{\text{nodes } p \in C_i} \nu^{-1}(p) \cap C^{\nu}_i$.

\begin{defi}
Let $(C,p_1,\ldots,p_n)$ be a nodal curve 
with $n$ marked points.  It is {\em stable} 
if $\omega_C(p_1+\cdots+p_n)$ is ample.  
In combinatorial terms, if $v_i$
is a vertex of the dual graph with $g_i=0$ (resp.~$g_i=1$) then at least
three (resp.~one) edges or tails are incident to $v_i$.

Now fix a scheme $X$.  
A morphism $f:(C,s_1,\ldots,s_n)\ra X$ is called a {\em prestable map};
it is {\em stable} if $\omega_C$ is ample relative to $f$, i.e., its
restriction to each irreducible component of $C$ contracted by $f$ has
positive degree.  
Combinatorially, if $v_i$
corresponds to a contracted component
and $g_i=0$ (resp.~$g_i=1$) then at least
three (resp.~one) edges or tails are incident to $v_i$.

Two prestable maps with $n$ marked points
$$f:(C,s_1,\ldots,s_n)\ra X, \quad 
f':(C',s'_1,\ldots,s'_n)\ra X$$ 
are {\em isomorphic} if there 
is an isomorphism $\iota:C \ra C'$ over $X$ with
$\iota(s_i)=s'_i$ for each $i$.  
\end{defi}
Note that precomposing a stable map with an automorphism of the curve
gives an isomorphic stable map.  

Given a morphism of schemes
$\phi:X \ra B$ and a prestable map
$f:(C,s_1,\ldots,s_n) \ra X$, postcomposing by $\phi$ clearly gives a 
prestable map
$$\phi_*f:=\phi \circ f:(C,s_1,\ldots,s_n) \ra B.$$

\begin{defi}
Let $S$ be a scheme of finite type over $k$.  A {\em family of 
prestable curves with $n$ marked points} consists of a flat
proper morphism $\pi:\cC \ra S$ and sections 
$s_1,\ldots,s_n$ of $\pi$ such that each geometric fiber 
$$(\cC_t=\pi^{-1}(t),s_1(t),\ldots,s_n(t)), \quad t\in S(k)$$
is prestable with $n$ marked points.  

Given a scheme $\cX \ra S$, a {\em prestable map}
consists of a family of prestable curves with marked points
$(\cC,s_1,\ldots,s_n) \ra S$
and a morphism:
$$\begin{array}{rcccl}
\cC & & \stackrel{f}{\ra} & &\cX \\
    &\searrow & & \swarrow & \\
    &         & S & & 
\end{array}
$$
It is a {\em stable map} if, for each $t\in S(k)$, the induced
$$f_t:(\cC_t,s_1(t),\ldots,s_n(t)) \ra \cX_t$$ 
is a stable map.  
\end{defi}
Usually we have $\cX=X \times S$ for some scheme $X$ over $k$,
in which case we call these families of prestable/stable
maps to $X$.  

Again, given a morphism of schemes over $S$
$$\begin{array}{rcccl}
\cX & & \stackrel{\phi}{\ra} & &\cB \\
    &\searrow & & \swarrow & \\
    &         & S & & 
\end{array}
$$
and a prestable map
$f:(\cC,s_1,\ldots,s_n) \ra \cX$, the composition
$$\phi_*f=\phi \circ f:(\cC,s_1,\ldots,s_n) \ra \cB$$
is also prestable.  

\paragraph{Statement of results}
The first two results are fairly standard applications of the language
of algebraic stacks:

\begin{theo}[Existence I]
\label{theo:existI}
Let $X$ be a scheme over $k$.  
There exists a (non-separated) Artin
stack $\cM^{ps}_{g,n}(X)$, locally of finite type over $k$, representing
families of prestable maps of genus $g$ curves with $n$ marked points
into $X$.  Given a morphism $\phi:X \ra B$ over $k$,
postcomposition induces a morphism (i.e., a $1$-morphism)
$$\phi_*:\cM^{ps}_{g,n}(X) \ra
\cM^{ps}_{g,n}(B).$$
\end{theo}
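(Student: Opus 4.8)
The statement is the standard existence result for the Artin stack of prestable maps, together with functoriality of postcomposition. I would prove it by assembling known pieces rather than building the moduli stack from scratch. First I would recall (or cite) that the stack $\mathfrak{M}_{g,n}$ of prestable $n$-pointed genus $g$ curves is an Artin stack, smooth and locally of finite type over $k$; this is classical (Behrend--Manin, or \cite{FP}). Then, for a fixed scheme $X$ over $k$, I would define $\cM^{ps}_{g,n}(X)$ as the functor sending a $k$-scheme $S$ to the groupoid of families $(\pi:\cC\to S, s_1,\ldots,s_n)$ of prestable curves together with a morphism $f:\cC\to X$ over $k$, with the evident notion of isomorphism. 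The key point is to realize this functor relative to $\mathfrak{M}_{g,n}$: there is a natural forgetful morphism $\cM^{ps}_{g,n}(X)\to\mathfrak{M}_{g,n}$ remembering only the pointed curve, and I would show the fiber over a family $(\cC\to S,s_i)$ is precisely the functor of $S$-morphisms $\cC\to X$, i.e.\ the relative Hom-functor $\underline{\mathrm{Hom}}_S(\cC, X\times S)$.

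The main technical step is therefore the representability of this relative Hom-functor. Since $\pi:\cC\to S$ is flat and proper and $X$ is of finite type, the Grothendieck--Hilbert machinery applies: $\underline{\mathrm{Hom}}_S(\cC,X\times S)$ is represented by an open subscheme of a Hilbert scheme of the product $\cC\times_S(X\times S)$ (a morphism corresponds to its graph, which is a closed subscheme finite and flat — indeed isomorphic — over $\cC$). When $X$ is not projective one replaces the Hilbert scheme by an algebraic space (Artin), but it remains locally of finite type over $S$. Granting this, the forgetful morphism $\cM^{ps}_{g,n}(X)\to\mathfrak{M}_{g,n}$ is representable by algebraic spaces and locally of finite type; since $\mathfrak{M}_{g,n}$ is an Artin stack locally of finite type over $k$, so is $\cM^{ps}_{g,n}(X)$. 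Non-separatedness is inherited from $\mathfrak{M}_{g,n}$ (prestable curves can be sprouted) and is consistent with the statement. I would also check the fppf descent / effectivity conditions needed to apply Artin's criteria, but these are formal consequences of faithfully flat descent for quasi-coherent sheaves and for morphisms to $X$.

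For the functoriality claim, given $\phi:X\to B$ over $k$, postcomposition $f\mapsto \phi\circ f$ visibly takes families of prestable maps into $X$ to families into $B$ and respects pullback along $S'\to S$ and isomorphisms, hence defines a $1$-morphism $\phi_*:\cM^{ps}_{g,n}(X)\to\cM^{ps}_{g,n}(B)$; compatibility of this with the forgetful morphisms to $\mathfrak{M}_{g,n}$ is immediate since the underlying pointed curve is unchanged. This is purely a matter of unwinding the moduli description and requires no further input.

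The step I expect to be the genuine obstacle is the representability of the relative Hom-functor in the generality where $X$ is an arbitrary scheme of finite type (not necessarily projective): here one must invoke Artin's representability theorem or the theory of Hilbert/Hom algebraic spaces, verifying the deformation-theoretic and limit conditions. In the projective case it is classical; in general it is the one place where real algebraic-geometry input beyond formal nonsense is needed. Everything else — the stack structure on $\mathfrak{M}_{g,n}$, descent, and the functoriality of $\phi_*$ — is bookkeeping that I would either cite or check routinely.
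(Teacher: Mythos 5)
Your argument is correct, but it takes a different route from the paper's. You reduce everything to two cited inputs: the algebraicity of the stack $\mathfrak{M}_{g,n}$ of prestable pointed curves, and the representability of the relative Hom-functor $\mathrm{Hom}_S(\cC, X\times S)$ by an (algebraic-space) open locus of graphs inside a Hilbert space of $\cC\times_S (X\times S)$; the stack of prestable maps is then the relative Hom-space over $\mathfrak{M}_{g,n}$, and algebraicity follows from relative representability of the forgetful morphism. The paper instead builds the stack from scratch: near a given map $f_0:(C_0,s_1(0),\ldots,s_n(0))\ra X$ it chooses a very ample $L_0$ with a basis of sections, embeds $C_0$ in $\bP^N$, and takes the component $\Hilb_2$ of Artin's Hilbert algebraic space parametrizing the nested data (marked points inside the curve inside $\bP^N$, together with the graph of the map in $C_0\times X$); an open subset $U\subset\Hilb_2$ carries a universal prestable map, étale-local liftability of arbitrary families to $U$ is checked by rigidifying a relatively very ample line bundle, and the stack is presented as the quotient of the resulting smooth equivalence relation on the disjoint union of such charts, with $\phi_*$ defined chart-by-chart via $\Gamma_f\mapsto\Gamma_{\phi\circ f}$. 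Your approach is more modular and makes the functoriality of $\phi_*$ completely formal (the underlying pointed curve is untouched), at the cost of invoking the algebraicity of $\mathfrak{M}_{g,n}$ and of Hom-spaces as black boxes; the paper's chart construction is self-contained and produces explicit smooth presentations that are convenient for its later discussion of stabilization. One caveat common to both treatments: identifying morphisms with closed graphs, and hence the graph-in-Hilbert-space trick, implicitly uses separatedness of $X$ (otherwise the graph is only an immersion), a point neither you nor the paper dwells on, so it is not a gap relative to the paper's level of detail.
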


\begin{theo}[Existence II]  \label{theo:existII}
Let $X$ be a proper scheme over $k$.  There exists 
an open substack
$$\ocM_{g,n}(X) \subset \cM^{ps}_{g,n}(X)$$
parametrizing stable maps.  Each connected component
is a proper Deligne-Mumford stack 
of finite type over $k$.  If $X$ is projective
then each connected
component of the coarse moduli space is projective.
\end{theo}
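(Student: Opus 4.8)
The plan is to establish the assertions in sequence---openness of the stable locus inside $\cM^{ps}_{g,n}(X)$, the Deligne--Mumford property, finite type of the connected components, and properness (plus projectivity of the coarse space when $X$ is projective)---following the route of Kontsevich, Behrend--Manin, and \cite{FP}. First I would check openness: stability of $f:(C,s_1,\ldots,s_n)\ra X$ is equivalent to requiring that every contracted component of genus $0$ carry at least three special points and every contracted component of genus $1$ carry at least one, which in a family $\cC\ra S$ amounts to relative ampleness of $\omega_{\cC/S}$ with respect to $f$---an open condition on $S$. Hence $\ocM_{g,n}(X)$ is an open substack.

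Next I would verify the Deligne--Mumford property by bounding automorphisms. An automorphism $\iota$ of $(C,s_1,\ldots,s_n)$ over $f$ restricts to the identity on each non-contracted component, since it fixes the infinitely many points lying over a general point of the image, while on each contracted rational or elliptic component it is an automorphism of a stable pointed curve, of which there are only finitely many; an induction along the dual graph then shows $\mathrm{Aut}(f)$ is finite, and reducedness is automatic in characteristic zero, so the diagonal of $\ocM_{g,n}(X)$ is unramified. For finite type I would first observe that on a connected component the class $\beta=f_*[C]\in H_2(X_\bC,\bZ)$ is constant; fixing an ample line bundle $L$ on $X$ with $e=\deg_\beta L$, stability bounds the number of irreducible components of $C$ (non-contracted ones have $L$-degree at least one, contracted ones hang off in trees of stable curves) and hence the genus and combinatorial type. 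Then for $k,m\gg 0$ depending only on $(g,n,e)$ the line bundle $\omega_C(s_1+\cdots+s_n)^{\otimes m}\otimes f^*L^{\otimes k}$ is very ample of bounded degree, so stable maps of class $\beta$ embed as a locally closed substack of a quotient of a Hilbert scheme of $\bP^N\times X$ with fixed Hilbert polynomial; the deformation theory of Section~\ref{sect:general} (the normal sheaf $N_f$) confirms this is a Deligne--Mumford stack of finite type.

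The hard part will be properness, which I would reduce via finite type to the valuative criterion over a trait $T=\Spec R$ with $R$ a discrete valuation ring. Given a stable map over the generic point, after a finite base change I would spread it to a prestable map over the punctured trait and take the closure of its graph in a Hilbert scheme of $\bP^N\times X$ to obtain a flat limiting curve carrying only a \emph{rational} map to $X$; semistable reduction for curves together with the valuative criterion of properness applied to $X$ itself lets me resolve this rational map, at the cost of inserting chains of rational components mapping into $X$, and the stabilization morphism then contracts the resulting unstable components to produce the stable limit. Uniqueness of that limit---hence separatedness---follows because any two prestable extensions are dominated by a common one with the same stabilization. Keeping track of the graph through this limiting process and running stable reduction cleanly is the delicate step I expect to cost the most.

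Finally, for projectivity of the coarse space when $X$ is projective, I would arrange the Hilbert-scheme embedding above to be $\mathrm{PGL}$-equivariant and invoke the Geometric Invariant Theory construction of \cite{FP}, realizing the coarse space of each connected component as a projective GIT quotient; equivalently, one exhibits an ample class assembled from the $\psi$-classes, the Hodge determinant, and the $\ev_i^*\cO_X(1)$. Since the space is already proper, such an ample class forces projectivity.
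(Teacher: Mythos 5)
The paper itself does not prove this theorem: the Appendix sketches only Existence I (the prestable Artin stack, built from graphs in Hilbert schemes) and explicitly defers the stable-locus, properness and projectivity assertions of Existence II to the literature, citing \cite[\S 5, \S 6, 8.4]{AV}, \cite{FP} and \cite{BM}. So your sketch has to be measured against that standard route, and in outline it does follow it: openness of the stability condition, finiteness of automorphisms, boundedness plus a Hilbert-scheme rigidification for finite type, stable reduction for the valuative criterion, and an ampleness argument for the coarse space.

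There are, however, two genuine gaps. First, the theorem is stated for $X$ a \emph{proper} scheme, but your boundedness and properness steps silently assume projectivity: you fix an ample line bundle $L$ on $X$ to bound degrees and combinatorial types, and you take closures of graphs in a Hilbert scheme of $\bP^N\times X$. For merely proper $X$ (precisely the case the paper cares about, since good models may only exist as algebraic spaces) one must either use Artin's algebraic-space Hilbert functor, as the paper does for Existence I, or reduce to the projective case via Chow's lemma; this is what \cite[8.4]{AV} supplies, and your argument as written does not cover it --- only the final projectivity claim for the coarse space is legitimately restricted to projective $X$. Second, your automorphism argument contains a false step: an automorphism of a stable map need \emph{not} restrict to the identity on a non-contracted component (a multiple cover $\bP^1\ra\bP^1$ with few special points has nontrivial automorphisms commuting with the map). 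What is true, and all that the Deligne--Mumford property requires, is that automorphisms of a component commuting with a finite morphism and fixing the special points form a finite group; the induction over the dual graph should be run with that statement. Two smaller points: the relative ampleness characterizing stability is that of $\omega_{\cC/S}(s_1+\cdots+s_n)$ with respect to $f$, not of $\omega_{\cC/S}$ alone; and the attribution of a direct GIT construction to \cite{FP} is loose --- their projectivity argument is for $\bP^r$-targets and proceeds by rigidification and descent --- though this is inessential, since some standard projectivity argument (an ample combination of tautological classes, or semipositivity) does complete that part when $X$ is projective.
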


The third statement uses slightly different techniques:
\begin{theo}[Stabilization]
Let $X$ be a scheme over $k$.  Consider the open
substack
$$\cM^{\circ}_{g,n}(X) \subset \cM^{ps}_{g,n}(X)$$
parametrizing maps that are non-constant or satisfy $2g-2+n>0$.  
There exists a stabilization morphism
$$\begin{array}{rcl}
\sigma:\cM^{\circ}_{g,n}(X) &\ra& \ocM_{g,n}(X) \\
\{ f:(C,s_1,\ldots,s_n)\ra X \} & \mapsto & \{ f':(C',s'_1,\ldots,s'_n)\ra X \}
\end{array}
$$
characterized as follows:  To obtain $C'$, successively contract 
each irreducible component $D \subset C$
such that $f(D)=\{\text{point}\}$ and $\omega_C(s_1+\ldots+s_n)|D$ fails to 
be ample.  The morphism $f$ descends to a morphism $f':C'\ra X$.  
\end{theo}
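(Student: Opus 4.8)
The plan is to construct the stabilization morphism fiberwise over the parameter scheme, then check that the fiberwise construction glues into a morphism of stacks. The heart of the matter is the contraction operation on a single prestable map, so I would begin there: given $\{f:(C,s_1,\ldots,s_n)\ra X\}$ with $C$ a nodal curve and $f$ either non-constant or satisfying $2g-2+n>0$, define the set $\mathcal{B}\subset C$ of \emph{bad} components, namely the irreducible $D\subset C$ that are contracted by $f$ and for which $\omega_C(s_1+\cdots+s_n)|_D$ has non-positive degree --- equivalently, rational components carrying at most two special points, or genus-one components with no special points. I would show these bad components form subcurves that are themselves trees (or single cycles) of rational curves, each attached to the rest of $C$ along one or two points, so that contracting each maximal connected bad subcurve to a point (respectively, in the genus-one trivial case, to a smooth point of genus one) yields again a nodal curve $C'$ of the same arithmetic genus with images of the marked points; the stability inequality then holds on $C'$ by construction. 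Since $f$ is constant on each contracted subcurve, $f$ factors through the contraction $c:C\ra C'$, giving $f':C'\ra X$ with $f'\circ c=f$, and $(C',s_1',\ldots,s_n')$ together with $f'$ is stable. One must iterate: contracting a bad component can create a new bad component (a rational component that previously had three special points, one of which was the node to a contracted tail), but each contraction strictly decreases the number of irreducible components, so the process terminates, yielding a canonical $C'$ independent of the order of contractions.

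Next I would upgrade this to families. Given a family $(\cC,s_1,\ldots,s_n)\ra S$ with a map $f:\cC\ra X$ lying in $\cM^{\circ}_{g,n}(X)$, the subset of $\cC$ consisting of bad components of fibers is, by the openness/closedness properties of ``being contracted'' (the locus where $f$ is constant on a fiber component is closed) and the constancy of degrees of $\omega_{\cC/S}(\sum s_i)$ on components, a closed subscheme of $\cC$ flat over $S$ after stratifying $S$ if necessary; the contraction of a family of nodal curves along such a relatively nice subcurve exists as a standard operation (e.g.\ via relative $\mathrm{Proj}$ of a suitable power of $\omega_{\cC/S}(\sum s_i)$ once it is relatively semiample, or by the explicit local models of node-smoothing and component-contraction). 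This produces $(\cC',s_1',\ldots,s_n')\ra S$ and $f':\cC'\ra X$, hence a stable map over $S$, and the construction is manifestly compatible with base change $S'\ra S$ because the contraction of bad components is characterized by a universal property on fibers. Compatibility with base change is exactly what is needed to get a morphism of stacks $\sigma:\cM^{\circ}_{g,n}(X)\ra\ocM_{g,n}(X)$; that it lands in the stable locus was verified fiberwise, and that it is representable/an actual $1$-morphism follows from the functoriality just described.

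Finally I would record the characterization stated in the theorem --- that $C'$ is obtained by successively contracting the bad components and $f$ descends --- which is immediate from the construction, and note uniqueness: any two contractions realizing the stabilization of a fiber agree because the stabilization is the image of $C$ under the morphism to the coarse stable model defined by sections of a large power of $\omega_C(\sum s_i)$, which is canonical.

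\textbf{The main obstacle} will be the family-level contraction: one must show that the relative dualizing sheaf twisted by the marked points becomes relatively semiample after removing the bad locus --- or, alternatively, carry out the contraction by hand using the local structure of the total space near a node and near a chain of contracted $\bP^1$'s, and then check this local construction glues and is flat over $S$. This is where one genuinely needs techniques beyond the formal stack yoga that sufficed for Theorems~\ref{theo:existI} and \ref{theo:existII}; the subtlety is that the bad locus can jump in families (a component can specialize from stable to contracted), so a naive global twist of $\omega_{\cC/S}$ is not semiample without first passing to an appropriate stratification or blow-up of $S$, and one must argue that the resulting contraction is nonetheless well-defined on all of $\cM^{\circ}_{g,n}(X)$ by the fiberwise universal property.
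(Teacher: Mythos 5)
Your fiberwise analysis (identifying the bad components, termination and order-independence of the iterated contraction, descent of $f$, preservation of genus and stability) is sound, but it only reproduces the characterization stated in the theorem; the content of the proof lies exactly at the step you flag as the main obstacle, and there your proposal has a genuine gap. To define a $1$-morphism $\sigma:\cM^{\circ}_{g,n}(X)\ra\ocM_{g,n}(X)$ you must, for \emph{every} family $(\cC,s_1,\ldots,s_n)\ra S$ with $f:\cC \ra X\times S$, produce a stable map over the \emph{same} base $S$, compatibly with arbitrary base change. Stratifying $S$ only gives contractions over locally closed pieces, and blowing up $S$ changes the base; neither produces the required datum over $S$ itself, and the appeal to a ``fiberwise universal property'' does not supply the algebraic structure that is actually at stake (flatness of $\cC'$ over $S$, and commutation of the relevant pushforwards with base change when the contracted locus jumps). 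As written, then, the family-level contraction --- hence the morphism of stacks --- is not constructed.

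The paper's route never isolates the bad locus, which is how it sidesteps the jumping problem: fix $\cL$ on $\cC$ ample relative to $\pi$, let $c=a/b$ be minimal with $\omega_{\pi}(s_1+\cdots+s_n)\otimes\cL^{c}$ nef \emph{relative to $f$} (not relative to $\pi$ --- this is why your proposed twist of $\omega_{\cC/S}(\textstyle\sum s_i)$ alone cannot be semiample), set $M=\omega^b_{\pi}(b(s_1+\cdots+s_n))\otimes\cL^{a}$, and take $\cC'=\Proj(\oplus_{N\ge 0}f_*M^N)$ over $X\times S$, which contracts exactly the components on which $M$ is trivial; when $X$ is projective one can instead use $\omega_{\pi}(s_1+\cdots+s_n)\otimes f^*\cO_X(3H)$ as in Behrend--Manin. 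The substantive inputs are precisely what your sketch postpones: relative global generation of $M^N$ and vanishing of its higher direct images (a relative basepoint-freeness statement, proved by Knudsen-type arguments cited in the paper), which make the $\Proj$ construction well defined over an arbitrary $S$ and compatible with base change. Supplying an argument of that kind, or some equivalent device producing the contraction over an arbitrary base, is what your proof still needs.
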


\paragraph{On existence}
We focus on Theorem~\ref{theo:existI}, as the properness/projectivity
assertions of Theorem~\ref{theo:existII} are covered in several places in the literature.  

We sketch a construction for the relevant stack locally in a neighborhood of
$$f_0:(C_0,s_1(0),\ldots,s_n(0))\ra X.$$
We abuse terminology, using the term `Hilbert scheme' for the
algebraic space parametrizing proper subschemes of a scheme, as constructed in
\cite{Art69}.  

First, choose a line bundle $L_0$ on $C_0$ that is very ample with 
no higher cohomology.  Fix a basis for $\Gamma(C_0,L_0)$ and consider
the corresponding embedding 
$C_0\hookrightarrow \bP^N.$  
Let $\Hilb_1$ denote the connected component of the Hilbert scheme
parametrizing nested subschemes
$$\{s_1(0),\ldots,s_n(0) \} \subset C_0 \subset \bP^N.$$
Let $\Hilb_2$ denote the component parametrizing pairs of
nested subschemes
$$\begin{array}{c}
\{s_1(0),\ldots,s_n(0) \} \subset C_0 \subset \bP^N \\
\Gamma_{f_0} \subset C_0 \times X
\end{array}$$
where $\Gamma_{f_0}$ is the graph of our stable map.
Let $0\in \Hilb_2$ denote the distinguished point corresponding
to our choice of prestable map and projective embedding of $C$.  
Restricting to a suitable open neighborhood $0 \in U \subset \Hilb_2$,
we obtain a `universal' prestable map over $U$.  Precisely,
restrict to the $u \in \Hilb_2$ where 
\begin{itemize}
\item{$s_1(u),\ldots,s_n(u)$ are distinct;}
\item{$\cC_u$ is nodal;}
\item{$s_i(u) \in \cC_u$ is a smooth point;}
\item{$\Gamma_u \subset \cC_u \times X$ is the graph of a function.}
\end{itemize}
The relevant prestable map is induced by projection onto $X$.  

{\bf Claim:}  
Let $(S,0)$ denote a pointed scheme and
$$f:(\cC,s_1,\ldots,s_n) \ra X\times S$$
a family of prestable maps agreeing with our initial
stable map at the distinguished point $0$.  
There exists an \'etale neighborhood $S' \ra S$
of $0$ and a morphism
$$\mu:S' \ra U \subset \Hilb_2$$
such that the pull-back of the universal stable map over $U$
is isomorphic to the pull back of our family of prestable maps
$$f':(\cC',s_1',\ldots,s_n') \ra X \times S',$$
where $\cC'=\cC\times_S S'$, the $s'_j$ are the induced sections,
and $f'$ is the morphism obtained by composition $f$ with the
projection $\cC' \ra \cC$.  

Here is the idea of the proof:  We choose $S'$ in such a way that
$\pi':\cC'\ra S'$ admits a relatively very ample 
line bundle $\cL$ restricting
to $L_0$ at $C_0$, and $\pi'_*\cL$ admits a trivialization over $S'$
restricting to our choice of basis of $\Gamma(C_0,L_0)$.  
Using this data, we get a canonical lift of our prestable map to
$U$.  

Each such $U$ gives a open neighborhood for
$$[f_0:(C_0,s_1(0),\ldots,s_n(0))\ra X] \in \cM^{ps}_{g,n}(X)$$
in the smooth topology.  We would like to use these 
as the basis of a stack presentation of $\cM^{ps}_{g,n}$.  
For any two such open sets $U'$ and $U''$, we must describe
the gluing relation between $U'$ and $U''$:  Given universal 
prestable maps
$$\cC' \ra U', \quad f':(\cC',s'_1,\ldots,s'_n) \ra X\times U',$$ 
and 
$$\cC'' \ra U'', \quad f'':(\cC'',s''_1,\ldots,s''_n) \ra X\times U'',$$ 
we glue fibers over $u' \in U'$ and $u'' \in U''$
when there is an isomorphism
$$\iota:\cC'_{u'} \ra \cC''_{u''}$$
such that $f''\circ \phi=f'$.  This is a smooth equivalence relation,
so the quotient is an Artin stack (see \cite[\S 4]{LMB}).  

Suppose $\phi:X\ra B$ is a morphism of projective varieties,
$$f_0:(C_0,s_1(0),\ldots,s_n(0))\ra X$$
a prestable map to $X$, and
$$\phi\circ f_0:(C_0,s_1(0),\ldots,s_n(0))\ra B$$
the induced map to $B$.  Let $\Hilb_2^X$ and $\Hilb_2^B$
denote the Hilbert schemes constructed as above and 
$U \subset \Hilb_2^X$ and $V\subset \Hilb_2^B$ the 
associated open subsets.  Then composition by $\phi$
induces morphisms
$$\begin{array}{rcl}
\phi_*:U & \ra & V  \\
	\Gamma_f & \mapsto & \Gamma_{\phi \circ f}
\end{array}
$$
compatible with the equivalence relations.  Thus we get a morphism of 
the corresponding stacks.

\paragraph{Discussion of stabilization}
Suppose we have a morphism $S \ra \cM^{\circ}_{g,n}(X)$ corresponding
to a prestable map, consisting of a family of prestable curves
$$\pi:(\cC,s_1,\ldots,s_n) \ra S$$
and a morphism
$f:\cC \ra X\times S$
over $S$.  Fix a line bundle $\cL$ on $\cC$ ample relative to $\pi$
and thus ample relative to $f$.  

Suppose that $\omega_{\pi}(s_1+\cdots+s_n)$ fails to be ample relative
to $f$, i.e., some fiber $\cC_t$ admits an irreducible component $D$
such that $f(D)=\{\text{point}\}$ and $\omega_{\pi}(s_1+\cdots+s_n)$
has negative degree along $D$.  It is evident that there are
a finite number of curve classes with this property.  Choose $c>0$ 
to be the smallest positive number such that the $\bQ$-divisor
$$\omega_{\pi}(s_1+\cdots+s_n) \otimes \cL^{c}$$
is nef relative to $f$.  Write $c=a/b$ with $a,b \in \bN$ so that
$$M=\omega^b_{\pi}(b(s_1+\cdots+s_n)) \otimes \cL^a$$
is Cartier.  Note that $M$ is {\em trivial} along any component
$D \subset \cC_t$ over which it has zero degree.  

One can show that for some $N>0$, $M^N$ is globally generated and has no 
higher direct images relative to $f$.  (We give references below.)
In particular $\oplus_{N\ge 0}f_*M^N$ is finitely generated and
we get a morphism 
$$\begin{array}{rcccl}
\cC & & \stackrel{\beta}{\lra} & & \cC':=\Proj(\oplus_{N\ge 0} f_*M^N)\\ 
    & \searrow & & \swarrow & \\
    &          & X\times S & & 
\end{array}
$$
contracting all the components over which $M$ is trivial.  
In some sense, this is a strong version
of the Kawamata
basepoint freeness theorem relative to $f$.  

\begin{rema}
A more direct argument can be made when $X$ is projective \cite[\S 3]{BM}.  
Choose an ample divisor $H$ on $X$ and consider
$$M=\omega_{\pi}(p_1+\cdots+p_n) \otimes f^*\cO_X(3H).$$
Now $M$ generally has base points, e.g., along components
$D \subset \cC_t$ contracted by $f$ that are isomorphic to $\bP^1$ 
and have just one distinguished point (marked point or node).  
Nevertheless, one can show there is still a morphism
$$\beta:\cC \ra \cC':=\Proj(\oplus_{N\ge 0} \pi_*M^N).$$
\end{rema}

\paragraph{Key literature}
The concept of stable maps goes back to Kontsevich \cite{K,KM}.
A good algebro-geometric introduction can be found in \cite{FP};  
this survey gives a complete construction when the target space is projective.
A stack-theoretic discussion of stable maps can be found in \cite{AV}:
\cite[\S 5]{AV} addresses existence of the moduli space
of stable maps into a projective variety;
\cite[\S 6]{AV} discusses
properness and projectivity of the coarse moduli space;
the generalization to the case of maps into a proper scheme or algebraic
space can be found in \cite[8.4]{AV}.  
An analysis of the stabilization functor---with
applications to functoriality of stable maps---can be found in 
\cite[\S 3]{BM}.  
Explicit constructions of stabilization morphisms in related contexts can be found in \cite{KnMu}
and \cite[\S 3.1,4.1]{Ha};  these references prove the relative global generation and vanishing asserted above.

\bibliographystyle{alpha}
\bibliography{Strasbourg}

\noindent \textbf{Department of Mathematics, Rice University, Houston, Texas 77005,
USA,} 
\texttt{hassett@rice.edu}

\end{document}